\pgfplotsset{compat=newest}
\pgfplotsset{compat=1.18} 
\newcommand{\ccc}[1]{{{\color{black}{#1}}}}
\newcommand{\lb}[1]{{\color{black} #1}}
\newcommand{\cc}[1]{{\color{black} #1}}
\newcommand{\mbs}[1]{{\color{black} #1}}
\crefname{hypothesis}{Hypothesis}{Hypotheses}
\title{Bigraded Castelnuovo-Mumford Regularity and Gr\"obner bases}
\author{Mat\'ias Bender\thanks{Inria \& CMAP, CNRS, \'Ecole Polytechnique, Institut polytechnique de Paris, Palaiseau, France 
(\email{matias.bender@inria.fr}).}
\and Laurent Bus\'e\thanks{Universit\'e Cote d'Azur, Inria, Sophia Antipolis, France. 
  (\email{laurent.buse@inria.fr}).}
\and Carles Checa\thanks{National Kapodistrian University of Athens \& Athena RC, Greece
  (\email{ccheca@di.uoa.gr}).} \and Elias Tsigaridas\thanks{Inria Paris \& Institut de Math\'ematiques de Jussieu - Paris Rive Gauche, Sorbonne Universit\'e and Paris Universit\'e, France.
(\email{elias.tsigaridas@inria.fr})}
}
\DeclareMathOperator{\DRL}{DRL}
\DeclareMathOperator{\reg}{reg}
\DeclareMathOperator{\xreg}{x-reg}
\DeclareMathOperator{\yreg}{y-reg}
\DeclareMathOperator{\xtor}{\mathfrak{R}_x}
\DeclareMathOperator{\ytor}{\mathfrak{R}_y}
\DeclareMathOperator{\ini}{in}
\DeclareMathOperator{\bigin}{bigin}
\DeclareMathOperator{\multigin}{multigin}
\DeclareMathOperator{\supp}{Supp}
\DeclareMathOperator{\sat}{sat}
\DeclareMathOperator{\HF}{HF}
\DeclareMathOperator{\gin}{gin}
\DeclareMathOperator{\GL}{GL}
\DeclareMathOperator{\cd}{cd}
\DeclareMathOperator{\xsat}{{xsat}}
\DeclareMathOperator{\mx}{\mathfrak{m}_x}
\DeclareMathOperator{\my}{\mathfrak{m}_y}
\DeclareMathOperator{\mb}{\mathfrak{b}}
\definecolor{almond}{rgb}{0.94, 0.87, 0.8}
 \definecolor{aliceblue}{rgb}{0.94, 0.97, 1.0}
\definecolor{airforceblue}{rgb}{0.36, 0.54, 0.66}
\newcommand{\ideal}[1]{( #1 )\xspace}
\newcommand\underrel[2]{\mathrel{\mathop{#2}\limits_{#1}}}
  \def\zzz#1 #2\relax{%
\expandafter\def\csname pgfk@/tikz/external/system call\endcsname{etex '&mylatexformat' #2}%
}
\begin{document}

\maketitle


\begin{abstract}
We study the relation between the bigraded Castelnuovo-Mumford regularity of a bihomogeneous ideal $I$ in the coordinate ring of the product of two projective spaces and the \lb{bi}degrees of a Gr\"obner basis of $I$ with respect to the degree reverse lexicographical monomial order in generic coordinates.
For the single-graded case, Bayer and Stillman unraveled all aspects of this relationship forty years ago and these results led to complexity estimates for computations with Gr\"obner bases. 
We build on this work to introduce a bounding region of the bidegrees of minimal generators of bihomogeneous Gr\"obner bases for $I$. We also use this region to certify the presence of some minimal generators close to its boundary.
Finally, we show that, up to a certain shift, this region is related to the bigraded Castelnuovo-Mumford regularity of $I$.

\end{abstract}

\maketitle





 






\section{Introduction}

Given a homogeneous polynomial ideal,
we can answer 
most, if not all, of the (algorithmic or mathematical)
problems related to it
by exploiting Gr\"obner bases computations~\cite{buchberger}.
Such problems include, but are not limited to,
testing the membership of a polynomial in the ideal,
performing boolean operations,
computing its degree and dimension,
eliminating variables and computing elimination ideals or computing the  minimal free resolution.
Consequently, developing algorithms to compute Gr\"obner bases \cite{buchberger,f4,f5} and obtaining precise complexity estimates of their running times, as well as precise bounds on the degrees of the polynomials they manipulate, are central problems in computational algebraic geometry.

\subsection{The classical case} Let $I$ be a homogeneous ideal in a standard $\mathbb{Z}$-graded polynomial ring.
The monomial ideal $\ini(I)$  is the
\emph{initial ideal} of $I$, that is 
$$\ini(I) = (\ini(f) \ : \ f \in I) , $$
where the \textit{initial term} $\ini(f)$ is the leading term of the polynomial $f$ with respect to a fixed monomial order.
Then, roughly speaking, a Gr\"obner basis is a set of generators of $I$, the initial terms of
which generate the initial ideal; \lb{see for instance \cite[Chapter 15]{eisenbud1995}}.

In both theory \cite{whatcanbecomputed} and applications, for example in cryptography \cite{FJ-hfe-03,CamGol-invariant-2020}
and discrete optimization~\cite{DHK-dopt-book-12}, 
a common complexity parameter of related Gr"obner bases computations is the maximal degree of the generators of $\ini(I)$; it serves as an effective measure on the number of arithmetic operations that algorithms used to compute Gr\"obner bases need to perform.
It is known, even before the 
systematic algorithmic study of problems in algebraic geometry, that, in the worst case, 
we might need to operate with polynomials having double exponential degree, with respect to the number of variables, to perform various operations involving polynomial ideals~\cite{Hermann2019,MAYR1982305,whatcanbecomputed}. 
Instead of always relying on these pessimistic worst case bounds, a more refined analysis relates the maximal degrees of the minimal generators of $I$, 
and consequently the complexity of computing a Gr\"obner basis, 
with a central invariant coming from commutative algebra: the Castelnuovo-Mumford regularity \cite{bayer_criterion_1987, mumford}. In this way, we obtain a more detailed picture of the required operations and the difficulty of the corresponding algebraic problem.

Given a positive integer $m \in \mathbb{N}$, the ideal $I$ is called $m$-regular if $m + j$ upper bounds the  degrees of its $j$-th syzygies and, therefore, the \lb{$j$-th} Betti numbers of $I$. The Castelnuovo-Mumford regularity $\reg(I)$ is the minimal $m$ such that $I$ is $m$-regular. 
From the commutative algebra point of view, this invariant reflects many interesting properties of $I$. For instance, Eisenbud and Goto \cite{eisenbudgoto} use it to bound the degrees at which the graded pieces of the local cohomology modules $H^i_{\mathfrak{m}}(I)$ vanish, where $\mathfrak{m}$ is the ideal generated by all the variables; we refer to \cite{24hours} for a description of local cohomology. 
In addition, the degrees $m \in \mathbb{N}$ at which $I$ is $m$-regular coincide with the degrees at which the truncated ideals $I_{\geq m}$  have a linear resolution, meaning that all the maps appearing in a minimal free resolution are linear maps; see Definition~\ref{truncatedideals}.

As the regularity of an ideal $I$ bounds the degrees of \lb{its} syzygies, which in turn lead to a free resolution of $I$, it must also bound the degrees of a minimal set of generators of $I$.
If we apply this observation to the initial ideal, using any monomial order, then we deduce that
\begin{equation}
\label{firstbound}
    \max \{\text{degrees of generators of a minimal Gr\"obner basis} \}\leq \reg(\ini(I)).
\end{equation}
However, we cannot a priori know when this bound is tight. Moreover, it relies on the regularity of the initial ideal instead of the regularity of the ideal itself, which encodes \lb{some} algebraic and geometric properties of $I$. A relation between the regularity of an ideal $I$ and its initial ideal arises by noticing the upper semi-continuous behaviour of local cohomology under flat families (see \cite[Theorem 12.8]{hartshorne}), that is
\begin{equation}
\label{equationgap}
\reg(I) \leq \reg(\ini(I)).
\end{equation}
The previous two inequalities do not allow us to deduce any relation between the maximal degree of an element in a Gr\"obner basis and the Castelnuovo-Mumford regularity of the ideal. However, Bayer and Stillman \cite{bayer_criterion_1987} proved that, under additional assumptions, Eq.~\eqref{firstbound} and Eq.~\eqref{equationgap} are equalities, that is,
\begin{equation}
\label{eq:homo-eq-dgGB-reg}
    \max \{\text{degs. of elements in a minimal Gr\"obner basis} \} = \reg(\ini(I)) = \reg(I).
\end{equation}

The equalities hold under two assumptions:
\begin{itemize}
    \item The monomial order is the \textit{degree reverse lexicographical} monomial order; see \cite[\S 15.2]{eisenbud1995}.
    \item The ideal is in generic coordinates, that is, its initial ideal equals the \emph{generic initial ideal} $\gin(I)$; see \cite{galligo74}.
\end{itemize} 

\medskip

\noindent About the first assumption, the degree reverse lexicographical order is a common choice for the computation of Gr\"obner bases as, in several cases, it provides better bounds for the degrees of the generators of the Gr\"obner basis \cite{lazard, TRINKS1978475}.
Interestingly, we cannot relax this assumption:
if we use another monomial order, instead of degree reverse lexicographical, then we can always find ideals where the inequalities  Eq.~\eqref{firstbound} and Eq.~\eqref{equationgap} are strict  \cite{converse}.
\lb{The second assumption relies on the} study of ideals in generic coordinates initiated \lb{by} Galligo~\cite{galligo74} who proved the existence of generic initial ideals.
When $I$ is not in generic coordinates, the degrees of the generators of $\ini(I)$ and $\gin(I)$ might be different, either bigger or smaller. However, in some cases, we can replace this genericity assumption on the coordinates by a more \cc{restrictive change} of coordinates~\cite{bermejogimenez, hashemi2012efficient}.


In other words, Bayer and Stillman proved, under the previous two assumptions, that
the Castelnuovo-Mumford regularity gives a meaningful estimate on the complexity of computing Gr\"obner bases. This relation also clarifies the connection between the double exponential bound (in the number of variables) for the regularity \cite{galligo74, giusti, caviglia_sbarra_2005} and the double exponential bounds for the complexity of many problems in computational algebra~\cite{Hermann2019}. For example, for the family of ideals provided by Mayr and Meyer~\cite{MAYR1982305, SWANSON2003137},
the double exponential bounds are nearly optimal
and also settle the worst-case complexity of computing  a Gr\"obner basis.
\medskip

\subsection{Bihomogeneous polynomial systems} Polynomials and polynomial systems coming from applications usually have \lb{certain structures}. Therefore, \lb{instead of relying on general purpose algorithms, it is preferable to take advantage of  such  structures to improve the computational complexity of algorithms, especially for Gr\"obner bases computations.} 
\lb{An} important case of interest is the \lb{multihomogeneous structure, i.e.~polynomial systems defined} \cc{in the coordinate ring of a product of projective spaces}. This case involves polynomials homogeneous in several blocks of variables
and instead of a single degree, a tuple of degrees is assigned to each polynomial, corresponding to the degrees with respect to each of block of variables. \lb{In this paper, we will focus on bihomogeneous polynomial systems, that is to say polynomials that are homogeneous in two blocks of variables, which we will denote by $x$ and $y$.}

\medskip

The extension of Castelnuovo-Mumford regularity to the corresponding multihomogeneous ideals has attracted the interest of many researchers in the last three decades. The various related results concern 
the (suitable) definition of regularity and its main  properties \cite{bruce,MGS-mgcmr-04}, its connection to multigraded local cohomology modules  \cite{botbol2012castelnuovo, chardinholanda},
its relation with Betti numbers and virtual resolutions \cite{aramideh2021computing, berkeshelmansmith}, 
the special properties of ideals defining points and curves \cite{Cobb_2024, hapoints},
bounds on (degree) regions that extend previous results from the classical single graded case \cite{bruce2022bounds, boundsmaclagansmith, salatmolto}, 
and, of course, the (efficient) computation of Gr\"obner bases \cite{Bender_2018,FAUGERE2011406}.



In \cite{MGS-mgcmr-04}, Maclagan and Smith \lb{introduced a generalization of the Castelnuovo-Mumford regularity to the case of toric varieties, which encapsulates the bihomogeneous setting we are considering in this paper. In this latter case,  
the Castelnuovo-Mumford regularity of a bihomogeneous ideal is} a region of bidegrees, also denoted by $\reg(I) \subset \mathbb{Z}^2$ (Definition~\ref{maclagansmithpreelim}), corresponding to the vanishing of certain local cohomology modules with respect to the irrelevant ideal $\mb$ (the intersection of the ideals generated by each block of variables). They proved that this definition preserves some of the most relevant geometric properties of the classical Castelnuovo-Mumford regularity in the single graded case, in particular, it provides a bounding region for the bidegrees of the generators of any bigraded ideal (\cite[Theorem 1.3]{MGS-mgcmr-04}).

\medskip

\lb{The extension of the properties of generic initial ideals from the homogeneous to the bihomogeneous (actually multihomogeneous\footnote{\cc{We notice that, as explained in \cite[Example 4.11]{MGS-mgcmr-04}, if the multigrading is not standard, generic initial ideals need not exist.}}) setting has also attracted interest and they are several existing results in this direction \cite{ aramova2000bigeneric, conca2016CS, roemer} (see also Section \ref{sec::1})}. However, the connection between the Castelnuovo-Mumford regularity of a bihomogeneous ideals and the bidegrees of the generators of generic initial ideals \lb{is still an open problem}. \lb{More specifically, it is a natural question to understand to which extent the Bayer and Stillman criterion established in the single graded setting can be extended to the bigraded setting. This question, which is the main motivation of this paper, amounts to} unravel the relation between the Castelnuovo-Mumford regularity of a bihomogeneous ideal (and other related invariants) and the bidegrees of the minimal generators of its \textit{bigeneric initial ideal} (see Definition \ref{bigeneric}). This translates into characterizing the relation between this regularity and 
the bidegrees of the minimal generators of the degree reverse lexicographical Gr\"obner basis, after a generic change of coordinates that preserves the bigraded structure. \lb{We notice that unlike in the single graded setting}, the bidegrees of the generators of the bigeneric initial ideal depend on the choice of the relative order of the variables of different bidegrees (see Example \ref{ex:2-diff-orders}).


\subsection{Previous works} 
Aramova, Crona, and De Negri \cite{aramova2000bigeneric} and R\"omer \cite{roemer} studied the relation between the regularity \lb{of a bihomogeneous ideal $I$} and the bidegrees of the generators of \lb{its bigeneric initial ideal, denoted by $\bigin(I)$}, assuming that the block of the $x$ variables is smaller than the block of the $y$ variables.
\lb{For that purpose,} they introduced a notion of regularity, which we denote by $\xtor(I)$. \lb{It is an integer which is} defined in terms of the Betti numbers of $I$ with respect to one of the groups of variables \lb{(see Definition~\ref{roemerdefiniiton} for more details)}.
%
\lb{In  \cite{aramova2000bigeneric}, 
Aramova, Crona, and De Negri proved that the maximal degree with respect to the variables $x$ (resp. $y$) of a minimal generator of $\bigin(I)$ is equal to $\xtor(\bigin(I))$ (resp. $\ytor(\bigin(I))$) (see \cite[Theorem 2.2]{aramova2000bigeneric})}. Using \lb{the above-mentioned assumptions} on the monomial order, in \cite{roemer} R\"omer \lb{proved} that
$$\xtor(I) = \xtor(\bigin(I)).$$
\lb{Thus}, the description \lb{with respect to} the $x$ block of variables depends solely on the Betti numbers of $I$ (see \cite[Proposition 4.2]{roemer}). However, the same result does not hold for $\ytor(I)$ \lb{without changing} the monomial order. \lb{We emphasize that,} as $\xtor(I)$ is a single positive integer, it provides information on the maximal degree of a minimal generator of $\bigin(I)$ with respect to only one block of variables.  
\mbs{ Example~\ref{exampleintro} illustrates how $\xtor(I)$ (in purple) bounds the bidegrees of the minimal generators of $\bigin(I)$ and how loose this bounding region can be.} 

\subsection{Contributions}
\lb{This paper deals with the problem of determining regions in $\mathbb{Z}^2$ containing all bidegrees that are involved in the  
computation of a Gr\"obner basis of a bihomogeneous ideal $I$. Such a region is expected to be described in terms of algebraic invariants of $I$, such as local cohomology modules or Betti numbers.}

\lb{As a first contribution, we show that the Castelnuovo-Mumford of $I$ or $\bigin(I)$, which are the natural candidates to construct such regions, does not have the expected properties as in the single graded setting. For that purpose,     
we focus on the specific setting of ideals defining empty subschemes of $\mathbb{P}^n \times \mathbb{P}^m$ for which 
we establish a natural link between $\reg(I)$ and $\bigin(I)$ (Corollary~\ref{emptyimplication}).}
\mbs{This special case allows us to demonstrate how much more complicated the bigraded case can be than the single graded one. Concretely, in Section~\ref{sec:first_examples}}
we illustrate with examples that there may be bidegrees outside $\reg(I)$ for which there are no elements of the Gr\"obner basis of strictly bigger bidegrees. Moreover, there may be bidegrees in $\reg(I)$ such that there are generators of $\bigin(I)$ of strictly higher bidegrees; see Example~\ref{examplemat2} and Example~\ref{example3}. Furthermore, Example \ref{example3} illustrates that, in contrast to the classical case, $\reg(I)$ is not preserved by considering $\bigin(I)$, meaning that, in general, it is not enough to study $\reg(I)$ in order to understand $\reg(\bigin(I))$; see Remark \ref{rem:reg_and_bigin}.


\lb{The second and main contribution of this paper is the definition of a novel  region attached to a bihomogeneous ideal $I$, which we will denote by $\xreg(I)$. This region allows us to obtain novel bounding regions for the bidegrees of the generators of the $\bigin(I)$. Inspired by the works of Botbol, Chardin, and Holanda~\cite{botbol2012castelnuovo, chardinholanda}, it is defined by means of the vanishing of some graded components of local cohomology modules with respect to one block of variables.}


\begin{definition}
\label{def:partial-regularity-intro}
Let $I \subset \ccc{\mathbf{k}[x_0,\dots,x_{n},y_0,\dots,y_{m}]}$ be a bihomogeneous ideal. 
The \textit{partial regularity region}, $\xreg(I)$,
is a region of bidegrees $(a,b) \in \mathbb{Z}^2$, such that for all $i \geq 1$ and $(a',b') \geq (a - i + 1, b)$:
$$ H_{\mx}^i(I)_{(a',b')} = 0 , $$ 
where $\mx$ is the ideal generated by the $x$ block of variables.
\end{definition}
In Theorem~\ref{theoremcriterion}, we show that this region can be characterized in terms of a criterion that can be seen as an extension of Bayer and Stillman's criterion \cite[Theorem 1.10]{bayer_criterion_1987}. Then, \lb{assuming that the monomial order is a degree reverse lexicographical order} with the $x$ variables being smaller than the $y$ variables, 
we prove that (see \ccc{Theorem \ref{main-theo-sec-4}})
$$\xreg(I) = \xreg(\bigin(I)).$$ 
\lb{Moreover}, we show that we can use $\xreg(I)$ to deduce that there are no generators of $\bigin(I)$ of certain bidegrees (Theorem~\ref{theoremxreg}). \lb{Actually}, $\xreg(I)$ allows us to identify some regions where there must exist generators of $\bigin(I)$ (Theorem~\ref{theoremexistenceelementsxreg}). Finally, \lb{relying on} results of Chardin and Holanda~\cite{chardinholanda}, we provide links between $\xreg(I)$, the bigraded Castelnuovo-Mumford regularity (Theorem~\ref{bigtheoremregularity}) and Betti numbers (Proposition~\ref{theorembetti}). 
This implies, that, up to a shift in the degrees of the $x$ variables, we can also use $\reg(I)$ to provide a bounding region to the bidegrees of the generators of $\bigin(I)$.
\medskip

To summarize, our work is a step towards finding effective sharp bounding regions of the bidegrees of the polynomials \lb{appearing during} the computation of a bigraded Gr\"obner basis (and the study of the complexity of the related algorithms). These regions are obtained by means of the partial regularity $\xreg(I)$ which, up to a fixed order of the blocks of variables, is invariant \lb{when passing to bigeneric initial ideals}. However, there may be unbounded regions of bidegrees not intersecting $\xreg(I)$. These regions consist of bidegrees whose first component can only attain a finite number of values (see the region in blue in Figure \ref{fig:examplebruceroemerintro}).

The following example
illustrates our contribution.

\begin{example}
\label{exampleintro}
    This example is an adaptation of \cite[Example 4.3]{bruce} (similarly \cite[Example 1.4]{berkeshelmansmith}) and corresponds to a smooth hyperelliptic curve of genus $8$ embedded in $\mathbb{P}^2 \times \mathbb{P}^1$.  Consider the standard $\mathbb{Z}^2$-graded ring $\mathbb{C}[x_0,x_1,x_2,y_0,y_1]$ and the ideal
$$J = (y_0^2x_0^2 + y_1^2x_1^2 + y_0y_1x_2^2, y_0^3x_2 + y_1^3(x_0 + x_1)).$$ 
Let $I = J^{\sat} = (J:\mathfrak{b}^{\infty})$ be the saturation of $J$ with respect to the irrelevant ideal $\mathfrak{b} = (x_0y_0, x_0y_1, x_1y_0, x_1y_1, x_2y_0, x_2y_1)$ of $\mathbb{P}^2 \times \mathbb{P}^1$. In Figure \ref{fig:examplebruceroemerintro}, we have drawn the degrees of the generators of $I$ and of $\bigin(I)$, assuming a choice of monomial order such that the block of $x$ variables is lower than the block of $y$ variables. The integer $\mathfrak{R}_x(I)$ yields a tight bound for the degrees of the generators of $\bigin(I)$ with respect to the degree of the $x$'s. The partial regularity region $\xreg(I)$ provides a finer description of the bidegrees of the generators of $\bigin(I)$.
\begin{figure}[h]
\centering
\begin{tikzpicture}[thick,scale=0.5, every node/.style=transform shape]

\filldraw[green] (1,3) circle (3pt) node[anchor=north] {};
\filldraw[green] (2,2) circle (3pt) node[anchor=north] {};
\filldraw[green] (3,2) circle (3pt) node[anchor=north] {};
\filldraw[green] (5,1) circle (3pt) node[anchor=north] {};
\filldraw[green] (8,0) circle (3pt) node[anchor=north] {};

\filldraw[black] (6,1) circle (3pt) node[anchor=north] {};

\filldraw[black] (2,3) circle (3pt) node[anchor=north] {};

\filldraw[black] (2,4) circle (3pt) node[anchor=north] {};
\filldraw[black] (2,5) circle (3pt) node[anchor=north] {};
\filldraw[black] (2,6) circle (3pt) node[anchor=north] {};
\filldraw[black] (2,7) circle (3pt) node[anchor=north] {};
\filldraw[black] (2,8) circle (3pt) node[anchor=north] {};

\filldraw[red] (4,4) node[anchor=north] {};
\filldraw[red] (4,5) node[anchor=north] {};
\filldraw[red] (4,6) node[anchor=north] {};
\filldraw[black] (5,1) node[anchor=north] {\color{red} };
\filldraw[white] (5,4) circle (2pt)  node[anchor=north] {\color{red} };
\filldraw[red] (5,5) node[anchor=north] {};
\filldraw[red] (5,6) node[anchor=north] {};
\filldraw[black] (6,1)node[anchor=north] {\color{red} };
\filldraw[red] (6,2) node[anchor=north] {};
\filldraw[red] (6,3) node[anchor=north] {};
\filldraw[red] (6,4) node[anchor=north] {};
\filldraw[red] (6,5) node[anchor=north] {};

\draw[thick,->] (0,0) -- (9.5,0) node[anchor=north west] {};

\draw[line width = 0.5mm,opacity=0.75,color=brown,-] (9.5,0) -- (9,0.) -- (9,1) -- (7,1) -- (7,2.) -- (4,2) -- (4,4) -- (3,4.0) -- (3.05,8.51) node[anchor=south east] {};

\fill[color=brown, opacity=0.2] (3,4) rectangle (9.5,8.5) ;

\fill[color=brown, opacity=0.2] (4,2) rectangle (9.5,4) ;

\fill[color=brown, opacity=0.2] (7,1) rectangle (9.5,2) ;

\fill[color=brown, opacity=0.2] (9,0) rectangle (9.5,1) ;

\draw[thick,->] (0,0) -- (0,8.5) node[anchor=south east] {};
\foreach \x in {,1,2,3,4,5,6,7,8,9}
 \draw (\x,1pt) -- (\x,-1pt) node[anchor=north] {$\x$};

 \draw[thick,color=cyan,-] (2.5,8.5) -- (2.5,1.5) -- (1.5,1.5) -- (1.5,8.5) node[anchor=south east] {};

  \draw[thick,color=cyan,-] (2.5,3.5) -- (2.5,1.5) -- (3.5,1.5) -- (3.5,3.5) -- (2.5,3.5) node[anchor=south east] {};

 \draw[thick,color=cyan,-] (5.5,0.5) -- (5.5,1.5) -- (6.5,1.5) -- (6.5,0.5) -- (5.5,0.5) node[anchor=south east] {};

  \draw[thick,color=cyan,-] (7.5,0.5) -- (7.5,-0.5) -- (8.5,-0.5) -- (8.5,0.5) -- (7.5,0.5) node[anchor=south east] {};

\foreach \y in {0,1,2,3,4,5,6,7,8}
   \draw (1pt,\y cm) -- (-1pt,\y cm) node[anchor=east] {$\y$};

  \draw[thick,color=purple,-] (7.5,8.5) -- (7.5,-0.5) -- (8.5,-0.5) -- (8.5,8.5) node[anchor=south east] {};



\end{tikzpicture}
        \caption{The \textcolor{green}{green dots} represent the degrees of the generators of $I$ and the black dots represent the degrees of the generators of the bigeneric initial ideals. In this example $\xtor(I) = 8$ (in \textcolor{purple}{purple}). In the region (in \textcolor{brown}{brown}), which \ccc{corresponds to a shift of $\xreg(I)$},  we can certify that there are no minimal generators of $\bigin(I)$ of those bidegrees. Moreover, $\xreg(I)$ can be used to certify the presence of generators of $\bigin(I)$ of the bidegrees marked in \textcolor{blue}{blue}.}
        \label{fig:examplebruceroemerintro}
    \end{figure}
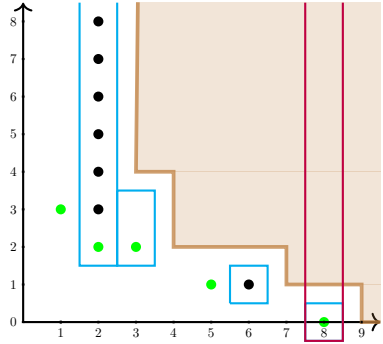
\end{example}




\subsection{Content of the paper} 
In Section \ref{sec::1}, we present all the preliminary definitions and results on Gr\"obner bases, bigeneric initial ideals, local cohomology and multigraded Castelnuovo-Mumford regularity that are needed in this work. In Section \ref{sec:first_examples}, we illustrate the relation between $\reg(I)$ and the generators of $\bigin(I)$ with examples, \lb{focussing on the particular setting of ideals defining empty varieties}. In Section \ref{sec:partial-BS}, we introduce \lb{the partial regularity regions} $\xreg(I)$ and prove a criterion that \lb{can be seen as an extension of} the Bayer and Stillman criterion to the setting of bigraded ideals. In Section \ref{sec::5}, we derive the consequences of this criterion and use it to provide a description of the bidegrees of a minimal set of generators of $\bigin(I)$, as well as its relation with $\reg(I)$ and the Betti numbers. Finally, in Section \ref{sec:final}, we discuss open problems of our work, such as the tightness of the bounding regions and we discuss the multihomogeneous case. 





 
\medskip
 
\paragraph{Acknowledgments} This project has received funding from the European Union's Horizon 2020 research and innovation programme  under the Marie Sk\l{}odowska-Curie grant agreement N. 860843 and a public grant from the
Fondation Math\'ematique Jacques Hadamard. We are very grateful to Marc Chardin for interesting discussions and to the anonymous reviewers whose comments contributed to improve this paper.

\section{Preliminaries} 
\label{sec::1} 

We present the notation, definitions and results that we will need in the sequel.

\paragraph{Notation} Throughout the paper, we will use the following notation:

\begin{itemize}
    \item[-] Given two pairs $(a,b), (a',b') \in \mathbb{Z}^2$, we will write $(a,b) \geq (a',b')$ if $a \geq a'$ and $b \geq b'$.
    \item[-] We will write $(a,b) \gneq (a',b')$  if $(a,b) \geq (a',b')$ and $(a,b) \neq (a',b')$.
\end{itemize}


\subsection{Bihomogeneous ideals and bigeneric initial ideals}

Let $\mathbf{k}$ be a field of characteristic $0$. Let $S = \mathbf{k}[x_0,\dots,x_{n},y_0,\dots,y_{m}]$ be a ring with a \cc{standard} $\mathbb{Z}^2$-grading, such that $\deg(x_i) = (1,0)$ and $\deg(y_j) = (0,1)$. We write the monomials in $S$ as $x^{\alpha}y^{\beta} = x_0^{\alpha_0}\cdots x_n^{\alpha_n}y_0^{\beta_0}\cdots y_m^{\beta_m}$ for a vector $(\alpha,\beta) \in \mathbb{Z}^{n + 1} \times \mathbb{Z}^{m+1}$. A monomial $x^{\alpha}y^{\beta}$ has bidegree $(a,b)$ if $\sum_{i = 0}^n\alpha_i = a$ and $\sum_{j = 0}^m\beta_j = b$.

Let $\mx$ (resp. $\my$) be the ideal generated by the $x$ (resp. $y$) variables. We denote by 
$\mathbb{P}^{n} \times \mathbb{P}^{m}$ the biprojective space whose homogeneous coordinate ring is $S$ and by $\mathfrak{b} = \mx\my$ the irrelevant ideal. The main algebraic objects that we  manipulate are bihomogeneous polynomials and bihomogeneous ideals.
\begin{definition}
\label{def:bihomo-ideal}
    A polynomial $f = \sum_{\alpha,\beta}c_{\alpha,\beta}x^{\alpha}y^{\beta} \in S$ is bihomogeneous of bidegree $(a,b) \in \mathbb{Z}^2$ if all of its terms are monomials of bidegree $(a,b)$. An ideal $I \subset S$ is bihomogeneous if it can be generated by bihomogeneous polynomials. The bigraded part of bidegree $(a,b)$ of $I$ is the $\mathbf{k}$-vector space generated by all the polynomials of bidegree $(a,b)$ in $I$; it is denoted by $I_{(a,b)}$.
\end{definition}

In the rest of the paper, \textit{a linear $x$-form} is an element in $S_{(1,0)}$.

\begin{definition}
\label{def:DRL}
    Consider a degree reverse lexicographical monomial order $<$ (or $\DRL$) such that:
\begin{equation}
\label{eqmono}
    x_0 < \dots < x_n < y_0 \dots < y_m.
\end{equation}
 For two monomials $x^{\alpha}y^{\beta}$ and $ x^{\alpha'}y^{\beta'}$ of the same bidegree $(a,b) \in \mathbb{Z}^2$, the degree reverse lexicographical order satisfies the property that
$$ x^{\alpha}y^{\beta} < x^{\alpha'}y^{\beta'}  \iff \text{ the leftmost non-zero entry of } (\alpha'-\alpha, \beta' - \beta)\text{ is negative.}$$
Here, the leftmost non-zero entry refers to the vector $$(\alpha,\beta) = (\alpha_0,\dots,\alpha_n,\beta_0,\dots,\beta_m)$$ from left to right. 
\end{definition}

\begin{remark}
    We notice that there are many different
    degree reverse lexicographical monomial orders 
    corresponding to the various permutations of the variables in Eq.~\eqref{eqmono}.
    In our case, it is important to consider
    the variables in $x$ and the variables in $y$ and the relative order of the two blocks.
    This is so, because we will consider linear change of coordinates with respect to the two blocks and as a consequence, only the relative order of the two blocks matter for the structure of the generators of the monomial ideals that we consider; see, for instance, Example~\ref{ex:2-diff-orders}.
\end{remark}

One of the main properties of the degree reverse lexicographical monomial order is that if $x_0$ divides a monomial $x^{\alpha}y^{\beta}$, then it also divides every monomial $x^{\alpha'}y^{\beta'} < x^{\alpha}y^{\beta}$. In particular,
\begin{equation}
\label{drl}
    x_0\text{ divides }\ini(f) \implies x_0 \text{ divides }f.
\end{equation}

In what follows, except if we explicitly state otherwise, we will only use the $\DRL$ monomial order in Eq.~\eqref{eqmono}.

\begin{definition}
\label{def:ini-f}
Let $<$ be the degree reverse lexicographical monomial order defined previously.
Consider $f = \sum_{\alpha,\beta}c_{\alpha,\beta}x^{\alpha}y^{\beta} \in S$.
The \emph{initial monomial} (also known as leading monomial) of $f$
is the largest monomial with non-zero coefficient appearing in $f$
with respect to the monomial order $<$;
we denote it by $\ini(f)$.

Given an ideal $I \subseteq S$, we define its \emph{initial ideal}, $\ini(I)$, with respect to the monomial order $<$, as the ideal generated by the initial monomials of the polynomials in $I$; that is $\ini(I) = \ideal{ \ini(f) \,|\, f \in I }$.
\end{definition}

The definition of Gr\"obner bases for bihomogeneous ideals is as follows.

\begin{definition}
\label{def:bihomo-GB}
    A (bihomogeneous) \emph{Gr\"obner basis}, $G$, of a bihomogeneous ideal $I \subset S$ is a set of bihomogeneous polynomials  such that ${\ini(I)} = \ideal{  \ini(g) \,|\, g \in G }$.
    
    The Gr\"obner basis is minimal if the initial monomials of
    any proper subset of $G$ do not generate $\ini(I)$.
\end{definition}


Let $u \in \GL(n + 1) \times \GL(m + 1)$ be a block-diagonal matrix with entries in $\mathbf{k}$ and nonzero determinant and denote by $u^x$ and $u^y$ its two canonical blocks. This matrix defines a linear change of coordinates in $S$ as:
\begin{equation}
    u = (u^x,u^y): S \xrightarrow[]{} S \quad x_i \xrightarrow[]{} u^x_{i0}x_0 + \dots + u^x_{in}x_n \quad y_j \xrightarrow[]{} u^y_{j0}y_0 + \dots + u^y_{jm}y_m.
\end{equation}

For each polynomial $f \in S$, we define the polynomial $u \circ f$ as $f(u(x,y))$, which has the same bidegree as $f$. For any bihomogeneous ideal $I \subset S$ and any $u \in \GL(n + 1) \times \GL(m + 1)$, we  define the ideal $u \circ I = \ideal{ u \circ f \, |\, f \in I }$.

\begin{lemma}[Bigeneric initial ideal {\cite[Section~1]{aramova2000bigeneric}}]\label{bigeneric}
     For every \ccc{bihomogeneous ideal} $I \subset S$, there exists a Zariski open subset $U \subset \GL(n+1) \times \GL(m+1)$ 
    and a monomial ideal $\bigin(I)$ such that for any $u \in U$, $\ini(u \circ I) = \bigin(I)$. We call $\bigin(I)$ the bigeneric initial ideal of $I$. 
\end{lemma}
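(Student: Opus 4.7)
The plan is to adapt Galligo's classical proof of the existence of generic initial ideals to the bigraded setting. The group acting on $S$ is $G := \GL(n+1) \times \GL(m+1)$, which contains the product Borel subgroup $B$ of pairs of upper-triangular matrices. My strategy is to work bidegree by bidegree and then assemble.

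Fix a bidegree $(a,b) \in \mathbb{Z}_{\geq 0}^2$. Since $u \circ I$ has the same bigraded Hilbert function as $I$, the dimension $d := \dim_{\mathbf{k}} I_{(a,b)}$ is independent of $u$, so $\ini(u \circ I)_{(a,b)}$ is always the $\mathbf{k}$-span of exactly $d$ monomials of bidegree $(a,b)$. Only finitely many $d$-element subsets $V$ of such monomials exist, and for each of them the locus
\[ U_V^{(a,b)} := \{ u \in G \mid \ini(u \circ I)_{(a,b)} = \mathbf{k}\langle V \rangle \} \]
is locally closed in $G$: it is described by the vanishing and non-vanishing of certain minors in the entries of $u$, namely those controlling which $d$ monomials appear as leading terms of an echelon basis of $(u \circ I)_{(a,b)}$. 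Ranking the $V$'s by a total order (for instance, by lex-comparing their sorted sequences of monomials), upper semi-continuity singles out a unique Zariski open dense stratum $U^{(a,b)} \subset G$, on which the initial vector space has a fixed monomial basis $J_{(a,b)}$.

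Next I would show that the monomial subspaces $J_{(a,b)}$ are compatible across bidegrees, i.e.\ $S_{(a',b')} \cdot J_{(a,b)} \subseteq J_{(a+a',b+b')}$, by observing that for any $u$ in the still-dense open intersection $U^{(a,b)} \cap U^{(a+a',b+b')}$, both sides are determined by the single ideal $\ini(u \circ I)$. Consequently, $\bigin(I) := \bigoplus_{(a,b)} \mathbf{k} \langle J_{(a,b)} \rangle$ is a well-defined bihomogeneous monomial ideal. Noetherianity of $S$ then ensures that it is generated by finitely many monomials living in finitely many bidegrees $(a_1,b_1), \dots, (a_k,b_k)$, so the intersection $U := U^{(a_1,b_1)} \cap \cdots \cap U^{(a_k,b_k)}$ is a nonempty Zariski open subset of $G$ on which $\ini(u \circ I) = \bigin(I)$.

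The main obstacle I anticipate is the rigorous justification that each stratum $U_V^{(a,b)}$ is locally closed in $G$ and that the maximal stratum is open and dense. A direct approach goes through a Pl\"ucker-coordinate analysis of which monomials can serve as leading terms after a block-diagonal change of basis; one must carefully track that the relative order of the $x$- and $y$-blocks in \eqref{eqmono} is compatible with the product group structure (and, as noted in the remark following Definition \ref{def:DRL}, this relative order genuinely matters). A cleaner alternative is to first establish that the maximal $J_{(a,b)}$ must be invariant under the product Borel $B$, by exploiting the connectedness of $B$ together with the uniqueness of the open stratum, and then to combine this with the finiteness of the set of $B$-fixed monomial subspaces with prescribed Hilbert function in each bidegree.
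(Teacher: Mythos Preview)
The paper does not supply its own proof of this lemma; it is stated with a citation to \cite[Section~1]{aramova2000bigeneric} and treated as a known result. Your outline is the standard Galligo-type argument (semi-continuity in each bidegree, compatibility across bidegrees via density of finite intersections of opens, reduction to finitely many bidegrees by Noetherianity), which is precisely the approach taken in the cited reference and in the single-graded case \cite[\S 15.9]{eisenbud1995}; the sketch is correct, and the obstacle you flag is resolved exactly as you suggest, via the irreducibility of $G$ and the locally closed description of each stratum by vanishing and nonvanishing of Pl\"ucker minors.
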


Bigeneric initial ideals generalize the generic initial ideals of homogeneous polynomials to the case of bihomogeneous (and eventually multihomogeneous) polynomials. Generic initial ideals have many useful properties; see \cite{Green1998}. In particular, the properties that we need are the following.

\begin{lemma}[{{\cite[Section~1]{aramova2000bigeneric}}}]
\label{lemmabiborel}
    For a bigeneric initial ideal $\bigin(I)$ the following hold:
    \begin{itemize}
        \item[-] If $x_ix^{\alpha}y^{\beta} \in \bigin(I)$, then $x_jx^{\alpha}y^{\beta} \in \bigin(I)$ for all $j \in \{i,\dots,n\}$.
        \item[-] If $y_ix^{\alpha}y^{\beta} \in \bigin(I)$, then $y_jx^{\alpha}y^{\beta} \in \bigin(I)$ for all $j \in \{i,\dots,m\}$.
    \end{itemize}
\end{lemma}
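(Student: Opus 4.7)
The plan is to adapt Galligo's classical proof that the generic initial ideal is Borel-fixed, now working block-by-block in the bigraded setting. In the paper's matrix convention and with variable order $x_0 < \cdots < x_n < y_0 < \cdots < y_m$, the relevant Borel subgroup is $B^+ = B_x^+ \times B_y^+ \subset \GL(n+1) \times \GL(m+1)$ of upper-triangular pairs---i.e.~$u^x_{ij} = 0$ for $j < i$, so $u^x \circ x_i = \sum_{j \geq i} u^x_{ij} x_j$ is a combination of variables $\geq x_i$, and analogously for $B_y^+$. The main goal is to establish the set-theoretic invariance $b \circ \bigin(I) = \bigin(I)$ for every $b \in B^+$; both exchange properties in the statement then follow immediately by applying this to elementary upper-triangular matrices.

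The crucial auxiliary ingredient is that the opposite Borel $B^- = B_x^- \times B_y^-$ of lower-triangular pairs preserves leading monomials with respect to $\DRL$. For $b^- \in B^-$ we have $b^- \circ x_i = \sum_{j \leq i} b^-_{ij} x_j$ whose leading term is $b^-_{ii} x_i$ (since $x_i$ is the largest among $x_0, \ldots, x_i$); this extends multiplicatively to all monomials and then to polynomials, giving $\ini(b^- \circ f) = \ini(f)$ for every $f \in S$, and hence $\ini(b^- \circ J) = \ini(J)$ for every bihomogeneous ideal $J$. In particular, the open set $U$ of Lemma~\ref{bigeneric} is stable under left multiplication by $B^-$.

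The $B^+$-invariance of $\bigin(I)$ is then derived by combining two inputs. First, by the $LU$-decomposition applied block-by-block, $B^- \cdot B^+$ is Zariski-dense in $\GL(n+1) \times \GL(m+1)$, so a generic $u \in U$ factors as $u = b^- b^+$ with $b^+$ still generic in $B^+$; the previous paragraph then yields $\bigin(I) = \ini(u \circ I) = \ini(b^+ \circ I)$ for a generic $b^+ \in B^+$. Second, a flat-degeneration argument via a $1$-parameter subgroup $\lambda(t)$ in the diagonal torus of $B^+$ with weights refining $\DRL$ on both variable blocks realizes $\bigin(I)$ as the flat limit $\lim_{t \to 0} \lambda(t) \circ I$; since $\lambda(t)$ normalizes $B^+$ with a well-defined limit (the diagonal part of $B^+$), the $B^+$-action commutes with the degeneration and transfers to $B^+$-invariance of $\bigin(I)$. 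Alternatively, one may appeal to the Borel fixed-point theorem applied to the closure of the $\GL$-orbit of $I$ in the (multigraded) Hilbert scheme.

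The exchange property is then a direct consequence. Given $x_i x^{\alpha} y^{\beta} \in \bigin(I)$ and $j \in \{i, \ldots, n\}$, consider the elementary matrix $b \in B_x^+$ with $b \circ x_i = x_i + c x_j$ (for some $c \in \mathbf{k}^*$) and $b \circ x_k = x_k$ for $k \neq i$; expanding gives $b \circ (x_i x^{\alpha} y^{\beta}) = x_i x^{\alpha} y^{\beta} + c \, x_j x^{\alpha} y^{\beta} \in b \circ \bigin(I) = \bigin(I)$, and since $\bigin(I)$ is a monomial ideal, every monomial appearing in this polynomial lies in it, yielding $x_j x^{\alpha} y^{\beta} \in \bigin(I)$; the $y$-block is handled symmetrically using $B_y^+$. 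The main obstacle will be the flat-degeneration step: although $B_x^+$ and $B_y^+$ act on disjoint variable blocks and hence commute, making the bigraded extension morally straightforward, one must carefully choose the $1$-PS weights so that they refine $\DRL$ both within each block and across blocks (given the relative order $x_n < y_0$ between them), so that the flat limit genuinely recovers $\bigin(I)$.
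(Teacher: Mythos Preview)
The paper does not supply its own proof of this lemma; it is simply quoted from \cite[Section~1]{aramova2000bigeneric}. Your proposal is a sketch of the standard Galligo--Bayer--Stillman argument, carried out block by block, and the overall architecture is sound: establish $B^{+}$-invariance of $\bigin(I)$ from (i) $B^{-}$-invariance of initial ideals for $\DRL$, (ii) density of $B^{-}B^{+}$, and (iii) a one-parameter degeneration, and then read off the exchange properties via elementary transvections. Since there is no proof in the paper to compare against, let me just flag the places where your sketch would need tightening before it is a proof.

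First, the flat limit you write as $\lim_{t\to 0}\lambda(t)\circ I$ should be $\lim_{t\to 0}\lambda(t)\circ(g\circ I)$ for a generic $g$ (equivalently a generic $b^{+}\in B^{+}$); as written it would produce $\ini(I)$, not $\bigin(I)$. Second, the passage ``$\lambda(t)$ normalizes $B^{+}$ with a well-defined limit, so the $B^{+}$-action commutes with the degeneration'' hides a genuine computation: with weights $w_0>w_1>\cdots$ chosen so that the $\DRL$-leading monomial has minimal weight, one has $\lambda(t)^{-1}b\lambda(t)\to d$ (the diagonal part of $b$) as $t\to 0$, and then one must argue that $\lim_{t\to 0}\lambda(t)\bigl(\lambda(t)^{-1}b\lambda(t)\bigr)gI=\bigin(I)$ because $\lambda(t)^{-1}b\lambda(t)\,g$ lands in the open set $U$ for small $t$ (using $dg\in U$, which one arranges by choosing $g$ generically). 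This is exactly the ``main obstacle'' you anticipate, and it does require care; the cleanest way to bypass it is the exterior-algebra/highest-weight argument on each bigraded piece, which you allude to only via the Borel fixed-point theorem. Third, in the final step your expansion $b\circ(x_i x^{\alpha}y^{\beta})=x_i x^{\alpha}y^{\beta}+c\,x_j x^{\alpha}y^{\beta}$ is literally correct only when $x_i\nmid x^{\alpha}$; in general there are higher-order terms in $c$, but in characteristic~$0$ the coefficient of $c$ is a nonzero multiple of $x_j x^{\alpha}y^{\beta}$, so the conclusion still follows since $\bigin(I)$ is a monomial ideal.
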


\begin{remark}
    For any monomial ideal, the above conditions appear under the name of bi-Borel fixed property. If we perform the linear change of coordinates with respect to only one block of variables, then we can recover a monomial ideal with the property of the lemma with respect to this block.
    This statement follows from the proof of the single graded case; see \cite[\dag 15.9]{eisenbud1995}.
\end{remark}

\begin{example}
\label{ex:2-diff-orders}
We consider Example \ref{exampleintro} in Section \ref{sec::1}.
In Figure \ref{fig:differentorder}, we can see the bidegrees of $\bigin(I)$, when we use different relative orders for the two blocks of variables. The left part of Figure \ref{fig:differentorder} depicts the degrees of the generators of $\bigin(I)$ using the monomial order in Eq.~\eqref{eqmono}, where the $x$ variables are the smallest. The right part of the figure shows the degrees of the generators of the bigeneric initial ideal if we reverse the order of the two blocks of variables; where the $y$ variables are the smallest.
    \begin{figure}
        \centering
          \begin{tikzpicture}[thick,scale=0.5, every node/.style={transform shape}]

\filldraw[green] (1,3) circle (3pt) node[anchor=north] {};
\filldraw[green] (2,2) circle (3pt) node[anchor=north] {};
\filldraw[green] (3,2) circle (3pt) node[anchor=north] {};
\filldraw[green] (5,1) circle (3pt) node[anchor=north] {};
\filldraw[green] (8,0) circle (3pt) node[anchor=north] {};

\filldraw[black] (6,1) circle (3pt) node[anchor=north] {};

\filldraw[black] (2,3) circle (3pt) node[anchor=north] {};

\filldraw[black] (2,4) circle (3pt) node[anchor=north] {};
\filldraw[black] (2,5) circle (3pt) node[anchor=north] {};
\filldraw[black] (2,6) circle (3pt) node[anchor=north] {};
\filldraw[black] (2,7) circle (3pt) node[anchor=north] {};
\filldraw[black] (2,8) circle (3pt) node[anchor=north] {};

\filldraw[red] (4,4) node[anchor=north] {};
\filldraw[red] (4,5) node[anchor=north] {};
\filldraw[red] (4,6) node[anchor=north] {};
\filldraw[black] (5,1) node[anchor=north] {\color{red} };
\filldraw[white] (5,3) circle (2pt)  node[anchor=north] {\color{red} };
\filldraw[red] (5,5) node[anchor=north] {};
\filldraw[red] (5,6) node[anchor=north] {};
\filldraw[black] (6,1)node[anchor=north] {\color{red} };
\filldraw[red] (6,2) node[anchor=north] {};
\filldraw[red] (6,3) node[anchor=north] {};
\filldraw[red] (6,4) node[anchor=north] {};
\filldraw[red] (6,5) node[anchor=north] {};

\draw[thick,->] (0,0) -- (8.5,0) node[anchor=north west] {};

\draw[thick,->] (0,0) -- (0,8.5) node[anchor=south east] {};
\foreach \x in {,1,2,3,4,5,6,7,8}
 \draw (\x,1pt) -- (\x,-1pt) node[anchor=north] {$\x$};

\foreach \y in {0,1,2,3,4,5,6,7,8}
   \draw (1pt,\y cm) -- (-1pt,\y cm) node[anchor=east] {$\y$};

\end{tikzpicture}\begin{tikzpicture}[thick,scale=0.5, every node/.style={transform shape}]

\filldraw[green] (1,3) circle (3pt) node[anchor=north] {};
\filldraw[green] (2,2) circle (3pt) node[anchor=north] {};
\filldraw[green] (3,2) circle (3pt) node[anchor=north] {};
\filldraw[green] (5,1) circle (3pt) node[anchor=north] {};
\filldraw[green] (8,0) circle (3pt) node[anchor=north] {};

\filldraw[black] (2,3) circle (3pt) node[anchor=north] {};
\filldraw[black] (5,2) circle (3pt) node[anchor=north] {};
\filldraw[black] (4,2) circle (3pt) node[anchor=north] {};

\filldraw[black] (7,1) circle (3pt) node[anchor=north] {};
\filldraw[black] (6,1) circle (3pt) node[anchor=north] {};

\filldraw[red] (4,4) node[anchor=north] {};
\filldraw[red] (4,5) node[anchor=north] {};
\filldraw[red] (4,6) node[anchor=north] {};
\filldraw[black] (5,1) node[anchor=north] {\color{red} };
\filldraw[white] (5,4) circle (2pt)  node[anchor=north] {\color{red} };
\filldraw[red] (5,5) node[anchor=north] {};
\filldraw[red] (5,6) node[anchor=north] {};
\filldraw[black] (6,1)node[anchor=north] {\color{red} };
\filldraw[red] (6,2) node[anchor=north] {};
\filldraw[red] (6,3) node[anchor=north] {};
\filldraw[red] (6,4) node[anchor=north] {};
\filldraw[red] (6,5) node[anchor=north] {};

\draw[thick,->] (0,0) -- (8.5,0) node[anchor=north west] {};

\draw[thick,->] (0,0) -- (0,8.5) node[anchor=south east] {};
\foreach \x in {,1,2,3,4,5,6,7,8}
 \draw (\x,1pt) -- (\x,-1pt) node[anchor=north] {$\x$};

\foreach \y in {0,1,2,3,4,5,6,7,8}
   \draw (1pt,\y cm) -- (-1pt,\y cm) node[anchor=east] {$\y$};

\end{tikzpicture}
        \caption{The \textcolor{green}{green dots} represent the degrees of the generators of $I$ and the black dots represent the degrees of the generators of the bigeneric initial ideals.}
        \label{fig:differentorder}
    \end{figure}
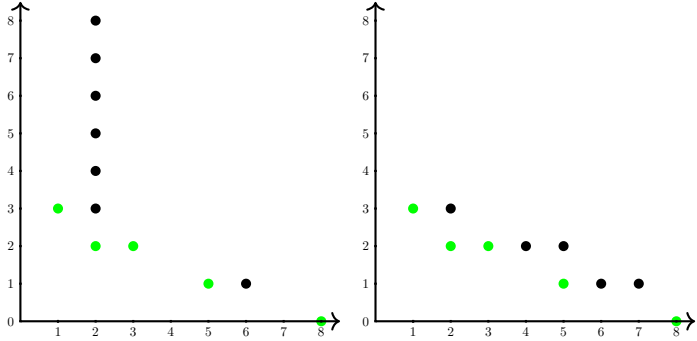
\end{example}


\subsection{Local cohomology modules and supports}

 In what follows, we review some properties of local cohomology and their sets of supports, which are relevant for studying the multigraded Castelnuovo-Mumford regularity.

\begin{notation}
    $H_{J}^i(I)$ will denote the $i$-th local cohomology module of a bihomogeneous ideal $I \subset S$ with respect to another ideal $J$. For the definition and properties of local cohomology, we refer to the books \cite{broadmann, 24hours}.
\end{notation}

\begin{definition}
    The \emph{cohomological dimension} of $I$ with respect to $J$ is:
\begin{equation}\label{cohomological} \cd_{J}(I) = \max ( \{0\} \cup \{i \in \mathbb{Z}_{>0} \text{ s.t. } H^i_{J}(I) \neq 0\} ).\end{equation}
\end{definition}


\begin{remark}
\label{remarkcohomological}
    The cohomological dimension is bounded above by the minimal number of generators of $J$; see \cite{broadmann}. In the case where $J$ is $\mx$ or $\my$, the minimal number of generators is $n + 1$ and $m + 1$, respectively. 
\end{remark}

The recent work of Chardin and Holanda \cite{chardinholanda}, see also \cite{botbol2012castelnuovo}, 
relates the vanishing of the local cohomology with respect to $\mathfrak{b}$ with the vanishing with respect to the ideals $\mx$ and $\my$. To state this result, we first need to introduce some additional notation.

\begin{definition}
\label{def:support}
Let $E \subset \mathbb{Z}^2$ be a subset. The subset $E^{\star}$ is defined as:
$$E^{\star} = \{(a,b) \in \mathbb{Z}^2 \,:\, \text{there exists } (a',b') \in E \text{ s.t. } (a',b') \geq (a,b) \}.$$

\end{definition}

In particular, we are interested in the case where the subsets $E$ are the supports of the local cohomology modules with respect to a homogeneous ideal $J$; in our case $J$ is $\mb, \mx$ or $\my$.

\begin{definition}
    Let $I,J \subset S$ be bihomogeneous ideals. The support of the local cohomology modules of $I$  with respect to $J$ are the bidegrees $(a,b) \in \mathbb{Z}^2$ such that there exists $i \geq 1$ for which $H^i_J(I)_{(a,b)}$ is not zero, i.e.,
\begin{equation}\label{supports}
    \supp_{\mathbb{Z}^2}(H^{\bullet}_{J}(I)) =
    \{(a,b) \in \mathbb{Z}^2 \,:\,  \textrm{there exists } i\geq 1 \textrm{ s.t. } H_{J}^i(I)_{(a,b)} \neq 0  \}.
\end{equation}
\end{definition}

The following theorem relates the supports of the local cohomology modules with respect to $\mathfrak{b}$ with the supports of the local cohomology modules with respect to $\mx$ and $\my$. 

\begin{theorem} 
[{\cite[Theorem 3.11]{chardinholanda}}]
\label{def:chardinholanda}
Let $I \subset S$ be a bihomogeneous ideal, then: 
    $$\supp_{\mathbb{Z}^2}(H^{\bullet}_{\mathfrak{b}}(I))^{\star} = \supp_{\mathbb{Z}^2}(H^{\bullet}_{\mx}(I))^{\star} \cup \supp_{\mathbb{Z}^2}(H^{\bullet}_{\my}(I))^{\star}.$$
\end{theorem}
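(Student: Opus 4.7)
The plan is to deduce this from the Mayer--Vietoris long exact sequence for local cohomology applied to the ideals $\mx$ and $\my$. Since local cohomology depends only on the radical and $\sqrt{\mb} = \sqrt{\mx \cap \my}$, one has $H^i_{\mb}(I) = H^i_{\mx \cap \my}(I)$ for every $i$, while $\mx + \my$ is the maximal ideal. Mayer--Vietoris then yields, graded piece by graded piece,
$$\cdots \to H^i_{\mx + \my}(I)_{(a,b)} \to H^i_{\mx}(I)_{(a,b)} \oplus H^i_{\my}(I)_{(a,b)} \to H^i_{\mb}(I)_{(a,b)} \to H^{i+1}_{\mx + \my}(I)_{(a,b)} \to \cdots$$
This is the skeleton of the argument; the $\star$-closure on both sides of the claimed identity should absorb the error terms coming from $H^{\bullet}_{\mx + \my}(I)$.

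The second ingredient I would invoke is that $H^j_{\mx + \my}(I)$ has bounded-above support: there exist integers $A, B$ with $H^j_{\mx + \my}(I)_{(a,b)} = 0$ whenever $a > A$ or $b > B$, for every $j$. This follows from Grothendieck vanishing combined with the classical description of $H^{n+m+2}_{\mx + \my}(S)$ as the only nonvanishing cohomology of $S$ and its concentration in bidegrees $(a,b)$ with $a \leq -n-1$, $b \leq -m-1$, propagated to $I$ via a bigraded free resolution. The inclusion $\supseteq$ follows by taking any $(a',b') \geq (a,b)$ with a nonzero class in $H^i_{\mx}(I)_{(a',b')}$ or $H^i_{\my}(I)_{(a',b')}$ and transporting it, via the natural $S$-action, to some bidegree $(a'',b'') \geq (a',b')$ at which $H^i_{\mx + \my}(I)$ vanishes, so that the sequence injects it into $H^i_{\mb}(I)_{(a'',b'')}$. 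The inclusion $\subseteq$ follows dually: a nonzero class in $H^i_{\mb}(I)_{(a',b')}$ either pulls back to the direct sum at $(a',b')$, or maps nontrivially into $H^{i+1}_{\mx + \my}(I)_{(a',b')}$; in the latter case one chases the class along the sequence into $H^{i+1}_{\mx}(I) \oplus H^{i+1}_{\my}(I)$ after shifting further to suppress the next $H_{\mx + \my}$ term.

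The main obstacle I anticipate is guaranteeing that the bidegree shifts needed to step past the bounded-above support of $H^{\bullet}_{\mx + \my}(I)$ actually preserve the nonvanishing of the tracked partial-cohomology classes. This requires a careful structural analysis of $H^{\bullet}_{\mx}(I)$ and $H^{\bullet}_{\my}(I)$, most naturally via their \v{C}ech complex realisations with respect to a single block of variables: for instance, $H^i_{\mx}(I)$ carries a natural $\mathbf{k}[y]$-module structure coming from the $x$-\v{C}ech complex, and it is this structure that both enables the propagation of nonvanishing in the $y$-direction and justifies why the correct invariant on each side is the downward closure $(\cdot)^\star$ rather than the raw support.
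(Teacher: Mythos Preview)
The paper does not prove this theorem; it is quoted verbatim from \cite[Theorem 3.11]{chardinholanda} and used as a black box. So there is no ``paper's own proof'' to compare against, only the original Chardin--Holanda argument.

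Your Mayer--Vietoris skeleton is the natural first move, and your identification of the obstacle is accurate --- but it is a genuine gap, not a technicality. The step ``transport the nonzero class in $H^i_{\mx}(I)_{(a',b')}$ via the $S$-action to a bidegree beyond the support of $H^\bullet_{\mx+\my}(I)$'' fails in general: the $\mathbf{k}[y]$-module $H^i_{\mx}(I)_{(a',*)}$ can have $\my$-torsion, so multiplication by $y$-forms may annihilate the class before you escape the bad region. The same problem afflicts the chase you propose for the other inclusion. The $(\cdot)^\star$ closure does not fix this by itself: it lets you \emph{choose} a witnessing bidegree, but it does not let you \emph{move} a given witness.

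What is actually needed, and what the Chardin--Holanda proof supplies, is a direct comparison of the supports via the composite-functor spectral sequence
\[
E_2^{p,q}=H^p_{\my}\bigl(H^q_{\mx}(I)\bigr)\ \Longrightarrow\ H^{p+q}_{\mx+\my}(I),
\]
together with the elementary observation that if $M$ is a $\mathbb{Z}$-graded $\mathbf{k}[y]$-module with $M_{b''}=0$ for all $b''\geq b$, then the \v{C}ech complex for $\my$ vanishes identically in degrees $\geq b$, whence $H^p_{\my}(M)_{b'}=0$ for $b'\geq b$. This lets you deduce vanishing of $H^\bullet_{\mx+\my}(I)$ in the relevant quadrant directly from the vanishing of $H^\bullet_{\mx}(I)$ (and symmetrically $H^\bullet_{\my}(I)$), without ever needing to push a particular cohomology class around. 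Combined with Mayer--Vietoris this handles one inclusion; the reverse inclusion needs a symmetric spectral-sequence argument. Your sketch has the right long exact sequence but is missing this second structural ingredient.
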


\begin{remark}
\label{rem:lcohomo-when-I-eq-S}
If $I = S$, \ccc{the supports of the local cohomology with respect to $\mx$ (similarly for $\my$) are:
\begin{equation}
    \label{localcohomologyx}
\supp_{\mathbb{Z}^2}(H^{\bullet}_{\mx}(S)) = (-n-1,0) + (-\mathbb{N} \times \mathbb{N}),
\end{equation}
see \cite[Example 2.3]{chardinholanda}.}

\end{remark}



\begin{definition}
\label{truncatedideals}
    For any bihomogeneous ideal $I \subset S$ and $(a,b) \in \mathbb{Z}^2$, consider the truncated ideal 
    \[ I_{\geq (a,b)} := \bigoplus_{(a',b') \geq (a,b)}I_{(a',b')} . \]
\end{definition}

The following result from \cite{chardinholanda} relates the local cohomology with respect to $\mx$ of the truncated modules $I_{\geq (a,b)}$ with the local cohomology of $I$.

\begin{lemma} [{\cite[Proposition 4.4]{chardinholanda}}]
\label{lemmachardinholandasupports}
    Let $I \subset S$ be a bihomogeneous ideal and $(a,b) \in \mathbb{Z}^2$, then: 
    \begin{itemize}
        \item[i)] If $(a',b') \geq (a,b)$, then:
        $$ H^1_{\mx}(I_{\geq (a,b)})_{(a',b')} = H^1_{\mx}(I)_{(a',b')} .$$
        \item[ii)] For all $i \geq 2$, then:
        $$ H^i_{\mx}(I_{\geq (a,b)}) = H^i_{\mx}(I).$$
    \end{itemize}
\end{lemma}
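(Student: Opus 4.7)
My plan is to derive both assertions from the long exact sequence in $H^\bullet_{\mx}$ coming from the short exact sequence
\begin{equation*}
0 \to I_{\geq (a,b)} \to I \to M \to 0, \qquad M := I/I_{\geq (a,b)}.
\end{equation*}
After that, both statements will amount to showing that $H^i_{\mx}(M)$ vanishes in the appropriate range of bidegrees.

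The key structural observation I would use is that, since $\mx$ is generated by the $x$-variables, all of $y$-degree $0$, the local cohomology functor $H^i_{\mx}$ decomposes along the $y$-grading. Concretely, working with the Cech complex $\check{C}^\bullet(x_0,\ldots,x_n;N)$ of any bigraded $S$-module $N$, one notes that its differentials preserve $y$-degree, and therefore
\begin{equation*}
H^i_{\mx}(N)_{(a',b')} \;=\; H^i_{(x_0,\ldots,x_n)}\bigl(N^{(b')}\bigr)_{a'}, \qquad N^{(b')} := \bigoplus_{a'} N_{(a',b')},
\end{equation*}
where $N^{(b')}$ is regarded as a graded $\mathbf{k}[x_0,\ldots,x_n]$-module. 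This reduces the computation of $H^i_{\mx}$ to a family of single-graded local cohomology computations, one per $y$-slice.

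Applying this to $M$: by Definition~\ref{truncatedideals} we have $M_{(a',b')} = I_{(a',b')}$ when $(a',b') \not\geq (a,b)$, and $M_{(a',b')} = 0$ otherwise. In particular, for every $b' \geq b$ the slice $M^{(b')} = \bigoplus_{a' < a} I_{(a',b')}$ is concentrated in finitely many $x$-degrees, hence is $\mx$-torsion as a $\mathbf{k}[x_0,\ldots,x_n]$-module. This yields the crucial vanishing
\begin{equation*}
H^i_{\mx}(M)_{(a',b')} = 0 \quad \text{for every } i \geq 1 \text{ and every } (a',b') \text{ with } b' \geq b.
\end{equation*}

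Feeding this into the long exact sequence settles part ii): for $i \geq 2$ and every $(a',b')$ with $b' \geq b$, both $H^{i-1}_{\mx}(M)_{(a',b')}$ and $H^i_{\mx}(M)_{(a',b')}$ vanish, so the natural map $H^i_{\mx}(I_{\geq (a,b)})_{(a',b')} \to H^i_{\mx}(I)_{(a',b')}$ is forced to be an isomorphism. For part i), fix $(a',b') \geq (a,b)$; then in addition to $H^1_{\mx}(M)_{(a',b')} = 0$ one has $H^0_{\mx}(M)_{(a',b')} = M_{(a',b')} = 0$, so the relevant fragment of the long exact sequence collapses to
\begin{equation*}
0 \to H^1_{\mx}(I_{\geq (a,b)})_{(a',b')} \to H^1_{\mx}(I)_{(a',b')} \to 0,
\end{equation*}
giving the claimed equality.

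I expect the most delicate step to be the rigorous justification of the $y$-slice decomposition of $\mx$-local cohomology, which one must check at the level of the Cech complex (or equivalently via appropriate bigraded injective resolutions); once this slicing is established, the remainder is routine long-exact-sequence bookkeeping using the description of $M$ forced by its definition as a truncation quotient.
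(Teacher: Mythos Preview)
Your approach is correct and is the natural one: the short exact sequence $0 \to I_{\geq (a,b)} \to I \to M \to 0$ together with the $y$-slice decomposition of $\mx$-local cohomology (which is indeed immediate from the \v{C}ech complex, since inverting elements of $y$-degree $0$ commutes with taking the $b'$-th $y$-graded piece) is exactly the right tool. The paper itself gives no proof and simply cites \cite[Proposition~4.4]{chardinholanda}, so there is no argument to compare against.

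One point deserves an explicit comment. Part ii), as stated, asserts $H^i_{\mx}(I_{\geq (a,b)}) = H^i_{\mx}(I)$ for $i\geq 2$ as an equality of \emph{entire} bigraded modules, whereas your argument yields this only in bidegrees $(a',b')$ with $b'\geq b$. This is not a flaw in your proof; rather, the lemma as printed is slightly too strong. For $b'<b$ the slice $(I_{\geq (a,b)})^{(b')}$ is zero, so $H^i_{\mx}(I_{\geq (a,b)})_{(\ast,b')}=0$, while $H^i_{\mx}(I)_{(\ast,b')}$ need not vanish: for instance, with $n=m=1$, $I=(x_0y_0)$ and $(a,b)=(0,2)$, one has $I^{(1)}\cong \mathbf{k}[x_0,x_1](-1)$ and hence $H^2_{\mx}(I)_{(-1,1)}\neq 0$, yet $H^2_{\mx}(I_{\geq (0,2)})_{(-1,1)}=0$. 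The version you establish, namely equality for all $(a',b')$ with $b'\geq b$, is precisely what is used in the proof of Lemma~\ref{lemmainduction}, where every bidegree under consideration satisfies $b'\geq b$; so nothing is lost downstream.
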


\subsection{The bigraded Castelnuovo-Mumford regularity and the Betti numbers}

The bigraded generalization of the Castelnuovo-Mumford regularity, introduced by Maclagan and Smith \cite{MGS-mgcmr-04}, is one of the central algebraic objects in our study. It is defined by considering the vanishing of the local cohomology modules with respect to the irrelevant ideal $\mathfrak{b}$.
\begin{definition}
[{\cite[Definition 1.1]{MGS-mgcmr-04}}] 
\label{maclagansmithpreelim} Consider a bihomogeneous ideal $I \subset S$. The bigraded Castelnuovo-Mumford regularity $\reg(I)$ is the subset of $\mathbb{Z}^2$ 
containing bidegrees $(a,b)$ 
such that, for all $i \geq 1$
and for all $(a',b') \geq (a - \lambda_x,b - \lambda_y)$,
it holds
$$ H_{\mathfrak{b}}^i(I)_{(a',b')} = 0,$$
where  $\lambda_x + \lambda_y = i - 1$, with $\lambda_x, \lambda_y \in \mathbb{Z}_{\geq 0}$.
\end{definition}


The above definition preserves some of the classical properties of the Castelnuovo-Mumford regularity; for example, it \ccc{provides a bounding region} the degrees of the polynomial equations that cut out the variety defined by $I$. Moreover, Bruce, Cranton-Heller and Sayrafi proved that $(a,b) \in \reg(I)$ if and only if the truncated ideal $I_{\geq (a,b)}$ has a \textit{quasi-linear} resolution; see \cite[Theorem A]{bruce} for more details. Another important feature of the bigraded Castelnuovo-Mumford regularity is its relation with the generators of $I$.
\begin{theorem}[{\cite[Theorem 1.3]{MGS-mgcmr-04}}]\label{maclagansmithgenerators}
    Let $I \subset S$ be a \lb{bi}homogeneous ideal. If $(a,b) \in \reg(I)$ then, for all $(a',b') \gneq (a,b)$ there are no minimal generators of $I$ of degree $(a',b')$.
\end{theorem}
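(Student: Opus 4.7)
The plan is to prove the equivalent statement: under the hypothesis $(a,b) \in \reg(I)$, the truncated ideal $I_{\geq (a,b)}$ is generated by its bidegree-$(a,b)$ component $I_{(a,b)}$. This immediately yields the theorem, because any minimal generator of $I$ of bidegree $(a',b') \gneq (a,b)$ would also be a minimal generator of $I_{\geq (a,b)}$ at that bidegree, contradicting generation from $I_{(a,b)}$ alone.

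To establish this generation statement, I would reduce it to a one-step surjectivity claim: for every bidegree $(a'',b'') \gneq (a,b)$, at least one of the multiplication maps
\[
S_{(1,0)} \otimes_{\mathbf{k}} I_{(a''-1,b'')} \longrightarrow I_{(a'',b'')}
\quad\text{or}\quad
S_{(0,1)} \otimes_{\mathbf{k}} I_{(a'',b''-1)} \longrightarrow I_{(a'',b'')},
\]
chosen according as $a'' > a$ or $b'' > b$, is surjective. Iterating this along a monotone staircase from $(a,b)$ to $(a',b')$, every element of $I_{(a',b')}$ is expressed as a sum of products of monomials with elements of $I_{(a,b)}$, hence lies in the ideal generated by $I_{(a,b)}$.

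The surjectivity of each one-step map follows from the vanishing of local cohomology built into the definition of $\reg(I)$. Using the standard Castelnuovo--Mumford technique of multiplying by a generic linear $x$-form (resp.\ linear $y$-form) and passing to the long exact sequence in $H^\bullet_{\mathfrak{b}}(-)$, one identifies the obstruction to surjectivity with the failure of certain $H^i_{\mathfrak{b}}(I)$ to vanish at a specific bidegree. Definition~\ref{maclagansmithpreelim} is tailored exactly so that the decomposition $\lambda_x + \lambda_y = i - 1$ forces these obstructions to vanish at every intermediate bidegree on the staircase, while Theorem~\ref{def:chardinholanda} together with Remark~\ref{rem:lcohomo-when-I-eq-S} supplies the ambient cohomology of $S$ itself that appears in the same long exact sequences.

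The main technical obstacle will be the combinatorial bookkeeping: to move from $(a,b)$ to $(a',b')$ with both $a' > a$ and $b' > b$, we must alternate between $x$-multiplications and $y$-multiplications along the staircase, and at each step the cohomological degree $i$ for which vanishing is invoked may change. The use of the sum $\lambda_x + \lambda_y = i-1$ in Definition~\ref{maclagansmithpreelim}, rather than separate bounds on each coordinate, is designed precisely to accommodate arbitrary paths; making this explicit through a double induction on $(a'-a, b'-b)$, with the one-step surjectivity above as the inductive core, should be the technical heart of the argument.
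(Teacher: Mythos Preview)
The paper does not prove this theorem; it is cited from \cite{MGS-mgcmr-04} as a background result, so there is no in-paper argument to compare your proposal against.

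Your overall strategy---show that $I_{\geq(a,b)}$ is generated in the single bidegree $(a,b)$ by establishing one-step surjectivity of multiplication by linear forms along a staircase, with the obstruction controlled by the local-cohomology vanishing encoded in Definition~\ref{maclagansmithpreelim}---is the shape of Maclagan--Smith's original argument. Two comments on the sketch. First, the appeals to Theorem~\ref{def:chardinholanda} and Remark~\ref{rem:lcohomo-when-I-eq-S} are out of place: the Chardin--Holanda comparison between $H^\bullet_{\mathfrak b}$ and $H^\bullet_{\mx},H^\bullet_{\my}$ postdates \cite{MGS-mgcmr-04} and plays no role here, and the cohomology of $S$ itself does not appear in the long exact sequences you would actually write down (those come from $0\to I(-(1,0))\xrightarrow{h} I\to I/hI\to 0$ and its $y$-analogue). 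Second, the sentence ``one identifies the obstruction to surjectivity with the failure of certain $H^i_{\mathfrak b}(I)$ to vanish'' is where the real work is hiding: a single long exact sequence for one generic $h$ does not directly give surjectivity of the full map $S_{(1,0)}\otimes I_{(a''-1,b'')}\to I_{(a'',b'')}$, since multiplication by one linear form can fail to surject even when multiplication by all of $S_{(1,0)}$ does. The actual mechanism is an induction on the number of variables via successive generic hyperplane sections, and carrying the shifts $\lambda_x+\lambda_y=i-1$ through that induction is the substance of the Maclagan--Smith proof.
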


\begin{remark}
\label{rem:reg_and_bigin}
    Note that Theorem \ref{maclagansmithgenerators} implies that $\reg(\bigin(I))$ provides a \ccc{bounding region} for the bidegrees generators of $\bigin(I)$ \ccc{i.e.~a subset to the bidegrees that have no minimal generator of $\bigin(I)$ of higher degrees}. Moreover, along the same lines as in \cite[Proposition 3.16]{MGS-mgcmr-04}, we can prove that the following inclusion holds:\begin{equation}
    \label{inclusion}
        \reg(\bigin(I)) \subset \reg(I).
    \end{equation} However, as we will see in Example \ref{example3}, the previous two regularity regions will, in general, differ. \ccc{Therefore, unlike in the classical case, it is not enough to study $\reg(I)$ in order to understand $\reg(\bigin(I))$}.
\end{remark}

In the single graded case, the Castelnuovo-Mumford regularity can also be determined from Betti numbers, so the relation between the \lb{bi}graded Betti numbers and \ccc{Castelnuovo}-Mumford regularity is a natural question.

\begin{definition}
    \ccc{Let $F_{\bullet}$ be the minimal free resolution of $I$ with:}
    $$F_i = \bigoplus_{(a,b) \in \mathbb{Z}^2} S(-a, -b)^{\beta_{i,(a,b)}}.$$
Here, $S(-a,-b)$ denotes a shift in the grading, namely $S(-a,-b)_{(a',b')} = S_{(a'-a,b'-b)}$ for any $(a',b')$ and $(a,b)$ in $\mathbb{Z}^2$. \ccc{The $i$-th \emph{bigraded Betti numbers} of $I$} are $\beta_i(I) := \{(a,b) \in \mathbb{Z}^2 \,:\, \beta_{i,(a,b)} \neq 0\}$.
\end{definition}

There have been several attempts to describe the region of bidegrees appearing in Definition \ref{maclagansmithpreelim} in terms of the Betti numbers, with some relevant relations between the two descriptions; see \cite{botbol2012castelnuovo,bruce, chardinholanda}. \ccc{However, as shown in \cite[Example 5.1]{bruce}, there are ideals with the same bigraded Betti numbers but different regularity regions.}

In \cite{chardinholanda}, the following relation between Betti numbers and the support of local cohomology modules with respect to the ideals $\mx$ and $\my$ is established.

\begin{theorem}[{\cite[Theorem 1.2]{chardinholanda}}]\label{chardinholandator}
    Let $I$ be a bihomogeneous ideal, then
    $$\bigcup_i \beta_i(I)^{\star} \subset (n + 1, m + 1) + \big(\supp_{\mathbb{Z}^2}(H^{\bullet}_{\mx}(I))^{\star} \,\ccc{\cap}\, \supp_{\mathbb{Z}^2}(H^{\bullet}_{\my}(I))^{\star}\big).$$
\end{theorem}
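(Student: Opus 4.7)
The plan is to use a double complex combining the Cech complex $\check{C}^{\bullet}_{\mx}$ (which computes $H^{\bullet}_{\mx}$) with the Koszul complex $K^y$ on the $y$-variables (a free resolution of $S/\my$ over $S$); namely $\check{C}^{\bullet}_{\mx}(I) \otimes_S K^y$. This double complex gives rise to two spectral sequences converging to the same total hypercohomology. The first, taking Koszul homology first, has $E_2^{p, q} = H^p_{\mx}(\Tor^S_q(S/\my, I))$. The second, taking Cech cohomology first (using that $K^y_q$ is $S$-free, so $H^{\bullet}_{\mx}$ commutes with it), has $E_2^{p, q} = \Tor^S_q(S/\my, H^p_{\mx}(I))$. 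These two spectral sequences provide the bridge between the ``Tor side'' (where Betti numbers live) and the ``local cohomology side'' (where the desired support lies).

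Assuming $\beta_{i, (a, b)}(I) \neq 0$, I would first connect Betti numbers to the $\mx$-local cohomology of Tor modules. An auxiliary spectral sequence with $E_2^{p, q} = \Tor^S_p(S/\mx, \Tor^S_q(S/\my, I))$ abutting to $\Tor^S_{p + q}(I, \mathbf{k})$ (convergent because $\Tor^S_{>0}(S/\mx, S/\my) = 0$, as both Koszul complexes resolve the quotients by regular sequences) produces some $\Tor^S_p(S/\mx, \Tor^S_q(S/\my, I))_{(a, b)} \neq 0$. Viewing $\Tor^S_q(S/\my, I)$ as a bigraded module whose $y$-degree $b$ slice is a graded module over $R = \mathbf{k}[x_0, \ldots, x_n]$, I would apply the classical single-graded Castelnuovo--Mumford bound $\beta^R_p(M)^{\star} \subset (n + 1) + \supp H^{\bullet}_{\mathfrak{m}_R}(M)^{\star}$ to produce some $a' \geq a - n - 1$ with $H^{\bullet}_{\mx}(\Tor^S_q(S/\my, I))_{(a', b)} \neq 0$. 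Using the Cech--Koszul spectral sequences above, a surviving contribution at this bidegree in the first spectral sequence yields a non-zero $\Tor^S_{q'}(S/\my, H^{p'}_{\mx}(I))_{(a', b)}$ in the second; since $\Tor^S_{q'}(S/\my, M)_{(a, b)}$ is a subquotient of $M_{(a, b - q')}$ by the Koszul construction, this implies $H^{p'}_{\mx}(I)_{(a', b - q')} \neq 0$ for some $q' \in \{0, \ldots, m + 1\}$. Thus $(a', b - q') \geq (a - n - 1, b - m - 1)$ lies in $\supp H^{\bullet}_{\mx}(I)$, placing $(a - n - 1, b - m - 1)$ in $\supp H^{\bullet}_{\mx}(I)^{\star}$. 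The symmetric argument with the roles of $\mx$ and $\my$ swapped gives the same membership in $\supp H^{\bullet}_{\my}(I)^{\star}$, which together yield the desired inclusion.

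The main obstacle is the ``transfer step'' within the Cech--Koszul spectral sequence: a non-zero term on the first $E_2$ page at a given bidegree could in principle be killed by a differential before reaching the abutment, which would not automatically force non-vanishing on the second $E_2$ page. To control this, I expect to need either an induction on a suitable lexicographic order on Betti bidegrees, exploiting minimality of a chosen bigraded free resolution $F_{\bullet}$ so that the ``top-$x$ corner'' of the induced complex $H^{n + 1}_{\mx}(F_{\bullet})$ cannot be cancelled, or a dimension/Euler-characteristic argument at the extremal bidegrees where the structure of $H^{n + 1}_{\mx}(S(-a, -b))$ collapses to a one-dimensional piece. This is the most delicate part of the argument and constitutes the multigraded refinement of the classical Bayer--Stillman-type analysis.
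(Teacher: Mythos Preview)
The paper does not prove this theorem; it is stated with a citation to \cite[Theorem 1.2]{chardinholanda} and used as a black box (in Proposition~\ref{theorembetti} and Corollary~\ref{theorembettireg}). There is therefore no proof in the paper to compare against.

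That said, your outline is along the standard lines one expects for such a result and is close in spirit to the spectral-sequence machinery Chardin and Holanda develop. A few remarks on the specifics. The reduction via the change-of-rings spectral sequence $\Tor^S_p(S/\mx,\Tor^S_q(S/\my,I))\Rightarrow \Tor^S_{p+q}(\mathbf{k},I)$ is fine, and your identification of the $y$-degree~$b$ slice of $\Tor^S_p(S/\mx,M)$ with a Koszul homology over $R=\mathbf{k}[x_0,\dots,x_n]$ is correct since the Koszul complex on the $x$-variables does not shift $y$-degrees. For the single-graded input you invoke, note that the precise statement you need is the ``starred'' version: one shows that if $\beta^R_{p,a}(N)\neq 0$ then some $H^j_{\mathfrak m_R}(N)_{a'}\neq 0$ with $a'\geq a-n-1$, and the clean way to get this is to take the \emph{maximal} shift $a^\ast$ occurring among all Betti numbers and observe that at degree $a^\ast-n-1$ the complex $H^{n+1}_{\mathfrak m_R}(F_\bullet)$ has all differentials equal to zero (by minimality of $F_\bullet$ combined with the socle structure of $H^{n+1}_{\mathfrak m_R}(R)$), forcing nonvanishing of some $H^j_{\mathfrak m_R}(N)$ there; then $a-n-1\leq a^\ast-n-1$ gives the starred inclusion.

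Your identified obstacle in the ``transfer step'' between the two \v{C}ech--Koszul spectral sequences is genuine: a nonzero $E_2$ term in the first sequence need not survive to the abutment, so you cannot directly read off nonvanishing in the second $E_2$ page at the same bidegree. One way to bypass this is to avoid the second spectral sequence entirely and instead run the single-graded argument directly on the complex $H^{n+1}_{\mx}(F_\bullet)$ obtained from a minimal bigraded free resolution $F_\bullet$ of $I$; the same ``maximal shift plus socle'' trick, applied to a Betti bidegree $(a^\ast,b^\ast)$ that is maximal for a suitable lexicographic refinement, forces $H^{\bullet}_{\mx}(I)$ to be nonzero at some bidegree $\geq (a-n-1,b-m-1)$, because the relevant graded piece of $H^{n+1}_{\mx}(S)(-a^\ast,-b^\ast)$ cannot be hit from $F_{i+1}$ nor map nontrivially to $F_{i-1}$ at that corner. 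This is essentially the refinement you allude to in your last paragraph, and it is the point where the argument needs to be made precise.
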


In the quest of relating the properties of regularity with the degrees of the minimal generators of $\bigin(I)$ another quantity was introduced by Aramova, Crona and De Negri \cite{aramova2000bigeneric} and R\"omer \cite{roemer} by using the Betti numbers.

\begin{definition}
\label{roemerdefiniiton}
Let $I$ be a bihomogeneous ideal in $S$, then $\xtor(I)$ is the minimal degree $a \in \mathbb{Z}$ such that:
$$\beta_{i,(a'+i+1,b')}(I) = 0, $$ 
for all $ i, b' \in \mathbb{Z}_{\geq 0}$ and for all $a'\geq a$. $\ytor(I)$ is defined similarly.
\end{definition}


If $J$ is a monomial ideal satisfying the properties of Lemma \ref{lemmabiborel}, for instance $\bigin(I)$, then $\xtor(J)$ (resp. $\ytor(J)$) is the maximal degree of any minimal generator of $J$ with respect to the degrees of $x$ variables (resp. $y$).

\begin{theorem}[{\cite[Theorem 2.2]{aramova2000bigeneric}}]
\label{thm:boundAramova}
    Let $I$ be a bihomogeneous ideal. Then, there is $b \in \mathbb N$ and a generator of degree $(\xtor(\bigin(I)),b)$ in $\bigin(I)$. Moreover, no generator of $\bigin(I)$ has degree with respect to the $x$ variables bigger than $\xtor(\bigin(I))$. The same property holds for $\ytor(\bigin(I))$ and the degrees with respect to the $y$ variables.
\end{theorem}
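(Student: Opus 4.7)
The result concerns the bigraded Betti numbers of the bi-Borel fixed monomial ideal $J := \bigin(I)$ (Lemma~\ref{lemmabiborel}). The plan is to reduce the statement to an explicit computation of these Betti numbers in terms of the minimal monomial generators of $J$, via a bigraded Eliahou-Kervaire style resolution.

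For each minimal generator $u = x^\alpha y^\beta$ of $J$ of bidegree $(a_u, b_u)$, set $s(u) := \max\{i : x_i \mid u\}$ and $t(u) := \max\{j : y_j \mid u\}$, with the convention that binomials with negative upper entry and positive lower entry vanish (handling the case when $u$ has no $x$- or no $y$-factor). The key input is that, for a bi-Borel fixed ideal, a minimal bigraded free resolution can be built by iterating the classical Eliahou-Kervaire construction in each block of variables. Consequently, each such $u$ contributes $\binom{s(u)}{i_1}\binom{t(u)}{i_2}$ to $\beta_{i_1+i_2,(a_u+i_1,\, b_u+i_2)}(J)$, and these contributions, living in distinct bidegrees across distinct generators, do not cancel.

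Given this formula, I translate the vanishing condition of Definition~\ref{roemerdefiniiton}: a nonzero value $\beta_{i,(a'+i+1,\, b')}(J)$ forces the existence of a minimal generator $u$ and nonnegative integers $i_1, i_2$ with $i = i_1 + i_2$, $a_u + i_1 = a' + i + 1$, and $b_u + i_2 = b'$. Solving yields $a' = a_u - i_2 - 1$, which, as $i_2 \geq 0$ varies, is maximized at $i_2 = 0$, giving $a' = a_u - 1$. Setting $i_1 = s(u)$ and $i_2 = 0$ then produces a nonzero Betti contribution in bidegree $(a_u + s(u),\, b_u)$ at homological position $s(u)$, confirming attainability. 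Taking the maximum over minimal generators, $\xtor(J) = \max_u a_u$, and this maximum is realised by an actual minimal generator $u$ of $\bigin(I)$, which we take of bidegree $(\xtor(\bigin(I)), b)$ with $b := b_u$. Both assertions about $\xtor(\bigin(I))$ follow, and the statement for $\ytor(\bigin(I))$ is symmetric.

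The main technical obstacle is establishing the bigraded Eliahou-Kervaire description of the minimal free resolution for bi-Borel fixed ideals, rather than the combinatorial deduction above. One approach is to treat one block of variables at a time: first apply the classical Eliahou-Kervaire construction with respect to the $x$-variables (using the $x$-Borel property from Lemma~\ref{lemmabiborel}) to obtain a minimal resolution in that block, then iterate in the $y$-direction, or equivalently use a bigraded Koszul-homology computation and exploit the Borel shift operators in each block to identify nonzero cycle classes. Careful bookkeeping is required at the boundary, namely for generators lacking an $x$- or a $y$-factor, where the binomial conventions collapse one of the two factors to a single summand.
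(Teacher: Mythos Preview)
The paper does not supply its own proof of this statement; it is quoted from \cite[Theorem~2.2]{aramova2000bigeneric}. Your approach---computing the bigraded Betti numbers of the bi-Borel fixed ideal $J=\bigin(I)$ via a bigraded Eliahou--Kervaire resolution and then reading off $\xtor(J)=\max_u a_u$---is exactly the strategy of the cited Aramova--Crona--De Negri paper, so you have correctly reconstructed the intended argument.

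Two small points are worth flagging. First, with the convention $x_0<\cdots<x_n$ and the bi-Borel property of Lemma~\ref{lemmabiborel} (Borel moves go toward \emph{larger} indices), the number of admissible syzygy variables attached to a minimal generator $u$ is governed by $\mu(u):=\min\{i:x_i\mid u\}$ rather than by your $s(u)=\max\{i:x_i\mid u\}$; the correct $x$-contribution is $\binom{n-\mu(u)}{i_1}$. Your $s(u)$ already gives the wrong value of $\beta_1$ for $J=(x_1^2,x_0x_1)\subset\mathbf{k}[x_0,x_1]$. This does not break the argument, because all you really use is that each minimal generator $u$ contributes only to Betti numbers $\beta_{i_1+i_2,(a_u+i_1,b_u+i_2)}$ with $i_1,i_2\geq 0$, and attainability of $a'=a_u-1$ is already witnessed by $i_1=i_2=0$ (the generator itself sits in $\beta_0$), making the detour through $i_1=s(u)$ unnecessary. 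Second, the remark that contributions from distinct generators live ``in distinct bidegrees'' is not correct in general, but it is harmless: the Eliahou--Kervaire resolution is minimal, so the contributions add and never cancel.
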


Furthermore, R\"omer proved that, using the relative order Eq.~\eqref{eqmono}, $\xtor(I)$ behaves well with respect to the bigeneric initial ideal, i.e.
\begin{equation}\label{xtorbigin}\xtor(I) = \xtor(\bigin(I)).\end{equation}
A direct consequence of this is that the maximum degree of the \lb{minimal} generators of $\bigin(I)$ with respect to the $x$ variables is  $\xtor(I)$.

\begin{theorem}[{\cite[Proposition 4.2]{roemer}}]
\label{thm:boundRomer}
    Let $I \subset S$ be a bihomogeneous ideal. Then, there is $b \in \mathbb N$ and a generator of degree $(\xtor(I),b)$ in $\bigin(I)$. Moreover, no generator of $\bigin(I)$ has degree with respect to the $x$ variables bigger than $\xtor(I)$.
\end{theorem}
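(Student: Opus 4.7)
The plan is to deduce the statement directly from Theorem \ref{thm:boundAramova} combined with the Römer invariance \eqref{xtorbigin}; the theorem is essentially a two-line corollary of these two cited results, and no independent argument is required.

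\textbf{Step 1.} I would first invoke Theorem \ref{thm:boundAramova} applied to the bigeneric initial ideal of $I$: this asserts the existence of some $b \in \mathbb{N}$ such that $\bigin(I)$ admits a minimal generator of bidegree $(\xtor(\bigin(I)),b)$, and that no minimal generator of $\bigin(I)$ has $x$-degree strictly greater than $\xtor(\bigin(I))$. This step is where the bi-Borel property of $\bigin(I)$ (Lemma \ref{lemmabiborel}) is essential: under the bi-Borel fixed condition the invariant $\xtor$ can be read off as the maximal $x$-degree of a minimal generator of the monomial ideal $\bigin(I)$, which is precisely what Theorem \ref{thm:boundAramova} encodes.

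\textbf{Step 2.} I would then substitute the Römer invariance \eqref{xtorbigin}, namely $\xtor(I) = \xtor(\bigin(I))$, into both conclusions of Step 1. Replacing $\xtor(\bigin(I))$ by $\xtor(I)$ in the statement that there is a minimal generator of $\bigin(I)$ of bidegree $(\xtor(\bigin(I)),b)$, and in the statement that no minimal generator of $\bigin(I)$ has $x$-degree exceeding $\xtor(\bigin(I))$, yields exactly the two assertions of the theorem.

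There is no genuine obstacle to overcome, since the substantive content is already absorbed into the two cited results: the bi-Borel bound of Aramova–Crona–De Negri (Theorem \ref{thm:boundAramova}) and the generic-initial invariance of $\xtor$ proved by Römer (equation \eqref{xtorbigin}), both of which we take as a black box. The only point requiring care is the compatibility of monomial-order conventions: both cited statements, and hence the substitution above, require the degree reverse lexicographical order with the $x$-block smaller than the $y$-block as fixed in Eq.~\eqref{eqmono}; since the current theorem is stated under the same convention, the substitution is valid.
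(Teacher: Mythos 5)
Your proposal is correct and takes essentially the same approach as the paper: immediately before Theorem~\ref{thm:boundRomer}, the paper states that Eq.~\eqref{xtorbigin} combined with Theorem~\ref{thm:boundAramova} directly yields the result, which is exactly your two-step substitution argument.
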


R\"omer also noted \cite[Remark 4.3]{roemer} that, as we are using the monomial order in Eq.~\eqref{eqmono}, 
$\ytor(I)$ and $\ytor(\bigin(I))$ might be different and so the previous theorem does not hold for the $y$ variables. We illustrate this in the following example.



\begin{example}
\label{example2}
We continue Example \ref{ex:2-diff-orders}. From the minimal free resolution of $I$ (see \cite[Example 7.1]{bruce}) we get $\xtor(I) = 8$ and $\ytor(I) = 3$. Using the results in \cite{aramova2000bigeneric} and \cite{roemer}, we can derive the maximal degrees of the generators $\bigin(I)$ with respect to each block of variables.
    \begin{figure}
        \centering
        \begin{tikzpicture}[thick,scale=0.5, every node/.style={transform shape}]

\filldraw[green] (1,3) circle (3pt) node[anchor=north] {};
\filldraw[green] (2,2) circle (3pt) node[anchor=north] {};
\filldraw[green] (3,2) circle (3pt) node[anchor=north] {};
\filldraw[green] (5,1) circle (3pt) node[anchor=north] {};
\filldraw[green] (8,0) circle (3pt) node[anchor=north] {};

\filldraw[black] (6,1) circle (3pt) node[anchor=north] {};

\filldraw[black] (2,3) circle (3pt) node[anchor=north] {};

\filldraw[black] (2,4) circle (3pt) node[anchor=north] {};
\filldraw[black] (2,5) circle (3pt) node[anchor=north] {};
\filldraw[black] (2,6) circle (3pt) node[anchor=north] {};
\filldraw[black] (2,7) circle (3pt) node[anchor=north] {};
\filldraw[black] (2,8) circle (3pt) node[anchor=north] {};

\filldraw[purple] (4,9.3) node[anchor=north] {$\mathfrak{R}_y(\bigin(I))$};

\filldraw[blue] (4,4.3) node[anchor=north] {$\mathfrak{R}_y(I)$};

\filldraw[purple] (9.2,2) node[anchor=north] {$\mathfrak{R}_x(I)$};

  \draw[thick,color=purple,-] (7.5,8.5) -- (7.5,-0.5) -- (8.5,-0.5) -- (8.5,8.5) node[anchor=south east] {};

  \draw[thick,color=blue,-] (9.5,2.5) -- (-0.5,2.5) -- (-0.5,3.5) -- (9.5,3.5) node[anchor=south east] {};

  \draw[thick,color=purple,-] (9.5,7.5) -- (-0.5,7.5) -- (-0.5,8.5) -- (9.5,8.5) node[anchor=south east] {};

\filldraw[red] (4,4) node[anchor=north] {};
\filldraw[red] (4,5) node[anchor=north] {};
\filldraw[red] (4,6) node[anchor=north] {};
\filldraw[black] (5,1) node[anchor=north] {\color{red} };
\filldraw[white] (5,4) circle (2pt)  node[anchor=north] {\color{red} };
\filldraw[red] (5,5) node[anchor=north] {};
\filldraw[red] (5,6) node[anchor=north] {};
\filldraw[black] (6,1)node[anchor=north] {\color{red} };
\filldraw[red] (6,2) node[anchor=north] {};
\filldraw[red] (6,3) node[anchor=north] {};
\filldraw[red] (6,4) node[anchor=north] {};
\filldraw[red] (6,5) node[anchor=north] {};

\draw[thick,->] (0,0) -- (9.5,0) node[anchor=north west] {};

\draw[thick,->] (0,0) -- (0,8.5) node[anchor=south east] {};
\foreach \x in {,1,2,3,4,5,6,7,8,9}
 \draw (\x,1pt) -- (\x,-1pt) node[anchor=north] {$\x$};

\foreach \y in {0,1,2,3,4,5,6,7,8}
   \draw (1pt,\y cm) -- (-1pt,\y cm) node[anchor=east] {$\y$};

\end{tikzpicture}
        
        \caption{The bound for the degrees of the $x$'s is given by $\xtor(I)$. However, the bound for the degrees of the $y$'s is given by $\ytor(\bigin(I))$, which is different from  $\ytor(I)$.}
        \label{fig:figureroemerintro}
    \end{figure}
\end{example}

\section{First examples and the case of ideals defining empty varieties}
\label{sec:first_examples} 

The works of Aramova et al. \cite{aramova2000bigeneric} and R\"omer \cite{roemer} allow us to construct a \lb{bounding region} for the bidegrees of the minimal generators of $\bigin(I)$ \ccc{with respect to one of the degree components}.
One of our objectives in this work is to construct finer regions bounding these bidegrees \ccc{compared to those appearing in \cite{aramova2000bigeneric,roemer}.}

Following the work of Bayer and Stillman, a natural candidate to construct such regions is the bigraded Castelnuovo-Mumford regularity either from $I$ or $\bigin(I)$. \ccc{In this section, we illustrate that} it is not possible to construct straightforwardly such a generalization \ccc{by showing two phenomena that could never have happened in the $\mathbb{Z}$-graded case}: on the one hand, there can be unbounded regions outside of $\reg(I)$ and $\reg(\bigin(I))$ where there is no minimal generator of $\bigin(I)$ (see Example \ref{examplemat2}); on the other hand, there can be minimal generators with bidegrees which are strictly bigger than $\reg(I)$; see Example \ref{example3}. The same example also illustrates that $\reg(I)$ and $\reg(\bigin(I))$ may differ.

We start our discussion by considering the case of ideals defining empty varieties for which we can characterize the regularity in terms of the associated Hilbert function.
Let $I$ be a bihomogeneous ideal defining an empty variety of $\mathbb{P}^n \times \mathbb{P}^m$ and consider the associated Hilbert function
\begin{equation}
\label{hilbertfunction}
\begin{array}{clcl}
     \HF_{S/I} : &  \mathbb{Z}^2  &\to& \mathbb{Z}_{\geq 0}   \\
     & (a,b) &\mapsto& \dim_{\mathbf{k}}(S/I)_{(a,b)} 
\end{array}.
\end{equation}
\ccc{In the case of $I$ defining an empty variety, }the regularity $\reg(I)$ is determined by the bidegrees at which this function attains the value zero, i.e.,
\begin{equation}
    \label{remarkhilbertfunction}
 \reg(I) = \{ (a,b) \in \mathbb{Z}^2 \,:\, \HF_{S/I}(a,b) = 0 \}.
\end{equation}
\ccc{Note that $\reg(I)$ coincides with the degrees at which $H_{\mathfrak{b}}^1(I)_{(a,b)} = 0$. As, in this case, $H^i_{\mathfrak{b}}(I) = 0$ for $i \neq 1$, we can derive Eq.~\eqref{remarkhilbertfunction} from the  Grothendieck-Serre formula \cite[Proposition 2.14]{boundsmaclagansmith}.}

It is a general property that the Hilbert functions of an ideal $I$ and its bigeneric initial ideal $\bigin(I)$ coincide; this also follows similarly from the single-graded case \cite[Theorem 15.26]{eisenbud1995}. Therefore, if $I$ defines an empty subvariety of $\mathbb{P}^n \times \mathbb{P}^m$, it holds that $$\reg(I) = \reg(\bigin(I)).$$ In addition, it is possible to determine the existence of minimal generators of $\bigin(I)$ at bidegrees where the Hilbert function of $I$ vanishes.



    
%


\begin{theorem}
\label{hilbertfunctiontheorem}
Let $(a,b) \in \mathbb{Z}_{\geq 0}^2$ be a bidegree such that $\HF_{S/I}(a,b) = 0$. Then, $\bigin(I)$ has a minimal generator of bidegree $(a,b)$ \ccc{, if and only if, }$\HF_{S/I}(a',b') \neq 0$ for every $(a',b') \lneq (a,b)$.
\end{theorem}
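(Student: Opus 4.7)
The approach combines the invariance of Hilbert functions under Gr\"obner degeneration with the bi-Borel property of $\bigin(I)$ recalled in Lemma~\ref{lemmabiborel}. Throughout, the key observation is that $\HF_{S/I}=\HF_{S/\bigin(I)}$, so the vanishing $\HF_{S/I}(a',b')=0$ is equivalent to $\bigin(I)_{(a',b')}=S_{(a',b')}$: every monomial of bidegree $(a',b')$ lies in $\bigin(I)$.

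For the forward direction I would argue by contrapositive. Suppose there exists $(a',b')\lneq(a,b)$ with $\HF_{S/I}(a',b')=0$. Given any monomial $x^{\alpha}y^{\beta}$ of bidegree $(a,b)$, one can split the exponent vectors as $\alpha=\alpha_1+\alpha_2$ and $\beta=\beta_1+\beta_2$ with $|\alpha_1|=a'$ and $|\beta_1|=b'$, which is possible because $a'\leq a$ and $b'\leq b$. The first factor $x^{\alpha_1}y^{\beta_1}$ lies in $\bigin(I)$ by the observation above, while the second factor $x^{\alpha_2}y^{\beta_2}$ has bidegree $(a-a',b-b')\neq(0,0)$ and is therefore a non-unit. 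Hence no monomial of bidegree $(a,b)$ can be a minimal generator of $\bigin(I)$.

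For the converse I would exhibit the explicit candidate $m:=x_0^{a}y_0^{b}$. Since $\HF_{S/I}(a,b)=0$, the monomial $m$ lies in $\bigin(I)$. The variables dividing $m$ are $x_0$ (when $a\geq 1$) and $y_0$ (when $b\geq 1$), so minimality reduces to showing that neither $m/x_0$ nor $m/y_0$ belongs to $\bigin(I)$. Suppose $m/x_0=x_0^{a-1}y_0^b\in\bigin(I)$. Iterating the bi-Borel property of Lemma~\ref{lemmabiborel}, each application of which replaces an $x_i$ by some $x_j$ with $j\geq i$ (and similarly for the $y$'s), one transforms $x_0^{a-1}y_0^b$ into an arbitrary monomial of bidegree $(a-1,b)$, all of which must therefore lie in $\bigin(I)$. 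This forces $\HF_{S/I}(a-1,b)=0$, contradicting the hypothesis since $(a-1,b)\lneq(a,b)$. The symmetric argument handles $m/y_0$, so $m$ is a minimal generator of $\bigin(I)$ of bidegree $(a,b)$.

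The only technical step is the claim that $x_0^{a-1}y_0^b$ is the minimum, under the partial order induced by the bi-Borel replacements, among monomials of bidegree $(a-1,b)$; this is a straightforward combinatorial verification and does not pose a real obstacle. The edge cases $a=0$ or $b=0$ are handled by dropping the irrelevant variable-divisibility check, with the remaining argument going through unchanged.
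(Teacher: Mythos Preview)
Your proof is correct and follows essentially the same approach as the paper: both hinge on the candidate $x_0^{a}y_0^{b}$ and the equivalence, via the bi-Borel property (Lemma~\ref{lemmabiborel}), between $x_0^{a'}y_0^{b'}\in\bigin(I)$ and $\HF_{S/I}(a',b')=0$. The paper compresses both directions into a single chain of equivalences around this candidate, whereas you separate the two directions and spell out the exponent-splitting for the forward implication, but the underlying argument is the same.
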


\begin{proof}
With respect to DRL, the monomial $x_0^{a}y_0^{b}$ is the smallest monomial of bidegree $(a,b)$. As $\HF_{S/I}(a,b) = 0$, we have that $x_0^{a}y_0^{b} \in \bigin(I)_{(a,b)}$. If $x_0^{a}y_0^{b}$ is not a minimal generator of $\bigin(I)$, then there must be $(a',b') \lneq (a,b)$ such that $x_0^{a'}y_0^{b'} \in \bigin(I)$. By Lemma \ref{lemmabiborel}, this last condition is equivalent to the fact that every monomial of bidegree $(a',b')$ belongs to $\bigin(I)$, that is, equivalent to $\HF_{S/I}(a',b') = 0$ for some $(a',b') \lneq (a,b)$. 
\end{proof}

As a consequence of this theorem, we deduce that the region $\reg(I)$ does not contain bidegrees of minimal generators of $\bigin(I)$, except maybe in its minimal bi-degrees with respect to inclusion.

\begin{corollary}
\label{emptyimplication}
Assume that $I$ defines an empty variety of $\mathbb{P}^n \times \mathbb{P}^m$. If $(a,b) \in \reg(I)$ then $\bigin(I)$ has no minimal generator of bidegree $(a',b') \gneq (a,b)$.
\end{corollary}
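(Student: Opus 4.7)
The plan is to derive the corollary directly from Theorem~\ref{hilbertfunctiontheorem} combined with the Hilbert-function characterization of $\reg(I)$ for ideals with empty vanishing locus stated in Eq.~\eqref{remarkhilbertfunction}.

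First, I would record an elementary monotonicity property of the Hilbert function under the assumption that $I$ defines an empty subvariety of $\mathbb{P}^n \times \mathbb{P}^m$: the set
\[
\{(a,b) \in \mathbb{Z}^2 \,:\, \HF_{S/I}(a,b) = 0\}
\]
is upward closed. Indeed, $\HF_{S/I}(a,b) = 0$ is equivalent to $I_{(a,b)} = S_{(a,b)}$, and for any $(a',b') \geq (a,b)$, multiplication gives $S_{(a',b')} = S_{(a'-a,b'-b)} \cdot S_{(a,b)} \subseteq I_{(a',b')}$, hence $\HF_{S/I}(a',b') = 0$. By Eq.~\eqref{remarkhilbertfunction}, the region $\reg(I)$ inherits this upward closedness.

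Next, assume $(a,b) \in \reg(I)$ and fix any $(a',b') \gneq (a,b)$. By the monotonicity established above, $(a',b') \in \reg(I)$ as well, and in particular $\HF_{S/I}(a',b') = 0$. I can therefore apply Theorem~\ref{hilbertfunctiontheorem} at the bidegree $(a',b')$: the existence of a minimal generator of $\bigin(I)$ of bidegree $(a',b')$ would force $\HF_{S/I}(a'',b'') \neq 0$ for every $(a'',b'') \lneq (a',b')$. Taking $(a'',b'') = (a,b)$ (which satisfies $(a,b) \lneq (a',b')$ by hypothesis) produces an immediate contradiction with $\HF_{S/I}(a,b) = 0$, so no such minimal generator exists.

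There is no real obstacle here beyond assembling the pieces in the right order; the only subtle point, which I would highlight explicitly in the write-up, is the upward closedness of $\reg(I)$ in the empty-variety setting, since without it Theorem~\ref{hilbertfunctiontheorem} cannot be invoked at $(a',b')$. Once monotonicity is in place, the corollary follows as a one-line application of the ``only if'' direction of Theorem~\ref{hilbertfunctiontheorem}.
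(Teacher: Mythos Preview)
Your proposal is correct and follows essentially the same approach as the paper, which simply cites Eq.~\eqref{remarkhilbertfunction} and Theorem~\ref{hilbertfunctiontheorem}. You spell out the one detail the paper leaves implicit, namely the upward closedness of the vanishing locus of $\HF_{S/I}$, which is indeed needed to place $(a',b')$ in the hypothesis of Theorem~\ref{hilbertfunctiontheorem}.
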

\begin{proof}
    Follows from Eq.~\eqref{remarkhilbertfunction} and Theorem \ref{hilbertfunction}.
\end{proof}

\begin{remark} \lb{We notice that the above corollary also follows from Theorem \ref{maclagansmithgenerators} since $\reg(I)=\reg(\bigin(I))$ under the assumption that $I$ defines an empty variety in $\mathbb{P}^n \times \mathbb{P}^m$.}
\end{remark}

As mentioned before, unlike in the single graded case, the region $\reg(I)$ does not yield a sharp description of the bidegrees of a minimal set of generators of $\bigin(I)$. We illustrate it with the following example where we observe that the converse of Corollary \ref{emptyimplication} does not hold and also that $\reg(I)$ does not yield any information for infinitely many bidegrees.   

\begin{example}
\label{examplemat2}

\begin{figure}[h]
    \centering
    \begin{tikzpicture}[thick,scale=0.65, every node/.style={transform shape}]

\filldraw[red] (1,1) node[anchor=north] {};
\filldraw[red] (1,2) node[anchor=north] {};
\filldraw[green] (1,3) circle (3pt) node[anchor=north] {\color{red} };
\filldraw[black] (1,4) circle (3pt) node[anchor=north] {};
\filldraw[black] (1,5) circle (3pt) node[anchor=north] {};
\filldraw[black] (1,6) circle (3pt) node[anchor=north] {};

\filldraw[red] (2,1) node[anchor=north] {};
\filldraw[red] (2,2) node[anchor=north] {};
\filldraw[black] (2,3)   node[anchor=north] {\color{red} };
\filldraw[black] (2,4)   node[anchor=north] {\color{red} };
\filldraw[black] (2,5)  node[anchor=north] {\color{red} };
\filldraw[red] (2,6) node[anchor=north] {};

\filldraw[green] (3,1) circle (3pt) node[anchor=north] {\color{red} };
\filldraw[black] (3,2) circle (3pt) node[anchor=north] {};
\filldraw[black] (3,3) circle (3pt)  node[anchor=north] {\color{red} };
\filldraw[black] (3,4) circle (3pt)  node[anchor=north] {\color{red} };
\filldraw[black] (3,5) circle (3pt)  node[anchor=north] {\color{red} };
\filldraw[white] (3,5) circle (2pt)  node[anchor=north] {\color{red} };
\filldraw[red] (3,6) node[anchor=north] {};

\filldraw[black] (4,1)  node[anchor=north] {\color{red} };
\filldraw[black] (4,2) circle (3pt) node[anchor=north] {};
\filldraw[black] (4,3) circle (3pt)  node[anchor=north] {\color{red} };
\filldraw[red] (4,4) node[anchor=north] {};
\filldraw[red] (4,5) node[anchor=north] {};
\filldraw[red] (4,6) node[anchor=north] {};

\filldraw[black] (5,1) node[anchor=north] {\color{red} };
\filldraw[black] (5,2) circle (3pt) node[anchor=north] {};
\filldraw[black] (5,3) circle (3pt)  node[anchor=north] {\color{red} };
\filldraw[white] (5,3) circle (2pt)  node[anchor=north] {\color{red} };
\filldraw[red] (5,4) node[anchor=north] {};
\filldraw[red] (5,5) node[anchor=north] {};
\filldraw[red] (5,6) node[anchor=north] {};

\filldraw[black] (6,1)node[anchor=north] {\color{red} };
\filldraw[red] (6,2) node[anchor=north] {};
\filldraw[red] (6,3) node[anchor=north] {};
\filldraw[red] (6,4) node[anchor=north] {};
\filldraw[red] (6,5) node[anchor=north] {};
\filldraw[red] (6,6) node[anchor=north] {};

\draw[thick,->] (0,0) -- (6.5,0) node[anchor=north west] {};

\filldraw[red] (5,6) node[anchor=north] {$\reg(I)$};

\draw[thick,->] (0,0) -- (0,6.5) node[anchor=south east] {};
\foreach \x in {,1,2,3,4,5,6}
 \draw (\x,1pt) -- (\x,-1pt) node[anchor=north] {$\x$};
\foreach \y in {0,1,2,3,4,5,6}
    \draw (1pt,\y cm) -- (-1pt,\y cm) node[anchor=east] {$\y$};

\fill[color=red, opacity=0.2] (3,5) rectangle (5,6.5) ;

\fill[color=red, opacity=0.2] (5,3) rectangle (6.5,5) ;

\fill[color=red, opacity=0.2] (5,5) rectangle (6.5,6.5) ;

\draw[line width = 0.5mm,color=red,-] (3,6.5) -- (3,5) -- (5,5) -- (5,3) -- (6.5,3) node[anchor=south east] {};

\draw[line width = 0.5mm,color=brown,-] (6.5,2.5) -- (0.4,2.5) -- (0.4,1.5) -- (6.5,1.5) node[anchor=south east] {};

\draw[line width = 0.5mm,color=blue,-] (0.5,6.5) -- (0.5,0.5) -- (1.5,0.5) -- (1.5,6.5) node[anchor=south east] {};

\end{tikzpicture}
     \caption{The green dots \textcolor{green}{$\bullet$} represent the bidegrees $(a,b)$ of the generators of the ideal $I$ in Example \ref{examplemat2}. The black dots $\bullet$ represent the bidegrees $(a,b)$ of minimal generators of $\bigin(I)$ and the white dots those bidegrees for which $\HF_{S/I}(a,b) = 0$. The region $\reg(I)$ is marked in \textcolor{red}{red}. In \textcolor{blue}{blue} (resp. \textcolor{brown}{brown}), an infinite column (resp. a row) which does not intersect $\reg(I)$.}
    \label{fig:examplemat}
\end{figure}
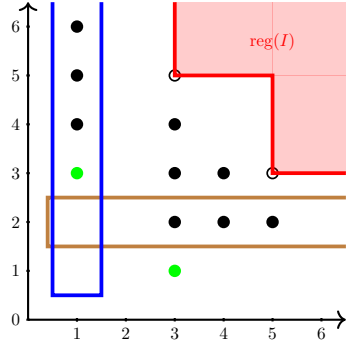
Consider the standard $\mathbb{Z}^2$-graded ring $S = \mathbb{C}[x_0,x_1,y_0,y_1]$ and the ideal $I \subset S$ generated by four bihomogeneous polynomials: 
$$ p_3(x_0,x_1)q_1(y_0,y_1), p_3'(x_0,x_1)q_1'(y_0,y_1), p_1(x_0,x_1)q_3(y_0,y_1),p_1'(x_0,x_1)q_3'(y_0,y_1),$$
where $p_i$'s and $p_i'$'s \ccc{are} general forms of degree $i$ in $x_0,x_1$ and $q_i$'s and $q_i'$'s \ccc{are} general forms of degree $i$ in $y_0,y_1$. The ideal $I$ defines an empty variety. In Figure \ref{fig:examplemat} we show the bidegrees of a minimal set of generators for $\bigin(I)$, as well the region $\reg(I)$.

At the bidegrees $(3,5)$ and $(5,3)$, Theorem~\ref{hilbertfunctiontheorem} applies so that there are minimal generators of $\bigin(I)$. The bidegree $(4,4)$, where $\HF_{S/I}(4,4) \neq 0$, so that $(4,4) \notin \reg(I)$, shows that the converse of Corollary \ref{emptyimplication} does not hold. Also, in Figure \ref{fig:examplemat} we highlighted an infinite column and an infinite row that does not intersect $\reg(I)$.
\end{example}

It turns out that if we do not restrict to ideals defining an empty variety, then even Corollary~\ref{emptyimplication} is no longer true, i.e.~there might be bidegrees $(a,b) \in \reg(I)$ such that there are minimal generators of $\bigin(I)$ in  degree $\gneq (a,b)$. 
\begin{example}
\label{example3}
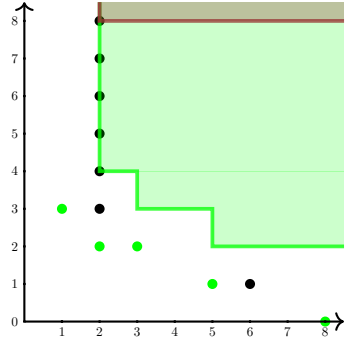
\begin{figure}[h]
    \centering

    \begin{tikzpicture}[thick,scale=0.5, every node/.style={transform shape}]

\filldraw[green] (1,3) circle (3pt) node[anchor=north] {};
\filldraw[green] (2,2) circle (3pt) node[anchor=north] {};
\filldraw[green] (3,2) circle (3pt) node[anchor=north] {};
\filldraw[green] (5,1) circle (3pt) node[anchor=north] {};
\filldraw[green] (8,0) circle (3pt) node[anchor=north] {};

\filldraw[black] (6,1) circle (3pt) node[anchor=north] {};

\filldraw[black] (2,3) circle (3pt) node[anchor=north] {};

\filldraw[black] (2,4) circle (3pt) node[anchor=north] {};
\filldraw[black] (2,5) circle (3pt) node[anchor=north] {};
\filldraw[black] (2,6) circle (3pt) node[anchor=north] {};
\filldraw[black] (2,7) circle (3pt) node[anchor=north] {};
\filldraw[black] (2,8) circle (3pt) node[anchor=north] {};

\filldraw[red] (4,4) node[anchor=north] {};
\filldraw[red] (4,5) node[anchor=north] {};
\filldraw[red] (4,6) node[anchor=north] {};
\filldraw[black] (5,1) node[anchor=north] {\color{red} };
\filldraw[white] (5,3) circle (2pt)  node[anchor=north] {\color{red} };
\filldraw[red] (5,5) node[anchor=north] {};
\filldraw[red] (5,6) node[anchor=north] {};
\filldraw[black] (6,1)node[anchor=north] {\color{red} };
\filldraw[red] (6,2) node[anchor=north] {};
\filldraw[red] (6,3) node[anchor=north] {};
\filldraw[red] (6,4) node[anchor=north] {};
\filldraw[red] (6,5) node[anchor=north] {};

\draw[thick,->] (0,0) -- (8.5,0) node[anchor=north west] {};



\draw[line width = 0.5mm,opacity=0.75,color=green,-]  (8.5,2) -- (5,2) -- (5,3) -- (3,3) -- (3,4) -- (2,4) -- (2,8.5) node[anchor=south east] {};

\draw[line width = 0.5mm,opacity=0.75,color=purple,-]  (8.5,8) -- (2,8) -- (2,8.5) node[anchor=south east] {};

\draw[thick,->] (0,0) -- (0,8.5) node[anchor=south east] {};
\foreach \x in {,1,2,3,4,5,6,7,8}
 \draw (\x,1pt) -- (\x,-1pt) node[anchor=north] {$\x$};

 \fill[color=purple, opacity=0.3] (2,8) rectangle (8.5,8.5) ;

 \fill[color=green, opacity=0.2] (3,3) rectangle (8.5,4) ;

 \fill[color=green, opacity=0.2] (2,4) rectangle (8.5,8.5) ;

 \fill[color=green, opacity=0.2] (5,2) rectangle (8.5,3) ;




\foreach \y in {0,1,2,3,4,5,6,7,8}
   \draw (1pt,\y cm) -- (-1pt,\y cm) node[anchor=east] {$\y$};

\end{tikzpicture}

    \caption{The multigraded Castelnuovo-Mumford regularity $\reg(I)$, in \textcolor{green}{green} and the multigraded Castelnuovo-Mumford regularity of the bigeneric initial ideal $\reg(\bigin(I))$, in \textcolor{purple}{purple}.}
    \label{fig:examplebruce}
\end{figure}   
We continue with Example \ref{ex:2-diff-orders}. We note that $(2,4) \in \reg(I)$, nevertheless there are generators of bidegrees $\gneq (2,4)$; see Figure \ref{fig:examplebruce}.
This example also illustrates that, in general, we cannot expect that $\reg(I)$ and $\reg(\bigin(I))$ to coincide, even though we have an inclusion, as in Eq.~\eqref{inclusion}. For this reason, there is no straightforward way of using $\reg(I)$ in order to \ccc{provide a bounding region} to the bidegrees of the minimal generators of $\bigin(I)$.
\end{example}

\begin{remark}
    To compute the regions in these examples, we used the {\tt Macaulay2}  packages \texttt{VirtualResolutions}~\cite{virtualresolutionsm2} and \texttt{LinearTruncations} \cite{lineartrunc}. In both packages, the input is assumed to be saturated, which is the case of the ideal $I$ in Example \ref{ex:2-diff-orders}.
\end{remark}



\section{The partial regularity region and its main properties}
\label{sec:partial-BS}

The results and examples presented in Section \ref{sec:first_examples} illustrate the difficulty to establish a direct bihomogeneous analogue of Eq.~\eqref{eq:homo-eq-dgGB-reg} by means of the bigraded Castelnuovo-Mumford regularity region. To unravel this situation, we introduce a new region, denoted by $\xreg(I)$ and explore its properties. Compared to the Castelnuovo-Mumford regularity region, which relies on the vanishing of local cohomology modules with respect to the irrelevant ideal $\mathfrak{b}$ of the product of two projective spaces, this new region relies on the vanishing of local cohomology modules with respect to the ideal $\mx$. This construction is inspired by the work of Botbol, Chardin and Holanda \cite{botbol2012castelnuovo, chardinholanda}.

\begin{definition}
\label{def:partial-regularity}
Let $I \subset S$ be a bihomogeneous ideal. We denote by $\xreg(I)$, and call it the \textit{partial regularity region}, the region of bidegrees $(a,b) \in \mathbb{Z}^2$ such that for all $i \geq 1$ and $(a',b') \geq (a - i + 1, b)$,
$$ H_{\mx}^i(I)_{(a',b')} = 0.$$ 
\end{definition} 

One of the main results of our work is that, in generic coordinates, $\xreg(I)$ provides a bounding region to bidegrees of the elements in a minimal Gr\"obner basis which is (partially) tight. This result will be proved in Section \ref{sec::5}. In order to do so, in this section we need to establish two key properties of $\xreg(I)$. First, in \ccc{Theorem \ref{theoremcriterion}}, we present a criterion to \ccc{characterize} $\xreg(I)$ similar to the one proposed by Bayer and Stillman in the classical setting \cite[Theorem 1.10]{bayer_criterion_1987}. 
Second, in Theorem \ref{main-theo-sec-4}, we show that, in generic coordinates, the partial regularity region of an ideal and its bigeneric initial ideal agree, that is, $\xreg(I) = \xreg(\bigin(I))$.
The results on this section generalize the ones in \cite[\S 1]{bayer_criterion_1987} and most of the proofs follow similar strategies.

\subsection{A criterion for the partial regularity region}








\begin{notation} Let $I \subset S$ be a bihomogeneous ideal.
\begin{itemize}
    \item[-] We denote by $I^{\xsat}$ the saturation of $I$ with respect to $\mx$, i.e. $(I:\mathfrak{m}_{x}^{\infty})$.
    \item[-] Given any polynomial $f \in S$, $(I,f)$ will denote the sum of the ideals $I$ and $(f)$.
    \item[-] We will denote by $\mathbb{Z}^2_{\succsim 0}$ the product $\mathbb{Z}_{>0} \times \mathbb{Z}_{\geq 0}$. 
\end{itemize}
\end{notation}


\begin{lemma}
\label{lemmasaturated}
    A generic linear $x$-form $h$ is not a zero divisor in $S/I^{\xsat}$. Namely, $(I^{\xsat}:h) = I^{\xsat}$.
\end{lemma}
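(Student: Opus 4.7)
The plan is to argue that a generic linear $x$-form avoids all associated primes of $S/I^{\xsat}$, hence acts as a non-zero-divisor. The argument proceeds in three steps.

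First, I would observe that no associated prime of $S/I^{\xsat}$ contains $\mx$. Indeed, if $P \in \Ass(S/I^{\xsat})$ contained $\mx$, then $P = (I^{\xsat} : f)$ for some bihomogeneous $f \in S \setminus I^{\xsat}$, so that $\mx \cdot f \subseteq I^{\xsat}$. But then $f \in (I^{\xsat} : \mx^\infty) = I^{\xsat}$, since $I^{\xsat}$ is already saturated with respect to $\mx$ (saturation is idempotent). This contradicts $f \notin I^{\xsat}$.

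Second, the associated primes of a bihomogeneous ideal are themselves bihomogeneous, and there are only finitely many of them; let them be $P_1, \dots, P_r$. For each $i$, the intersection $P_i \cap S_{(1,0)}$ is a $\mathbf{k}$-vector subspace of the finite-dimensional space $S_{(1,0)}$. It is a \emph{proper} subspace: otherwise $S_{(1,0)} \subseteq P_i$ would imply $\mx = (S_{(1,0)}) \subseteq P_i$, contradicting the first step.

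Finally, since $\mathbf{k}$ has characteristic zero (in particular is infinite), the finite union $\bigcup_{i=1}^r (P_i \cap S_{(1,0)})$ of proper $\mathbf{k}$-linear subspaces is a proper subset of $S_{(1,0)}$, and its complement is a Zariski-open dense subset. Any linear $x$-form $h$ in this complement lies in no associated prime of $S/I^{\xsat}$, and is therefore a non-zero-divisor on $S/I^{\xsat}$. This is precisely the statement $(I^{\xsat} : h) = I^{\xsat}$.

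I do not expect any serious obstacle: the argument is a straightforward bigraded adaptation of the classical prime-avoidance argument for saturated ideals, the only minor care being that associated primes in the bigraded setting are bihomogeneous so that intersecting them with $S_{(1,0)}$ yields $\mathbf{k}$-linear subspaces to which the infinite-field avoidance applies.
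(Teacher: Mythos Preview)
Your argument is correct and is precisely the standard prime-avoidance argument one expects here. The paper does not spell out a proof at all but simply refers to \cite[Lemma 3.3]{vantuyl}, whose argument is the same one you give: no associated prime of $S/I^{\xsat}$ can contain $\mx$ by idempotence of saturation, and then a generic linear $x$-form avoids the finitely many (bihomogeneous) associated primes by linear prime avoidance over an infinite field.
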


\begin{proof}
    The proof follows using the same argument as \cite[Lemma 3.3]{vantuyl}.
\end{proof}



The following lemma shows that local cohomology modules with respect to $\mx$ vanish when the degree with respect to the $x$ variables is big enough.


\begin{lemma}
\label{vanishingoflocalcohomology}
Let $I \subset S$ be a bihomogeneous ideal. Then there is $a_0 \in \mathbb{Z}$ such that for all $b \in \mathbb{Z}$, it holds 
    $$H^i_{\mx}(I)_{(a,b)} = 0 \quad 
   \forall a \geq a_0. 
    $$
\end{lemma}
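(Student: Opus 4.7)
The plan is to prove, by induction on projective dimension, the slightly stronger statement that for any finitely generated bigraded $S$-module $M$ there exists $a_0 \in \mathbb{Z}$ such that $H^i_{\mx}(M)_{(a,b)} = 0$ for every $i \geq 0$, every $a \geq a_0$, and every $b \in \mathbb{Z}$. The induction is finite thanks to the Hilbert syzygy theorem, which bounds the projective dimension of $M$ over the polynomial ring $S$, and the conclusion for the ideal $I$ follows by specialization.

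For the base case, when $M$ is free, I would write $M = \bigoplus_{j=1}^r S(-a_j,-b_j)$ as a finite direct sum of shifted copies of $S$. Applying (a shifted version of) Remark~\ref{rem:lcohomo-when-I-eq-S} to each summand gives $H^i_{\mx}(M) = 0$ for $i \neq n+1$, and $H^{n+1}_{\mx}(M)_{(a,b)} = 0$ as soon as $a > \max_j a_j - n - 1$; crucially, this bound on $a$ does not involve $b$. For the inductive step, I would pick a bigraded short exact sequence $0 \to K \to F \to M \to 0$ with $F$ free and finitely generated, so that $K$ is also finitely generated and of strictly smaller projective dimension. The induction hypothesis produces uniform vanishing thresholds $a_F$ and $a_K$, and the piece
$$H^i_{\mx}(F) \longrightarrow H^i_{\mx}(M) \longrightarrow H^{i+1}_{\mx}(K)$$
of the long exact sequence in local cohomology then forces $H^i_{\mx}(M)_{(a,b)} = 0$ whenever $a \geq \max(a_F, a_K)$, uniformly in $b$. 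Setting $a_0$ to this maximum concludes the inductive step, and iterating down to the base case completes the proof.

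The only real subtlety is keeping the bound uniform in $b$. A more elementary slice-by-slice approach, viewing $I_{(*,b)}$ as a finitely generated graded $\mathbf{k}[x_0,\dots,x_n]$-module and invoking the classical single-graded vanishing of local cohomology in high $x$-degree, does yield a threshold on $a$ for each fixed $b$, but this threshold depends a priori on $b$ with no obvious control as $b \to \infty$. Working with a bigraded free resolution over $S$ (rather than over the subring generated by the $x$ variables) bypasses this issue precisely because the finite list of $x$-shifts $a_j$ appearing in the resolution controls every $y$-slice simultaneously.
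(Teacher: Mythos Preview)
Your proof is correct. Both the paper and you reduce to the explicit computation of $H^i_{\mx}(S)$ from Remark~\ref{rem:lcohomo-when-I-eq-S} via a bigraded free resolution; the only difference is the homological bookkeeping. The paper assembles the \v{C}ech complex and a minimal free resolution $F_\bullet$ into the double complex $\mathcal{C}_{\mx}^{\bullet}(F_\bullet)$ and compares its two spectral sequences, so that the vanishing of the $H^i_{\mx}(F_q)$ in high $x$-degree forces the same for $H^i_{\mx}(I)$. You instead walk through the resolution one short exact sequence at a time and use the long exact sequence in local cohomology. Your route is more elementary (no spectral sequences) and is phrased for arbitrary finitely generated bigraded $S$-modules; the paper's route packages all cohomological indices simultaneously at the cost of heavier machinery. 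Either way, the uniform bound in $b$ comes from the same source, namely the finite list of $x$-shifts in a free resolution over $S$, exactly as you note in your final paragraph.
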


\begin{proof}

It is classical property that the local cohomology modules $H^i_{\mx}(I)$ can be defined and computed using the \v{C}ech complex $\mathcal{C}_{\mx}^{\bullet}(I)$. We refer the reader to \cite{broadmann} for more details on the construction of this complex and its main properties. In this proof, we will use this complex, together with a minimal free resolution $F_{\bullet}$ of $I$, to construct a double complex that we denote by $\mathcal{C}_{\mx}^{\bullet}(F_{\bullet})$. We note that this double complex has often been used in the bibliography; see, for example, \cite[\S 2]{Boua_Hassanzadeh_2019}.

There are two natural spectral sequences associated with the double complex $\mathcal{C}_{\mx}^{\bullet}(F_{\bullet})$, depending on whether we consider the filtrations with respect to the horizontal or the vertical maps. Both sequences converge to the same limit \cite[Section 1.2]{weibel1994introduction}. Considering the filtration given by the horizontal maps, since $F_{\bullet}$ is a minimal free resolution of $I$ we deduce that the spectral sequence converges to $H^{\bullet}_{\mx}(I)$ in its second page. 

Similarly, the second spectral sequence has the terms $H^i_{\mx}(F_q)$ in its first page. These terms are direct sums of $H^i_{\mx}(S)$, up to the shifts appearing in the minimal free resolution $F_{\bullet}$. \ccc{Using Eq.~\eqref{localcohomologyx}, we deduce that there exists $a_0 \in \mathbb{Z}$ such that for all $b \in \mathbb{Z}$,
$$H^i_{\mx}(F_q)_{(a',b')} = 0 \text{ for all }i,q \text{ and } (a',b') \geq (a_0,b) .$$}
\ccc{Since both spectral sequences converge to the same limit, we deduce the isomorphism:
$$H_{\mx}^q(I)_{(a',b')} \cong \ker \oplus_{p}(H^{p+i}_{\mx}(F_p) \xrightarrow[]{} H^{p+i}_{\mx}(F_{p-1}))_{(a',b)} = 0$$ for all $i,q$ and $(a',b') \geq (a_0,b)$.}
\end{proof}



\begin{lemma}
\label{lemmageneric}
    Let $I\subset S$ be a bihomogeneous ideal. Let $h$ be generic linear $x$-form and $(a,b) \in \mathbb{Z}_{\geq 0}^2$. Then, the following are equivalent:
    \begin{itemize}
        \item[i)]  $(I:h)_{(a',b')} = I_{(a',b')}$ for all $(a',b') \geq (a,b)$.
        \item[ii)] $H_{\mx}^1(I)_{(a',b')} = 0$ for all $(a',b') \geq (a,b)$.
    \end{itemize}
\end{lemma}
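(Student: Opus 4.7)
The plan is to bridge (i) and (ii) through the intermediate condition that $I^{\xsat}_{(a',b')} = I_{(a',b')}$ for every $(a',b') \geq (a,b)$.

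First I would show that (ii) is equivalent to this intermediate condition by invoking the long exact sequence in $\mx$-local cohomology associated to $0 \to I \to S \to S/I \to 0$. Since $S$ is $\mx$-torsion-free, $H^0_{\mx}(S) = 0$; and by Remark~\ref{rem:lcohomo-when-I-eq-S} the support of $H^{\bullet}_{\mx}(S)$ lies in the region with first coordinate $\leq -n-1$, so $H^1_{\mx}(S)_{(a',b')} = 0$ whenever $(a',b') \geq (a,b)$ (here we use $(a,b) \in \mathbb{Z}_{\geq 0}^2$). This yields
\[ H^1_{\mx}(I)_{(a',b')} \cong H^0_{\mx}(S/I)_{(a',b')} = (I^{\xsat}/I)_{(a',b')} \quad \text{for all } (a',b') \geq (a,b), \]
from which the desired equivalence is immediate. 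The direction (ii)$\Rightarrow$(i) then follows from the sandwich $I \subseteq (I:h) \subseteq I^{\xsat}$, where the right inclusion uses Lemma~\ref{lemmasaturated}: since $h$ is regular on $S/I^{\xsat}$, $hf \in I \subseteq I^{\xsat}$ forces $f \in I^{\xsat}$.

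The converse (i)$\Rightarrow$(ii) is the substantive direction and proceeds in two steps. I would first prove by induction on $k$ that $(I:h^k)_{(a',b')} = I_{(a',b')}$ for all $k \geq 1$ and all $(a',b') \geq (a,b)$: if $f \in (I:h^{k+1})_{(a',b')}$, then $hf$ has bidegree $(a'+1,b') \geq (a,b)$ and $hf \in (I:h^k)$, so by the inductive hypothesis $hf \in I$, and then $f \in (I:h)_{(a',b')} = I_{(a',b')}$ by assumption. To conclude $I^{\xsat}_{(a',b')} \subseteq I_{(a',b')}$, observe that any $f \in I^{\xsat}$ satisfies $\mx^M f \subseteq I$ for some $M$; since $h \in \mx$ we have $h^M \in \mx^M$, so $h^M f \in I$, that is, $f \in (I:h^M) = I$ in the specified bidegrees.

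The main obstacle, and the point that must be handled carefully, is the induction step: one must exploit that multiplication by a linear $x$-form shifts the bidegree only in the first coordinate, so the inductive hypothesis continues to apply. This also clarifies why $h$ is required to be an $x$-form and why the definition of $\xreg(I)$ is designed so that the admissible region $(a',b') \geq (a,b)$ is stable under translation in the $x$-direction.
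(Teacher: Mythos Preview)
Your proof is correct and essentially parallels the paper's for the identification $H^1_{\mx}(I)_{(a',b')}\cong (I^{\xsat}/I)_{(a',b')}$ and for the direction (ii)$\Rightarrow$(i). The genuine difference is in (i)$\Rightarrow$(ii): the paper first invokes Lemma~\ref{vanishingoflocalcohomology} (proved via a spectral sequence on the double complex $\mathcal{C}^\bullet_{\mx}(F_\bullet)$) to know that $I^{\xsat}_{(a'+\lambda,b')}=I_{(a'+\lambda,b')}$ for $\lambda\gg 0$, and then descends on the minimal such $\lambda_0$ using a single division by $h$. Your argument replaces this by the direct induction $(I:h^k)_{\geq(a,b)}=I_{\geq(a,b)}$ together with the observation $I^{\xsat}\subseteq\bigcup_k(I:h^k)$, which yields the same conclusion without ever appealing to Lemma~\ref{vanishingoflocalcohomology}. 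This is more elementary and self-contained; the paper's route, on the other hand, fits into the broader pattern (used again in Lemma~\ref{lemmainduction}) of exploiting eventual vanishing in the $x$-direction and then propagating downward. A minor remark: your use of Remark~\ref{rem:lcohomo-when-I-eq-S} to get $H^1_{\mx}(S)_{(a',b')}=0$ for $(a',b')\in\mathbb{Z}_{\geq 0}^2$ is fine, but since $H^i_{\mx}(S)=0$ for $i\neq n+1$ you actually have $H^1_{\mx}(S)=0$ identically once $n\geq 1$, which is how the paper phrases it.
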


\begin{proof}
Since $n\geq 1$, we have that $H_{\mx}^1(I) \cong H_{\mx}^0(S/I) = I^{\xsat}/I$; see for example \cite[Remark 1.3]{bayer_criterion_1987}. Hence, $H_{\mx}^1(I)_{(a',b')} = 0$, if and only if, $I^{\xsat}_{(a',b')} = I_{(a',b')}$.
By  Lemma \ref{lemmasaturated}, as $h$ is a generic $x$-form, condition $ii)$ implies condition $i)$ as in this case we have that, for every $(a',b') \geq (a,b)$
$$ (I:h)_{(a',b')} = (I^{\xsat}:h)_{(a',b')} = I^{\xsat}_{(a',b')} = I_{(a',b')}.$$

To prove the opposite implication, we observe that by Lemma \ref{vanishingoflocalcohomology}, for a given bidegree $(a,b) \in \mathbb{Z}^2$, there is a $\lambda_0 \in \mathbb{Z}_{\geq 0}$ such that, for every $\lambda \geq \lambda_0$, we have that
$$I_{(a' + \lambda,b')} = I^{\xsat}_{(a' + \lambda,b')} \text{ for every } (a',b') \geq (a,b).$$
Either the previous condition holds for every $\lambda_0$ and so $H^1_{\mx}(I)_{(a',b')} = 0$ for all $(a',b') \geq (a,b)$, or there is a minimal $\lambda_0$ satisfying the previous condition.
In the latter case,
by minimality of $\lambda_0$, we have that
$I_{(a' + \lambda_0 - 1,b')} \neq I^{\xsat}_{(a' + \lambda_0 -1,b')}$ for some $(a',b') \geq (a,b)$. 
Therefore, there must be bihomogeneous $f \in I^{\xsat}$ of bidegree $(a' +\lambda_0 - 1,b')$ such that $f \notin I$.
However, as $I^{\xsat}_{(a' + \lambda_0,b')} = I_{(a' + \lambda_0,b')}$, 
for every $x$-form $h \in S_{(1,0)}$, we have that $h \, f \in I_{(a' + \lambda_0,b')}$ and so $f \in (I:h)_{(a' + \lambda_0 - 1,b')}$.
If $\lambda_0 \geq 1$, condition $i)$ implies that $(I:h)_{(a' + \lambda_0 - 1,b')} = I_{(a' + \lambda_0 - 1,b')}$, so we get a contradiction as 
$f \not\in I$. 
Hence $\lambda_0 \leq 0$ and so $H_{\mx}^1(I)_{(a',b')} = 0$ for every $(a',b') \geq (a,b)$. 
\end{proof}

The following lemma shows that the partial regularity region can be computed recursively using colon ideals with respect to generic linear $x$-forms, and \ccc{is a bigraded analogue of \cite[Theorem 1.8]{bayer_criterion_1987}. In the proof we use the results of Chardin and Holanda \cite{chardinholanda}.} 

\begin{lemma}
\label{lemmainduction}
   Let $h$ be a generic linear $x$-form and $(a,b) \in \mathbb{Z}_{\geq 0}^2$, then the following are equivalent.
    \begin{itemize}
        \item[i)] $(a,b) \in \xreg(I)$.
        \item[ii)] $(I:h)_{(a',b')} = I_{(a',b')}$ for $(a',b') \geq (a,b)$ and $(a,b) \in \xreg(I,h)$.
    \end{itemize}
\end{lemma}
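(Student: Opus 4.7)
The plan is to adapt the Bayer--Stillman argument \cite[Lemma~1.8]{bayer_criterion_1987} to the bigraded setting. Write $K = (I:h)$ and $J = (I, h)$. Multiplication by $h$ induces an isomorphism $(S/K)(-1, 0) \xrightarrow{\sim} J/I$, so I would apply the functor $H^\bullet_{\mx}$ to the short exact sequence $0 \to I \to J \to J/I \to 0$ and exploit the resulting long exact sequence at each bidegree. The terms involving $J/I$ can be rewritten purely in terms of $I$: since $K \subseteq I^{\xsat}$, Lemma~\ref{vanishingoflocalcohomology} implies $K = I$ in bidegrees $(a', b')$ with $a' \gg 0$, so the finitely generated module $K/I$ is $\mx$-torsion and its higher local cohomology vanishes. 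The long exact sequence of $0 \to I \to K \to K/I \to 0$ then yields $H^j_{\mx}(K) \cong H^j_{\mx}(I)$ for $j \geq 2$. Combining this with the identification $H^{j-1}_{\mx}(S/K) \cong H^j_{\mx}(K)$ coming from $0 \to K \to S \to S/K \to 0$ (valid because $H^\bullet_{\mx}(S)$ is concentrated in cohomological degree $n+1$), I obtain the key shift
\[
H^{j-1}_{\mx}(J/I)_{(a', b')} \cong H^j_{\mx}(I)_{(a' - 1, b')} \qquad \text{for } j \geq 2,
\]
and similarly $H^j_{\mx}(J/I)_{(a', b')} \cong H^{j+1}_{\mx}(I)_{(a' - 1, b')}$ for $j \geq 1$.

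For $(i) \Rightarrow (ii)$, the condition $(I:h)_{(a', b')} = I_{(a', b')}$ for $(a', b') \geq (a, b)$ is immediate from Lemma~\ref{lemmageneric} applied to the $i = 1$ vanishing in $(a, b) \in \xreg(I)$. To verify $(a, b) \in \xreg(I, h)$, fix $i \geq 1$ and $(a', b') \geq (a - i + 1, b)$; the relevant portion of the long exact sequence reads
\[
H^i_{\mx}(I)_{(a', b')} \to H^i_{\mx}(J)_{(a', b')} \to H^{i+1}_{\mx}(I)_{(a' - 1, b')}.
\]
The outer terms vanish by $(a, b) \in \xreg(I)$: the left one directly, and the right one because $(a' - 1, b') \geq (a - i, b)$ meets the range of the $(i + 1)$-th clause. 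Exactness then forces $H^i_{\mx}(J)_{(a', b')} = 0$.

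For $(ii) \Rightarrow (i)$, the $i = 1$ case is again Lemma~\ref{lemmageneric}. For $i \geq 2$ I proceed by descending induction on $a'$, the base case $a' \gg 0$ being Lemma~\ref{vanishingoflocalcohomology}. In the inductive step, fix $a' \geq a - i + 1$ and assume $H^i_{\mx}(I)_{(a' + 1, b')} = 0$. Substituting the identification above into the long exact sequence at bidegree $(a' + 1, b')$ gives
\[
H^{i-1}_{\mx}(J)_{(a' + 1, b')} \to H^i_{\mx}(I)_{(a', b')} \to H^i_{\mx}(I)_{(a' + 1, b')} = 0.
\]
Since $(a' + 1, b') \geq (a - (i - 1) + 1, b)$, the hypothesis $(a, b) \in \xreg(I, h)$ forces $H^{i-1}_{\mx}(J)_{(a' + 1, b')} = 0$, and exactness then yields $H^i_{\mx}(I)_{(a', b')} = 0$.

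The main obstacle is careful bookkeeping of the degree shifts: the $(-1, 0)$ shift appearing in the identification of $H^\bullet_{\mx}(J/I)$ with $H^\bullet_{\mx}(I)$ must align exactly with the $-i + 1$ shift built into the definition of $\xreg$, and this compatibility is precisely what makes the descending induction on $a'$ in $(ii) \Rightarrow (i)$ terminate at $a' = a - i + 1$ and not sooner. A secondary point requiring care is the boundary index $i = n + 1$, where $H^{n+1}_{\mx}(S)$ is nonzero and the identification $H^{j-1}_{\mx}(S/K) \cong H^j_{\mx}(K)$ degenerates into an injection; however, an injection is still sufficient to transfer the relevant vanishing in both implications.
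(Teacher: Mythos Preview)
Your approach is essentially the paper's: both use the long exact sequence in $H^\bullet_{\mx}$-cohomology linking $I$, $K=(I:h)$, and $J=(I,h)$, together with the identification $H^j_{\mx}(K) \cong H^j_{\mx}(I)$ for $j\geq 2$, and then descend via Lemma~\ref{vanishingoflocalcohomology}. The paper works directly with $0 \to K(-1,0) \to I \oplus (h) \to J \to 0$ and invokes Lemma~\ref{lemmachardinholandasupports} for the identification; you instead use $0 \to I \to J \to (S/K)(-1,0) \to 0$ and argue via $K/I \subseteq I^{\xsat}/I$ being $\mx$-torsion, which is a slightly more direct route to the same place.

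There is, however, a slip in your boundary discussion. You correctly note that at $j = n+1$ the map $H^n_{\mx}(S/K) \to H^{n+1}_{\mx}(K)$ is only an injection, but the claim that an injection ``is still sufficient to transfer the relevant vanishing in both implications'' is wrong for $(ii)\Rightarrow(i)$ at $i = n+1$. There you need $H^{n+1}_{\mx}(I)_{(a',b')} = 0$, which is the \emph{target} of that injection, and knowing the source $H^n_{\mx}(J/I)_{(a'+1,b')}$ vanishes tells you nothing about the target. The repair is easy---the cokernel of the injection embeds in $H^{n+1}_{\mx}(S)_{(a',b')}$, which is zero for $a' \geq -n$, and $a' \geq a-n \geq -n$ since $a\geq 0$---but it is a genuine gap in the argument as written, not just bookkeeping. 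A similar point is unaddressed in $(i)\Rightarrow(ii)$ at $i = n+1$: the third term is $H^{n+1}_{\mx}(J/I)_{(a',b')} = H^{n+1}_{\mx}(S/K)_{(a'-1,b')}$, which does not inject into $H^{n+2}_{\mx}(I)=0$ but is instead a quotient of $H^{n+1}_{\mx}(S)_{(a'-1,b')}$; this again vanishes in the needed range once $a\geq 1$. (The paper's proof tacitly glosses over the analogous $H^{n+1}_{\mx}((h))$ summand in its exact sequence.)
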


\begin{proof}
    First, we observe that, 
    if $(I:h)_{\geq (a,b)} = I_{\geq (a,b)}$, then for every $i\geq 1$,
    $ H_{\mx}^i((I:h)_{\geq (a,b)}) = H_{\mx}^i(I_{\geq (a,b)}).$
    Under this assumption, Lemma \ref{lemmachardinholandasupports} implies that
   \begin{equation}
    \label{equation1}
        H_{\mx}^1(I:h)_{(a',b')} = H_{\mx}^1(I)_{(a',b')} \text{ for every }  (a',b') \geq (a,b).
    \end{equation}
    and
\begin{equation}
    \label{equation2}
        H_{\mx}^i(I:h) = H_{\mx}^i(I) \text{ for every }i \geq 2.
    \end{equation}
    To prove that $i)$ implies $ii)$, let $(a,b) \in \xreg(I)$. By Lemma \ref{lemmageneric}, we get that $(I:h)_{\geq (a,b)} = I_{\geq (a,b)}$, so that Eq.~\eqref{equation1} and Eq.~\eqref{equation2} hold.
    Consider the short exact sequence which is induced by the multiplication by $h$,
    \begin{equation}
    \label{equation3}
        0 \xrightarrow[]{} (I:h)(-(1,0)) \xrightarrow[]{} I \oplus (h) \xrightarrow[]{} (I,h) \xrightarrow[]{} 0.
    \end{equation}
    For every $i\geq 1$, taking the graded components of the corresponding long exact sequence of local cohomology at degrees $(a',b') \geq (a - (i-1),b)$ yields  
\begin{equation}\label{equation4} \cdots \xrightarrow[]{} H^{i}_{\mx}(I)_{(a',b')} \xrightarrow[]{} H^i_{\mx}(I,h)_{(a',b')} \xrightarrow[]{} H^{i+1}_{\mx}(I:h)_{(a'-1,b')} \xrightarrow[]{} \cdots\end{equation}
We notice that the last term in Eq.~\eqref{equation4} can be replaced by $H^{i+1}_{\mx}(I)_{(a'-1,b')}$ as $i\geq 1$, using Eq.~\eqref{equation2}. Moreover, as we are assuming that $(a,b) \in \xreg(I)$, the two graded components of the local cohomology of $I$ in Eq.~\eqref{equation4} vanish, and so $H^i_{\mx}(I,h)_{(a',b')} = 0$ for all $(a',b') \geq (a - (i-1),b)$.

In order to prove that condition $ii)$ implies condition $i)$, we consider $(a,b) \in \xreg(I,h)$ such that $(I:h)_{(a',b')} = I_{(a',b')}$ for every $(a',b') \geq (a,b)$. By Lemma \ref{lemmasaturated}, we have that $H^1_{\mx}(I)_{(a',b')} = 0$ for $(a',b') \geq (a,b)$.
As we did above, we consider the long exact sequence associated to Eq.~\eqref{equation3} at the graded pieces given by $(a',b') \geq (a - (i-2),b)$,
\begin{multline*}
   \cdots \xrightarrow[]{} H^{i-1}_{\mx}(I,h)_{(a',b')} \xrightarrow[]{} H^i_{\mx}(I:h)_{(a'-1,b')} \\
   \\ \xrightarrow[]{\delta_i} H^i_{\mx}(I)_{(a',b')} \xrightarrow[]{} H^i_{\mx}(I,h)_{(a',b')} \xrightarrow[]{} \cdots.  
\end{multline*}
As $(a,b) \in \xreg(I,h)$, the graded pieces of local cohomology modules associated to $(I,h)$ vanish, and so the map $\delta_i$ is an isomorphism. 
By Lemma \ref{vanishingoflocalcohomology}, there exists $\lambda$ sufficiently big such that
\begin{equation}
\label{equation5}
    H^i_{\mx}(I)_{(a' + \lambda,b')} = 0 \quad (a',b') \geq (a - (i-1),b).
\end{equation}
As we are assuming that $(I:h)_{\geq (a,b)} = I_{\geq (a,b)}$,  Eq.~\eqref{equation2} holds and, together with Eq.~\eqref{equation5} and  the isomorphism $\delta_i$, we have that for every $i > 1$ and every $(a',b') \geq (a - (i-1),b)$,
\begin{equation}
\label{equation6}
    H^i_{\mx}(I)_{(a' + \lambda -1,b')} \underrel{\text{}}{\cong}  H^i_{\mx}(I:h)_{(a' + \lambda -1,b')} \underrel{\delta_i}{\cong} H^i_{\mx}(I)_{(a' + \lambda,b')} \underrel{}{=} 0.
\end{equation}
where the first isomorphism follows from Eq.~\eqref{equation2}, the second isomorphism is $\delta_i$, and the last equality to zero follows from Eq.~\eqref{equation5}.
If we apply the above repeatedly, starting from big enough $\lambda$, we conclude that for every $i \geq 1$,  $H_{\mx}^i(I)$ vanishes at every degree bigger or equal to $(a - (i-1), b)$, and therefore $(a,b) \in \xreg(I)$.
\end{proof}


The following theorem, which aims to characterize $\xreg(I)$, can be seen as a partial extension of the criterion of Bayer and Stillman to \ccc{characterize} the Castelnuovo-Mumford regularity in the single graded case \cite[Theorem 1.10]{bayer_criterion_1987} to the setting of bigraded ideals.

\begin{theorem}
\label{theoremcriterion}
Consider a bihomogeneous ideal $I \subset S$. Then, for $(a,b) \in \mathbb{Z}^2_{\succsim 0}$, the following are equivalent:
\begin{itemize}
    \item[i)] $(a,b) \in \xreg(I)$.
    \item[ii)] There exists a non-negative integer $k_0 \leq n$ such that for all $k = 0,\dots,k_0$ and $(a',b') \geq (a,b)$, we have:
    \begin{equation}
    \label{coloncondition}(J_{k-1}:h_k)_{(a',b')} = (J_{k-1})_{(a',b')}\end{equation}
    where $J_{k-1} = (I,h_0,\dots,h_{k-1})$ (with the convention $J_{-1} = I$), $h_k$ are generic linear $x$-forms and $J_{k_0} \supset \mx$.
\end{itemize}
\end{theorem}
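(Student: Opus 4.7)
The proof is by iterated application of Lemma \ref{lemmainduction}. The intuition is that, starting from $I$, we add generic linear $x$-forms one at a time and stop as soon as the resulting ideal contains $\mx$; the dimension count $\dim_{\mathbf{k}} S_{(1,0)} = n + 1$ guarantees termination in at most $n + 1$ steps, which is exactly the bound $k_0 \leq n$ in the statement.

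For the implication (i) $\Rightarrow$ (ii), I would choose $h_0, h_1, \dots, h_n \in S_{(1,0)}$ in a Zariski open subset of $S_{(1,0)}^{n+1}$ so that, first, $(h_0, \dots, h_n)$ is a basis of $S_{(1,0)}$, forcing $\mx \subset J_n$, and, second, each $h_k$ is generic relative to $J_{k-1}$ in the sense required by Lemma \ref{lemmainduction}. Starting from $(a,b) \in \xreg(I) = \xreg(J_{-1})$, a direct induction on $k$ via Lemma \ref{lemmainduction} yields, at each step, both Eq.~\eqref{coloncondition} at level $k$ and the membership $(a,b) \in \xreg(J_k)$. Taking $k_0$ to be the smallest index with $\mx \subset J_{k_0}$ (which is at most $n$) then gives the data required by (ii).

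For the converse (ii) $\Rightarrow$ (i), the plan is a downward induction on $k$ anchored at the base case $(a,b) \in \xreg(J_{k_0})$. To establish this base case I would use the hypothesis $\mx \subset J_{k_0}$ to observe that $S/J_{k_0}$ is a quotient of $S/\mx \cong \mathbf{k}[y_0, \dots, y_m]$: every $x_i$ acts as zero on $S/J_{k_0}$, so this module is entirely $\mx$-torsion and, in particular, $(S/J_{k_0})_{(a',b')} = 0$ whenever $a' \geq 1$. Combining this with Eq.~\eqref{localcohomologyx}, which gives $H^i_{\mx}(S)_{(a',b')} = 0$ for all $i$ whenever $a' \geq -n$, the long exact sequence of local cohomology associated to $0 \to J_{k_0} \to S \to S/J_{k_0} \to 0$ yields $H^i_{\mx}(J_{k_0})_{(a',b')} = 0$ for all $i \geq 1$ and all $(a',b')$ with $a' \geq 1$. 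Since the hypothesis $(a,b) \in \mathbb{Z}^2_{\succsim 0}$ forces $a \geq 1$, this is exactly $(a,b) \in \xreg(J_{k_0})$. The inductive step, applying Lemma \ref{lemmainduction} to $J_{k-1}$ with the form $h_k$ and using Eq.~\eqref{coloncondition} at level $k$, then propagates the regularity back down from $J_{k_0}$ to $J_{-1} = I$.

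The main technical obstacle is the base case of the downward induction, where one must translate the purely ideal-theoretic hypothesis $\mx \subset J_{k_0}$ into vanishing of $H^i_{\mx}(J_{k_0})$ in the required range of bidegrees; this is where the condition $a \geq 1$ built into $\mathbb{Z}^2_{\succsim 0}$ is genuinely used. A secondary subtlety is ensuring that the genericity conditions on $h_0, \dots, h_n$ required by the successive applications of Lemma \ref{lemmainduction} hold simultaneously, but this is routine, since each constraint defines a Zariski open subset of $S_{(1,0)}^{n+1}$ and only finitely many such conditions are intersected.
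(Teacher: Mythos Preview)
Your proposal is correct and follows essentially the same strategy as the paper: reduce to Lemma~\ref{lemmainduction} applied iteratively, with the anchor being the observation that any ideal containing $\mx$ has partial regularity region covering all of $\mathbb{Z}^2_{\succsim 0}$. The only organizational difference is that the paper packages both implications into a single induction on the minimal $k_0$ with $J_{k_0}\supset\mx$, whereas you run a forward induction for (i)$\Rightarrow$(ii) and a separate downward induction for (ii)$\Rightarrow$(i); and for the base case the paper invokes $H^i_{\mx}(I)\cong H^i_{\mx}(I^{\xsat})=H^i_{\mx}(S)$ for $i\geq 2$ together with $H^1_{\mx}(I)=I^{\xsat}/I$, while you obtain the same vanishing from the long exact sequence of $0\to J_{k_0}\to S\to S/J_{k_0}\to 0$ after noting that $S/J_{k_0}$ is $\mx$-torsion (hence has no higher $\mx$-local cohomology) and is concentrated in $x$-degree $0$.
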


\begin{proof} 

We will proceed by induction in the minimal number $k_0$ such that 
$J_{k_0} \supset \mx$. Consider $I$ such that $k_0 = -1$. 
As $I \supset \mx$, we have $I^{\xsat} = S$.
Moreover, as in the classical setting (see \cite[Corollary 2.1.7]{broadmann})
the higher local cohomology modules of $I$ are the same as those of the saturation, and so
$$H^i_{\mx}(I) = H^i_{\mx}(I^{\xsat}) = H^i_{\mx}(S)  \text{ for every } i \geq 2.$$
By Remark~\ref{rem:lcohomo-when-I-eq-S}, the previous cohomology $H^i_{\mx}(S)_{(a,b)}$ vanish, unless $i = n + 1$, $a \leq -n-1$ and $b \geq 0$.
Therefore,
$$ H^{n+1}_{\mx}(I)_{(a',b')} = 0 \quad \text{ for } (a',b') \geq (a-n,b).$$
Moreover, as $a > 0$ and $I \supset \mx$, we have $I^{\xsat}_{(a,b)} = I_{(a,b)}=S_{(a,b)}$. This last condition is equivalent to the fact that $H^{1}_{\mx}(I)_{(a,b)} = 0$ for $a > 0$, and so  $\xreg(I) \supset \mathbb{Z}^2_{\succsim 0}$.
This shows that, for $k_0 = -1$ condition $i)$ is also always satisfied.

For the inductive step, assume that the theorem holds for every ideal such that its associated $k_0$ is at most $t$.
Consider an ideal $I$ such that its associated $k_0$ is $t+1$. Then, for a generic $x$-form $h$, the  $k_0$ associated to $(I,h)$ is $t$. 
Hence, we can apply our inductive hypothesis to $(I,h)$.
The proof follows straightforwardly from Lemma~\ref{lemmainduction}.
\end{proof}

\ccc{\begin{remark}
The above generalization of the work of Bayer and Stillman is partial in the following sense: in \cite[Theorem 1.10]{bayer_criterion_1987}, they show that, in the standard $\mathbb{Z}$-graded case, if Eq.~\eqref{coloncondition} is satisfied at degree $a$, then it is also satisfied for all degrees $a' \geq a$. In order to prove this, they need to use that if $a \in \reg(I,h)$ for a generic linear form $h$ and $I$ is generated in degrees lower than $a$, then $(I:h)$ is generated in degrees lower than $a$ (see \cite[Lemma 1.9]{bayer_criterion_1987}). Further work is required in order to find an analogue of \cite[Lemma 1.9]{bayer_criterion_1987} for $\xreg(I,h)$ that allows us show that if Eq.~\eqref{coloncondition} is satisfied at $(a,b) \in \mathbb{Z}^2$, then it is satisfied for all $(a',b') \geq (a,b)$. In a recent paper, we treat this problem in the special case of systems where the first projection of the biprojective variety is zero-dimensional; see \cite[Theorem. 2.3]{ourPaperAtISSAC}.
\end{remark}
}

\subsection{The partial regularity region and the bigeneric initial ideal} Our next goal is to prove that the region $\xreg(I)$ and the region $\xreg(\bigin(I))$ coincide. This will only happen if we consider the $\bigin(I)$ with respect to the $\DRL$ monomial order in Eq.~\eqref{eqmono}. In the next lemma, we analyze the behavior of ideals under change of coordinates.


\begin{lemma}
\label{lemmagenericform}
Let $I \subset S$ be a bihomogeneous ideal and $u \in \GL(n+1) \times \GL(m+1)$.
Then, the following hold:
\begin{itemize}
    \item[i)] $u \circ (I^{\xsat}) = (u \circ I)^{\xsat}$.
    \item[ii)] Let $h_0,\dots,h_{n}$ be linear forms satisfying that $x_{k} = u \circ h_{k}$ for $k = 0,\dots,n$ and $(a,b) \in \mathbb{Z}^2$. Then,\begin{multline*}
        (I,h_0,\dots,h_{k-1}:h_{k})_{(a,b)} = (I,h_{0},\dots,h_{k-1})_{(a,b)} \iff  \\ [(u \circ I),x_{0},\dots,x_{k-1}:x_k]_{(a,b)} = [(u \circ I),x_{0},\dots,x_{k-1}]_{(a,b)}.
    \end{multline*}
    for all $k = 0,\dots,n$.
\end{itemize}
\end{lemma}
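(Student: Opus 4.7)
The plan is to leverage the fact that the substitution $u \circ (\cdot)$ is a bigraded $\mathbf{k}$-algebra automorphism of $S$ and to transport the relevant equalities of ideals through it. For part (i), the first step is to observe that $u$ preserves $\mx$: since $u$ is block-diagonal, each $u \circ x_i$ is a linear combination of $x_0, \dots, x_n$, and the invertibility of $u^x$ yields $u \circ \mx = \mx$, hence $u \circ \mx^N = \mx^N$ for every $N \geq 1$. Combined with the general identity $u \circ (J : K) = (u \circ J) : (u \circ K)$, itself an immediate consequence of $u \circ (\cdot)$ being a ring automorphism, this gives the chain $u \circ I^{\xsat} = u \circ (I : \mx^\infty) = (u \circ I) : \mx^\infty = (u \circ I)^{\xsat}$.

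For part (ii), I would first record two building blocks valid for any ideals $J, J_1, J_2 \subset S$, any $f \in S$, and any $u \in \GL(n+1) \times \GL(m+1)$: (a) $u \circ (J_1 + J_2) = (u \circ J_1) + (u \circ J_2)$, and (b) $u \circ (J : f) = (u \circ J) : (u \circ f)$. Property (a) follows directly from the definitions, while (b) is verified by the chain $g \in (J : f) \iff gf \in J \iff (u \circ g)(u \circ f) \in u \circ J$, where the last step uses multiplicativity and bijectivity of the automorphism $u \circ (\cdot)$. Applying (a) iteratively, together with the hypothesis $u \circ h_i = x_i$, gives $u \circ (I, h_0, \dots, h_{k-1}) = (u \circ I, x_0, \dots, x_{k-1})$; then (b) with $f = h_k$ yields $u \circ \bigl((I, h_0, \dots, h_{k-1}) : h_k\bigr) = (u \circ I, x_0, \dots, x_{k-1}) : x_k$.

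To conclude, I would use that $u \circ (\cdot)$ preserves the bigrading, so it restricts to a $\mathbf{k}$-vector space isomorphism $J_{(a,b)} \xrightarrow{\sim} (u \circ J)_{(a,b)}$ for every ideal $J$ and every $(a,b) \in \mathbb{Z}^2$. The equality of bigraded pieces on one side of the claimed equivalence is thereby transported to the equality on the other side, and the reverse implication is obtained by applying $u^{-1}$. The argument is essentially a formal manipulation of the automorphism $u \circ (\cdot)$, so no genuine obstacle arises; the main point of vigilance is the convention of the action ($u \circ f := f(u(x,y))$), to ensure that $u \circ (\cdot)$ is consistently handled as a ring automorphism with inverse $u^{-1} \circ (\cdot)$.
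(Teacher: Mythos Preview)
Your proof is correct and follows essentially the same approach as the paper: both arguments rest on the fact that $\mx$ is invariant under the action of $\GL(n+1)\times\GL(m+1)$ and that colon ideals (and sums) commute with the ring automorphism $u\circ(\cdot)$. The paper's proof is terser, but your version spells out the same ingredients.
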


\begin{proof}
    In order to prove part $i)$, we note that $\mx$ is invariant under the action of $\GL(n+1) \times \GL(m+1)$. Therefore, if $f \in u \circ I^{\xsat}$, then there is $g \in I^{\xsat}$ such that $f = u \circ g$ and there exists $t$ with $g\mathfrak{m}_x^t \subset I$. This is equivalent to the fact that $f\mathfrak{m}_x^{t} \subset u \circ I$. Similarly, the proof of part $ii)$ follows from the fact that the colon ideal commutes with the change of coordinates.
\end{proof}

The following lemma shows that we can verify the equality between $I$ and its colon ideal with respect to a variable by looking at the initial ideal. This is a classical property of the $\DRL$ monomial order defined in Eq.~\eqref{eqmono}; see Remark \ref{remarkofdrl} for more details.

\begin{lemma}[{\cite[Lemma 2.2]{bayer_criterion_1987}}]
\label{biglemma} Let $I \subset S$ be a bihomogeneous ideal and let $(a,b) \in \mathbb{Z}_{\succsim 0}^2$. For $k = 0,\dots,n$, we have the following:
\begin{itemize}
    \item[i)] $\ini(I,x_0,\dots,x_k) = (\ini(I),x_0,\dots,x_k)$.
    \item[ii)] Suppose that $x_0,\dots,x_{k-1} \in I$ and that we are using the $\DRL$ monomial order in Eq.~\eqref{eqmono}, then: $$   (I:x_k)_{(a,b)} = I_{(a,b)} \iff  (\ini(I):x_k)_{(a,b)} = \ini(I)_{(a,b)}. $$ 
\end{itemize}
\end{lemma}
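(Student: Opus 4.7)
The proof rests on the following key property of the DRL monomial order: for any fixed bidegree, monomials divisible by at least one of $x_0, \ldots, x_k$ form a bottom segment in DRL, while monomials in $\mathbf{k}[x_{k+1}, \ldots, x_n, y_0, \ldots, y_m]$ form a top segment. This generalizes Eq.~\eqref{drl} (the case $k=0$) and follows directly from Definition~\ref{def:DRL}: comparing a monomial $m_1$ involving some $x_j$ with $j \leq k$ to $m_2$ involving none of $x_0,\dots,x_k$, the leftmost nonzero entry of the exponent-vector difference lies in an $x_j$-slot with $j \leq k$ and has the sign forcing $m_1 < m_2$.

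For part (i), the inclusion $(\ini(I), x_0, \ldots, x_k) \subseteq \ini(I, x_0, \ldots, x_k)$ is immediate. For the reverse inclusion, I would write $f \in (I, x_0, \ldots, x_k)$ as $f = g + h$ with $g \in I$ and $h \in (x_0, \ldots, x_k)$, and split $g = g_T + g_B$ into its top- and bottom-segment components. Since $h$ and $g_B$ both lie entirely in the bottom segment, no cancellation can occur with the top segment; so either $g_T = 0$, forcing $f \in (x_0, \ldots, x_k)$, or $\ini(f) = \ini(g_T) = \ini(g) \in \ini(I)$.

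For part (ii), the plan is to reduce to the case where $x_k$ is the smallest variable by quotienting out the $x_j$'s with $j < k$. Set $\bar{S} = S/(x_0, \ldots, x_{k-1})$ and $\bar{I} = I/(x_0, \ldots, x_{k-1})$; since $x_0, \ldots, x_{k-1} \in I$ we have $S/I \cong \bar{S}/\bar{I}$, so $(I:x_k)_{(a,b)} = I_{(a,b)}$ if and only if $(\bar{I}:x_k)_{(a,b)} = \bar{I}_{(a,b)}$. From part (i), $\ini(I) \supset (x_0, \ldots, x_{k-1})$; combined with the bottom-segment property (any $f \in I$ whose leading term is divisible by some $x_j$, $j < k$, already lies in $(x_0, \ldots, x_{k-1})$), this yields $\ini(\bar{I}) = \ini(I)/(x_0, \ldots, x_{k-1})$, and hence the analogous equivalence on the initial-ideal side.

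In $\bar{S}$, the variable $x_k$ is the smallest, so Eq.~\eqref{drl} applies: $x_k \mid \ini(f) \implies x_k \mid f$. For the direction $(\Rightarrow)$, a monomial $m$ of bidegree $(a,b)$ with $m x_k \in \ini(\bar{I})$ lifts to some $f \in \bar{I}$ with $\ini(f) = m x_k$; factoring $f = x_k f'$ yields $f' \in (\bar{I}:x_k)_{(a,b)} = \bar{I}_{(a,b)}$ with $\ini(f') = m \in \ini(\bar{I})$. For $(\Leftarrow)$, given $f \in (\bar{I}:x_k)_{(a,b)}$, the identity $\ini(f) x_k = \ini(f x_k) \in \ini(\bar{I})$ places $\ini(f) \in (\ini(\bar{I}):x_k)_{(a,b)} = \ini(\bar{I})_{(a,b)}$, so we subtract an element of $\bar{I}$ with matching leading term and iterate, the process terminating by the Noetherian property of the monomial order. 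The main obstacle I anticipate is the bigraded bookkeeping in the descent $I \leadsto \bar{I}$, specifically justifying the identification $\ini(\bar{I}) = \ini(I)/(x_0,\ldots,x_{k-1})$ at the level of bidegrees; once part (i) and the bottom-segment property are available, this is a clean verification.
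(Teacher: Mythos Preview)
The paper does not supply its own proof of this lemma; it is simply quoted from Bayer--Stillman \cite[Lemma 2.2]{bayer_criterion_1987}. Your argument is a correct and faithful adaptation of that classical proof to the bigraded DRL setting: the segment property you isolate (monomials divisible by some $x_0,\ldots,x_k$ form the bottom of each bidegree under the order of Definition~\ref{def:DRL}) is precisely what drives both parts, and the reduction to $\bar S=S/(x_0,\ldots,x_{k-1})$ in part (ii) via $S/I\cong\bar S/\bar I$ is the standard move. The one point worth making explicit is that in part (i) you should first reduce to $f$ bihomogeneous (possible since all ideals in sight are bihomogeneous), so that $g$ and $h$ can be taken bihomogeneous of the same bidegree and the within-bidegree segment comparison is licensed; once this is said, the argument is complete.
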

The following lemma generalizes Lemma \ref{lemmasaturated} to the bigeneric initial ideal.

\begin{lemma}
\label{nonzerodivisor_in_bigin}
    Let $I \subset S$ be a bihomogeneous ideal. For every $k = 0,\dots,n$, let $J_{k-1} := (\bigin(I),x_0,\dots,x_{k-1})$ (with the convention $J_{-1}=I$). If $(J_{k-1})^{\xsat} \neq S$, then $x_k$ is a non-zero divisor in $S/(J_{k-1})^{\xsat}$.
\end{lemma}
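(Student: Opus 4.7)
\textbf{Plan for the proof of Lemma \ref{nonzerodivisor_in_bigin}.}

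The plan is as follows. Since $\bigin(I)$ is a monomial ideal, $J_{k-1}=(\bigin(I),x_0,\ldots,x_{k-1})$ is also monomial, and hence so is $(J_{k-1})^{\xsat}$ (the saturation of a monomial ideal with respect to the monomial ideal $\mx$ remains monomial). Therefore, to check that $x_k$ is a non-zero divisor modulo $(J_{k-1})^{\xsat}$, it suffices to show that for every monomial $m \in S$, the containment $x_k m \in (J_{k-1})^{\xsat}$ implies $m \in (J_{k-1})^{\xsat}$. I will split the argument into two cases depending on whether or not $m$ is divisible by some $x_\ell$ with $\ell<k$. In the first case, $m$ already lies in $(x_0,\ldots,x_{k-1}) \subseteq J_{k-1} \subseteq (J_{k-1})^{\xsat}$, so nothing is to be proved.

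The main step is the case where $m$ is a monomial only in the variables $x_k,\ldots,x_n,y_0,\ldots,y_m$. Here the plan is to first establish the following characterization: a monomial $m$ that involves no $x_\ell$ with $\ell<k$ lies in $(J_{k-1})^{\xsat}$ if and only if there exists $t \geq 0$ such that $m \cdot x_k^t \in \bigin(I)$. One direction is immediate from the definition of saturation. For the converse, suppose $m \cdot x_k^t \in \bigin(I)$; then applying the bi-Borel property of $\bigin(I)$ with respect to the $x$-block (Lemma \ref{lemmabiborel}), each factor of $x_k$ in $m \cdot x_k^t$ can be replaced successively by any $x_j$ with $j \geq k$, so that $m \cdot x^{\alpha} \in \bigin(I)$ for every monomial $x^{\alpha}$ of degree $t$ in $x_k,\ldots,x_n$. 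Combined with the automatic containment $m \cdot x^{\alpha} \in (x_0,\ldots,x_{k-1}) \subseteq J_{k-1}$ whenever $x^{\alpha}$ involves some $x_\ell$ with $\ell<k$, this yields $m \cdot \mx^t \subseteq J_{k-1}$, i.e.\ $m \in (J_{k-1})^{\xsat}$.

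With this characterization in hand, the proof concludes quickly: if $x_k m \in (J_{k-1})^{\xsat}$ and $m$ is a monomial only in $x_k,\ldots,x_n,y$, then $x_k m$ also involves only these variables, so by the characterization there exists $t$ with $(x_k m)\cdot x_k^t = m \cdot x_k^{t+1} \in \bigin(I)$, which in turn gives $m \in (J_{k-1})^{\xsat}$. The hypothesis $(J_{k-1})^{\xsat} \neq S$ is used only to ensure that $x_k$ represents a nonzero element of the quotient, so that the non-zero divisor statement is non-vacuous. I expect the main obstacle to be the careful bookkeeping inside the Borel-type closure argument at the heart of the characterization: one has to check that the bi-Borel moves inherited from $\bigin(I)$ mesh correctly with the extra generators $x_0,\ldots,x_{k-1}$, so that verifying $m \cdot \mx^t \subseteq J_{k-1}$ truly reduces to the single containment $m \cdot x_k^t \in \bigin(I)$.
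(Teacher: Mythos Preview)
Your proof is correct but takes a different route from the paper's. The paper argues via associated primes: citing \cite{concauniversal}, it notes that the associated primes of a bi-Borel fixed monomial ideal have the form $(x_{t_x},\ldots,x_n,y_{t_y},\ldots,y_m)$, observes that saturating with respect to $\mx$ removes those primes with $t_x=0$ (more generally, for $J_{k-1}$ it removes those containing $x_k$), and concludes that $x_k$ lies in no associated prime of $(J_{k-1})^{\xsat}$, hence is a non-zero divisor. Your argument instead works directly at the level of monomials: you characterize membership of a monomial $m$ not involving $x_0,\ldots,x_{k-1}$ in $(J_{k-1})^{\xsat}$ by the single condition $m\,x_k^t\in\bigin(I)$ for some $t$, using the bi-Borel moves of Lemma~\ref{lemmabiborel} to propagate $m\,x_k^t\in\bigin(I)$ to $m\,\mx^t\subseteq J_{k-1}$; the colon-ideal statement then follows by replacing $t$ with $t+1$. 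Your approach is more elementary and self-contained (no external structural result on associated primes is needed), at the cost of a slightly longer hands-on verification; the paper's approach is shorter but relies on the cited classification of associated primes of Borel-fixed ideals. One minor remark: the hypothesis $(J_{k-1})^{\xsat}\neq S$ is in fact not needed in your argument at all, since you establish the equality $((J_{k-1})^{\xsat}:x_k)=(J_{k-1})^{\xsat}$ unconditionally; when the saturation equals $S$ the quotient is the zero ring and the statement is vacuous anyway.
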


\begin{proof}
     We first prove the case $k = 0$. Following the same argument as in \cite[Lemma 2.1]{concauniversal}, we note that the associated primes of a bi-Borel fixed ideal are of the form:
    $$ \mathcal{P}_{t} = (x_{t_x},\dots,x_{n},y_{t_y},\dots,y_m)$$
    for some $t_x,t_y \in \mathbb{Z}^2$ such that $0 \leq t_x \leq n$ and $0 \leq t_y \leq m$. If $\bigin(I)^{\xsat} \neq S$, the associated primes of $\bigin(I)^{\xsat}$ must satisfy $t_x > 0$. Therefore, $x_{0}$ cannot be contained in the union of the associated primes of $\bigin(I)^{\xsat}$.

    In the case where $k > 0$, we note that $\bigin(I) \cap \mathbf{k}[x_k,\dots,x_n,y_0,\dots,y_m]$ is also bi-Borel fixed. Therefore, the associated primes of $J_{k-1}$ which contain $x_k$ are not associated primes of $(J_{k-1})^{\xsat}$. The proof follows by the same argument as above.
\end{proof}


Using the above lemmas, we obtain the following theorem. 
\begin{theorem}
\label{main-theo-sec-4}
Let $I \subset S$ be a bihomogeneous ideal and $h_0,\dots,h_k$ are generic linear $x$-forms. Then, for every $(a,b) \in \mathbb{Z}^2_{\succsim 0}$ and $k = 0,\dots,n$
\begin{multline}
\label{eqbigin}
    (I,h_0,\dots,h_{k-1}:h_{k})_{(a,b)} = (I,h_{0},\dots,h_{k-1})_{(a,b)} \iff \\ \quad [(\bigin(I),x_{0},\dots,x_{k-1}):x_k]_{(a,b)} = [(\bigin(I),x_{0},\dots,x_{k-1})]_{(a,b)}.
\end{multline} 
In particular, $\xreg(I) \cap \mathbb{Z}_{\succsim 0}^2 = \xreg(\bigin(I)) \cap \mathbb{Z}_{\succsim 0}^2$.
\end{theorem}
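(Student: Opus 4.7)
The plan is to combine the two preparatory lemmas (Lemmas~\ref{lemmagenericform} and~\ref{biglemma}) to reduce the colon condition on $I$ to one on $\bigin(I)$, and then invoke the Bayer--Stillman style criterion (Theorem~\ref{theoremcriterion}) to pass to the partial regularity regions. I expect the main obstacle to be clarifying why $x_k$ may play the role of a \emph{generic} linear $x$-form when the outer ideal is $\bigin(I)$ rather than a generic change of coordinates of $I$; this is precisely what Lemma~\ref{nonzerodivisor_in_bigin} is designed to handle.

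\textbf{Step 1: Reduction via change of coordinates.} Since $h_0, \ldots, h_n$ are generic linear $x$-forms, I can pick $u \in \GL(n+1) \times \GL(m+1)$ (with identity on the $y$-block) such that $u \circ h_i = x_i$ for $i = 0, \ldots, n$, and such that $u$ lies in the Zariski open subset defining $\bigin(I)$, so that $\ini(u \circ I) = \bigin(I)$. By Lemma~\ref{lemmagenericform}(ii), the colon condition
\[ (I, h_0, \ldots, h_{k-1} : h_k)_{(a,b)} = (I, h_0, \ldots, h_{k-1})_{(a,b)} \]
is equivalent to the corresponding condition for $u \circ I$ with the $h_i$'s replaced by $x_i$'s.

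\textbf{Step 2: Transfer to the initial ideal.} Apply Lemma~\ref{biglemma}(i) iteratively to get
\[ \ini(u \circ I, x_0, \ldots, x_{k-1}) = (\bigin(I), x_0, \ldots, x_{k-1}). \]
Now Lemma~\ref{biglemma}(ii), applied to the ideal $(u \circ I, x_0, \ldots, x_{k-1})$ (which obviously contains $x_0, \ldots, x_{k-1}$), translates the colon-by-$x_k$ condition into an identical condition on $(\bigin(I), x_0, \ldots, x_{k-1})$. Chaining the equivalences from Step~1 and Step~2 yields Eq.~\eqref{eqbigin}.

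\textbf{Step 3: From the pointwise equivalence to the equality of regions.} To deduce $\xreg(I) \cap \mathbb{Z}_{\succsim 0}^2 = \xreg(\bigin(I)) \cap \mathbb{Z}_{\succsim 0}^2$, I invoke Theorem~\ref{theoremcriterion} on both sides. For $I$, the criterion characterizes $(a,b) \in \xreg(I)$ by the chain of colon conditions with generic $h_k$'s, each of which by Eq.~\eqref{eqbigin} is equivalent to the analogous colon condition for $\bigin(I)$ with $x_k$'s. It remains to verify that this chain of colon conditions for $\bigin(I)$ with the specific linear forms $x_0, x_1, \ldots$ is in turn equivalent to $(a,b) \in \xreg(\bigin(I))$. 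The proof of Theorem~\ref{theoremcriterion} only uses the genericity of $h_k$ through the fact that it is a non-zero divisor on $S/J_{k-1}^{\xsat}$ (see the use of Lemma~\ref{lemmasaturated} in Lemma~\ref{lemmageneric} and then in Lemma~\ref{lemmainduction}). For the bi-Borel fixed ideal $\bigin(I)$, Lemma~\ref{nonzerodivisor_in_bigin} guarantees precisely that $x_k$ is a non-zero divisor on $S/(\bigin(I), x_0, \ldots, x_{k-1})^{\xsat}$, so the same inductive argument going through Lemmas~\ref{lemmageneric} and~\ref{lemmainduction} applies verbatim with $x_k$ in place of a generic $h_k$. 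Combining these two applications of the criterion closes the equivalence and establishes the theorem.
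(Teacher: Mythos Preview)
Your proof is correct and follows essentially the same approach as the paper: the equivalence \eqref{eqbigin} is obtained by combining the change-of-coordinates Lemma~\ref{lemmagenericform} with the DRL colon Lemma~\ref{biglemma}, and the equality of partial regularity regions is then deduced by rerunning the proof of Theorem~\ref{theoremcriterion} for $\bigin(I)$ with the specific forms $x_0,\dots,x_n$, invoking Lemma~\ref{nonzerodivisor_in_bigin} in place of Lemma~\ref{lemmasaturated} to supply the required non-zero-divisor property. Your write-up is in fact more explicit than the paper's own sketch (which merely cites the lemmas and says the criterion ``can be reproduced''), and your observation that genericity enters only through the non-zero-divisor condition is exactly the point.
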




\begin{proof}
   The first part of the proof follows straightforwardly from Lemma \ref{lemmageneric} and Lemma \ref{biglemma}. For the second part, we note that Lemma \ref{nonzerodivisor_in_bigin} implies that the proof of Theorem \ref{theoremcriterion} can be reproduced for $\bigin(I)$ using the variables $x_0,\dots,x_n$ instead of generic linear $x$-forms $h_0,\dots,h_n$. Namely, $(a,b) \in \xreg(\bigin(I)) \cap \mathbb{Z}^2_{\succsim 0}$, if and only if, there is $k_0 \in \mathbb{Z}_{\geq 0}$ such that for all $k = 0,\dots,k_0$ and $(a',b') \geq (a,b)$, we have:
    $$(J_{k-1}:x_k)_{(a',b')} = (J_{k-1})_{(a',b')}$$
    where $J_{k-1} = (\bigin(I),x_0,\dots,x_{k-1})$
   and $J_{k_0} \supset \mx$. Therefore, the proof follows straightforwardly from the first part, i.e. from Eq.~\eqref{eqbigin}.
\end{proof}




\begin{remark}
\label{remarkofdrl}
  In Lemma \ref{biglemma}, the proof of the fact that for any $k = 0,\dots,n$ and for any $I$ such that if $x_0,\dots,x_{k-1} \in I$, we have
    \begin{equation}
        \label{refini}(\ini(I):x_k)_{(a,b)} = \ini(I)_{(a,b)} \implies (I:x_k)_{(a,b)} = I_{(a,b)}
    \end{equation}
    does not require that the monomial order $<$ is degree reverse lexicographical. Therefore, Eq.~\eqref{refini} also holds for any other monomial order. On the other hand, in \cite{converse}, Loh proved that for any monomial order different than $\DRL$, it is possible to find an ideal $I$ such that the converse implication to Eq.~\eqref{refini} does not hold, regardless of the bigraded context. This motivates our choice of using the $\DRL$ monomial order in our study of the generalization of the Bayer-Stillman criterion to the bigraded setting.
\end{remark}

    As we noticed in Example \ref{ex:2-diff-orders}, the relative order of the variables of different blocks will change the bidegrees of the generators of $\bigin(I)$. Theorem \ref{main-theo-sec-4} relies on the specific choice of this order in Eq.~\eqref{eqmono}. While the criterion in Theorem \ref{theoremcriterion} would also hold symmetrically for $\yreg(I)$, this region does not remain invariant under $\bigin(I)$ unless we change the relative order of the blocks of variables. \ccc{In the case of intermixing the two blocks of variables, the proof of Lemma \ref{biglemma} does not hold.}

\begin{example}

  We continue with Example \ref{ex:2-diff-orders} and draw the regions $\yreg(I)$ and $\yreg(\bigin(I))$. We note that, using the monomial order Eq.~\eqref{eqmono},  they are  different.

    \begin{figure}
        \centering
        \begin{tikzpicture}[thick,scale=0.45, every node/.style={transform shape}]

\filldraw[green] (1,3) circle (3pt) node[anchor=north] {};
\filldraw[green] (2,2) circle (3pt) node[anchor=north] {};
\filldraw[green] (3,2) circle (3pt) node[anchor=north] {};
\filldraw[green] (5,1) circle (3pt) node[anchor=north] {};
\filldraw[green] (8,0) circle (3pt) node[anchor=north] {};

\filldraw[black] (6,1) circle (3pt) node[anchor=north] {};

\filldraw[black] (2,3) circle (3pt) node[anchor=north] {};

\filldraw[black] (2,4) circle (3pt) node[anchor=north] {};
\filldraw[black] (2,5) circle (3pt) node[anchor=north] {};
\filldraw[black] (2,6) circle (3pt) node[anchor=north] {};
\filldraw[black] (2,7) circle (3pt) node[anchor=north] {};
\filldraw[black] (2,8) circle (3pt) node[anchor=north] {};

\filldraw[red] (4,4) node[anchor=north] {};
\filldraw[red] (4,5) node[anchor=north] {};
\filldraw[red] (4,6) node[anchor=north] {};
\filldraw[black] (5,1) node[anchor=north] {\color{red} };
\filldraw[white] (5,4) circle (2pt)  node[anchor=north] {\color{red} };
\filldraw[red] (5,5) node[anchor=north] {};
\filldraw[red] (5,6) node[anchor=north] {};
\filldraw[black] (6,1)node[anchor=north] {\color{red} };
\filldraw[red] (6,2) node[anchor=north] {};
\filldraw[red] (6,3) node[anchor=north] {};
\filldraw[red] (6,4) node[anchor=north] {};
\filldraw[red] (6,5) node[anchor=north] {};

\draw[line width = 0.5mm,opacity=0.75,color=olive,-] (0,9.5) -- (0,3) -- (5,3) -- (5,2) -- (9.5,2) node[anchor=south east] {};

\draw[line width = 0.5mm,opacity=0.75,color=blue,-] (0,9.5) -- (0,8) -- (9.5,8) node[anchor=south east] {};

\fill[color=blue, opacity=0.2] (0,8) rectangle (9.5,9.5) ;

\fill[color=olive, opacity=0.2] (0,3) rectangle (9.5,9.5) ;

\fill[color=olive, opacity=0.2] (5,2) rectangle (9.5,3) ;

\foreach \y in {0,1,2,3,4,5,6,7,8,9}
   \draw (1pt,\y cm) -- (-1pt,\y cm) node[anchor=east] {$\y$};

\draw[thick,->] (0,0) -- (9.5,0) node[anchor=south east] {};

\draw[thick,->] (0,0) -- (0,9.5) node[anchor=south east] {};
\foreach \x in {,1,2,3,4,5,6,7,8,9}
 \draw (\x,1pt) -- (\x,-1pt) node[anchor=north] {$\x$};

\end{tikzpicture}
        \caption{In \textcolor{olive}{olive}, the region $\yreg(I)$. In \textcolor{blue}{blue}, the region $\yreg(\bigin(I))$.}
        \label{fig:yreg}
    \end{figure}

    \end{example}

\section{The partial regularity region and the minimal generators of $\bigin(I)$}
\label{sec::5}

In the previous section, we provided the definition and main properties of the partial regularity region $\xreg(I)$, including a criterion which generalizes the classical result of Bayer and Stillman to the setting of regions of bidegrees that we are studying. In this section, we exploit
$\xreg(I)$ 
to prove the absence of minimal generators of $\bigin(I)$ at some bidegrees (see Theorem~\ref{theoremxreg}) and to certify that there are generators near the border of the region $\xreg(I)$ (Theorem \ref{theoremexistenceelementsxreg}). 
Moreover, we also provide relations between $\reg(I)$, $\xreg(I)$ and the Betti numbers of $I$ by relying on results by Chardin and Holanda  \cite{chardinholanda}.

The following lemma, which is the bigraded analogue of \cite[Lemma 2.2 iii)]{bayer_criterion_1987}, provides sufficient conditions for the absence of minimal generators of bidegree $(a,b)$.

\begin{lemma}
\label{lem::conditions}
Consider a bihomogeneous ideal $I \subset S$ and $k \in \{0,\dots,n\}$ such that $x_0,\dots,x_{k-1} \in I$. Let $(a,b) \in \mathbb{Z}^2_{\succsim 0}$ with $a > 1$. Assume that there is no minimal generator of $\ini(I,x_k)$ of bidegree $(a,b) \in \mathbb{Z}^2_{\succsim 0}$ and that  \begin{equation}
\label{conditions1}
(\ini(I):x_k)_{(a-1,b)} = (\ini(I) + \my(\ini(I):x_k))_{(a-1,b)}.\end{equation} Then, there is no minimal generator of $\ini(I)$ of bidegree $(a,b)$.
\end{lemma}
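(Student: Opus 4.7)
The plan is to take any monomial $m$ in the monomial ideal $\ini(I)_{(a,b)}$ and exhibit a proper divisor of $m$ that lies in $\ini(I)$; since a monomial ideal is determined by its monomials, this is enough to rule out a minimal generator in bidegree $(a,b)$. The proof proceeds by splitting on whether $x_k$ divides $m$, mirroring the strategy of \cite[Lemma 2.2 iii)]{bayer_criterion_1987}.

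In the first case, assume $x_k\nmid m$. The hypothesis $x_0,\dots,x_{k-1}\in I$ combined with Lemma~\ref{biglemma} i) yields $\ini(I,x_k) = (\ini(I),x_k)$, so $m\in\ini(I,x_k)_{(a,b)}$. Since by assumption $\ini(I,x_k)$ has no minimal generator in bidegree $(a,b)$, the monomial $m$ must be properly divisible by some minimal generator of $(\ini(I),x_k)$ of bidegree $\lneq(a,b)$. Such a generator is either $x_k$, which is excluded by $x_k\nmid m$, or a minimal generator of $\ini(I)$, providing the required divisor.

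In the second case, assume $x_k\mid m$ and write $m=x_k m'$. Then $m'$ is a monomial of bidegree $(a-1,b)$, which lies in $\mathbb{Z}^2_{\succsim 0}$ thanks to $a>1$, and $m'\in(\ini(I):x_k)_{(a-1,b)}$. Hypothesis~\eqref{conditions1} places $m'$ inside the monomial ideal $\ini(I)+\my(\ini(I):x_k)$, so $m'$ is divisible by a generator of one of the two summands. If that generator comes from $\ini(I)$, then this divisor of $m'$, once multiplied by $x_k$, yields a divisor of $m$ in $\ini(I)$ of bidegree $\lneq(a,b)$. Otherwise $m'$ is a multiple of $y_j\,q$ for some $j$ and some monomial $q\in(\ini(I):x_k)$; then $qx_k\in\ini(I)$ divides $m$, and its bidegree is $\leq(a,b-1)\lneq(a,b)$ since the factor $y_j$ has been absorbed into the cofactor.

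The main point requiring care is the bidegree bookkeeping in the last subcase, which is precisely where the shape of~\eqref{conditions1} (producing a $y$-variable factor outside the colon ideal) is exploited to guarantee a strict drop in the second component. The rest of the argument is a routine divisor chase in a monomial ideal.
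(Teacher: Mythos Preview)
Your proof is correct and follows essentially the same approach as the paper's: both split on whether $x_k$ divides the monomial (the paper phrases this via $\ini(f)$ for $f\in I_{(a,b)}$, while you work directly with monomials of $\ini(I)$, which is slightly cleaner), and then invoke the two hypotheses in the two cases. One small remark on your second subcase: when the divisor comes from $\ini(I)$, you need not multiply it by $x_k$ --- the divisor $g\in\ini(I)$ of $m'$ already properly divides $m=x_km'$ with bidegree $\leq(a-1,b)\lneq(a,b)$, whereas $x_kg$ could equal $m$ itself if $g=m'$.
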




\begin{proof}


Consider an element $f \in I_{(a,b)}$. If  $x_0,\dots,x_{k-2}$, or $x_{k-1}$ divides $\ini(f)$, then $f$ cannot be a minimal generator of $\ini(I)$. Thus, up to substracting multiples of $x_0,\dots,x_{k-1}$, we may assume that $f \in \mathbf{k}[x_k,\dots,x_{n},y_0,\dots,y_m]$. If $x_k$ divides $\ini(f)$, then $\ini(f) = x_k\ \ini(\overline{f})$ for some $$\ini(\overline{f}) \in  (\ini(I):x_k)_{(a-1,b)} = (\ini(I) + \my(\ini(I):x_k))_{(a-1,b)}.$$
Hence, there is a non-constant $x^\alpha y^{\beta} \in (x_k,\my)$ and $l \in I$ of bidegree strictly smaller than $(a,b)$ such that $\ini(f) = x^\alpha y^{\beta} \, \ini(l)$. Therefore, $\ini(f)$ cannot be a minimal generator.

    Suppose now that $x_k$ does not divide $\ini(f)$. As there is no generator of $\ini(I,x_k)$ of bidegree $(a,b)$ and  $\ini(f) \in \ini(I,x_k)$, then we can write $\ini(f) = x^{\alpha'}y^{\beta'}\ini(g)$ with $x^{\alpha'}y^{\beta'} \neq 1$ and $\ini(g) \in \ini(I,x_k)$.
    Write $g$ as $g = g_1 + x_kg_2$ for $g_1 \in I$. Since $\ini(g) > \ini(x_kg_2)$, we have that $\ini(f) = x^{\alpha'}y^{\beta'}\ini(g_1)$ with $g_1 \in I$ an element of strictly lower bidegree than $(a,b)$. Hence, $\ini(f)$ is not a generator of $\ini(I)$
\end{proof}

Applying repeatedly Lemma~\ref{lem::conditions},  we get sufficient conditions for the absence of minimal generators of $\ini(I)$ of bidegree $(a,b) \in \mathbb{Z}_{\succsim 0}^2$.

 \begin{corollary*}
\label{bigtheorem}
    Let $I \subset S$ be a bihomogeneous ideal. Let $(a,b) \in \mathbb{Z}_{\succsim 0}^2$ with $a > 1$, and assume that:
\begin{equation}\label{equation-5-2}(\ini(J_{k-1}):x_k)_{(a-1,b)} = \big(\ini(J_{k-1}) + \my(\ini(J_{k-1}):x_k)\big)_{(a-1,b)}, \end{equation}
for all $k = 0,\dots,n$ and $J_k = (I,x_0,\dots,x_k)$ (with the convention $J_{-1}=I$). Then, there is no \lb{minimal} generator of $\ini(I)$ of bidegree $(a,b)$.
\end{corollary*}
\begin{proof}
    Note that there are no minimal generators of $\ini(I,x_0,\dots,x_n)$ of any bidegree $(a,b) \in \mathbb{Z}^2_{\succsim 0}$ as each of them must be divided by some $x_i$. By Lemma \ref{biglemma} and the hypothesis, this implies that there is no generator of $\ini(I,x_0,\dots,x_{n - 1})$ of bidegree $(a,b)$. Applying Lemma \ref{biglemma} $iii)$ recursively, we get that there is no generator of $\ini(I)$ of bidegree $(a,b) \in \mathbb{Z}^2_{\succsim 0}$.
\end{proof}

%

Applying Theorem \ref{theoremcriterion}, we derive the following result.

\begin{theorem}
\label{theoremxreg}
Let $I \subset S$ be a bihomogeneous ideal. Let $(a,b) \in \xreg(I) \cap \mathbb{Z}^2_{\succsim 0}$. If $(a',b') \geq (a+1, b)$, then there is no minimal generator of $\bigin(I)$ of bidegree $(a',b')$.
\end{theorem}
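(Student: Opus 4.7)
The plan is to reduce the claim to an application of Corollary~\ref{bigtheorem} to the monomial ideal $\bigin(I)$ at the bidegree $(a',b')$, by transferring the partial regularity hypothesis from $I$ to $\bigin(I)$ via Theorem~\ref{main-theo-sec-4} and then invoking Theorem~\ref{theoremcriterion}.

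First I would use Theorem~\ref{main-theo-sec-4} to replace $I$ by $\bigin(I)$: the hypothesis $(a,b) \in \xreg(I) \cap \mathbb{Z}^2_{\succsim 0}$ yields $(a,b) \in \xreg(\bigin(I)) \cap \mathbb{Z}^2_{\succsim 0}$. As explained in the proof of Theorem~\ref{main-theo-sec-4}, the criterion of Theorem~\ref{theoremcriterion} applies to $\bigin(I)$ with the specific variables $x_0,\ldots,x_n$ playing the role of generic linear $x$-forms (this is possible thanks to Lemma~\ref{nonzerodivisor_in_bigin}). Hence there exists an integer $k_0 \leq n$ such that, setting $J_{k-1} := (\bigin(I), x_0, \ldots, x_{k-1})$ with $J_{-1} = \bigin(I)$, we have $J_{k_0} \supseteq \mx$ and
\begin{equation*}
(J_{k-1} : x_k)_{(a'',b'')} = (J_{k-1})_{(a'',b'')}
\quad \text{for all } k \leq k_0 \text{ and all } (a'',b'') \geq (a,b).
\end{equation*}

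Next, fix any $(a',b') \geq (a+1, b)$; since $(a,b) \in \mathbb{Z}^2_{\succsim 0}$ we have $a \geq 1$, hence $a' \geq 2 > 1$, so the hypothesis $a' > 1$ of Corollary~\ref{bigtheorem} is satisfied. I would then verify Eq.~\eqref{equation-5-2} at bidegree $(a'-1, b')$ for every $k = 0, \ldots, n$, applied to the monomial ideal $\bigin(I)$ (where $\ini(J_{k-1}) = J_{k-1}$). Note that $(a'-1, b') \geq (a, b)$. For $k \leq k_0$, the colon equality above gives $(J_{k-1}:x_k)_{(a'-1,b')} = (J_{k-1})_{(a'-1,b')}$, so both sides of Eq.~\eqref{equation-5-2} collapse to $(J_{k-1})_{(a'-1, b')}$ since $\my(J_{k-1}:x_k) = \my J_{k-1} \subseteq J_{k-1}$. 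For $k > k_0$, we have $\mx \subseteq J_{k_0} \subseteq J_{k-1}$, so $x_k \in J_{k-1}$ and $(J_{k-1}:x_k) = S$; moreover, since $a'-1 \geq 1$, every monomial of bidegree $(a'-1, b')$ is divisible by some $x_i$ and thus lies in $\mx \subseteq J_{k-1}$, so both sides of Eq.~\eqref{equation-5-2} equal $S_{(a'-1,b')}$.

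Applying Corollary~\ref{bigtheorem} to $\bigin(I)$ at bidegree $(a', b')$ then yields that there is no minimal generator of $\ini(\bigin(I)) = \bigin(I)$ of bidegree $(a',b')$, which is the desired conclusion. There is no real obstacle beyond careful bookkeeping: the delicate points are checking the strict inequality $a' > 1$ needed for Corollary~\ref{bigtheorem} (ensured by the assumption $(a',b') \geq (a+1,b)$ and $a \geq 1$) and handling the range $k > k_0$, where the equality Eq.~\eqref{equation-5-2} becomes trivial only because $a'-1 \geq 1$ forces $S_{(a'-1,b')} \subseteq \mx \subseteq J_{k-1}$.
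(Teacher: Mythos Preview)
Your proof is correct and follows essentially the same approach as the paper: transfer the hypothesis to $\bigin(I)$ via Theorem~\ref{main-theo-sec-4}, extract the colon equalities from the criterion of Theorem~\ref{theoremcriterion} (with the variables $x_0,\ldots,x_n$ replacing generic forms), observe that these imply the condition in Corollary~\ref{bigtheorem}, and apply that corollary. Your write-up is in fact more careful than the paper's own terse proof—you explicitly verify the side condition $a'>1$ and handle the range $k>k_0$ separately, both of which the paper glosses over. One minor stylistic remark: the phrase ``$\my(J_{k-1}:x_k) = \my J_{k-1} \subseteq J_{k-1}$'' is not literally needed (and not literally true globally); what you actually use is the chain of inclusions $J_{k-1} \subseteq J_{k-1} + \my(J_{k-1}:x_k) \subseteq (J_{k-1}:x_k)$, which collapses at bidegree $(a'-1,b')$ once the two ends agree.
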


\begin{proof}Note that for every $(a',b') \in \mathbb{Z}_{\succsim 0}^2$, the equality
$$(J_{k-1}:x_k)_{(a',b')} = (J_{k-1})_{(a',b')}$$
implies that  $$(J_{k-1}:x_k)_{(a',b')} = [J_{k-1} + \my(J_{k-1}:x_k)]_{(a',b')}$$ for every $J_{k-1} = (\bigin(I),x_0,\dots,x_{k-1})$ with $k = 0,\dots,n$. Therefore, applying Corollary \ref{bigtheorem} to $\bigin(I)$ and Theorem \ref{theoremcriterion}, we deduce that if $(a,b) \in \xreg(I) \cap \mathbb{Z}^2_{\succsim 0}$, then $(J_{k-1}:x_k)_{(a',b')} = (J_{k-1})_{(a',b')}$ for all $(a',b') \geq (a,b)$ and $k = 0,\dots,n$ 
\end{proof}

In addition, we can use Lemma~\ref{lemmabiborel} to attest the presence of generators of some bidegrees, using the same criterion as in Theorem~\ref{theoremcriterion}.

\begin{theorem}
\label{theoremexistenceelementsxreg}
    Let $(a,b) \in \mathbb{Z}_{\succsim 0}^2$ with $a > 1$ such that $(a,b) \in \xreg(I)$, but $(a-1,b) \notin \xreg(I)$. Then, there exists $b' \leq b$ such that there is a minimal generator of $\bigin(I)$ of bidegree $(a,b')$.
\end{theorem}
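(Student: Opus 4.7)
The plan is to reduce to the bi-Borel fixed case and then argue by contradiction using the criterion from Theorem \ref{theoremcriterion} together with the bi-Borel property (Lemma \ref{lemmabiborel}). By Theorem \ref{main-theo-sec-4}, the hypotheses $(a,b) \in \xreg(I)$ and $(a-1,b) \notin \xreg(I)$ transfer verbatim to $\bigin(I)$ on $\mathbb{Z}^2_{\succsim 0}$, so I may work directly with $\bigin(I)$ and use the variables $x_0,\dots,x_n$ themselves (rather than generic linear $x$-forms) in the criterion. The failure $(a-1,b) \notin \xreg(\bigin(I))$, combined with $(a,b) \in \xreg(\bigin(I))$, then produces an index $k \leq k_0$ and a bidegree $(a-1,b'')$ with $b'' \geq b$ such that $(J_{k-1}:x_k)_{(a-1,b'')} \neq (J_{k-1})_{(a-1,b'')}$, where $J_{k-1} = (\bigin(I),x_0,\dots,x_{k-1})$. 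I will minimize $b''$ over all such failures and pick a monomial witness $f$ of bidegree $(a-1,b'')$, so that $f \notin \bigin(I)$, $f$ has no $x_0,\dots,x_{k-1}$ factor, and $x_k f \in \bigin(I)$.

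Writing $f = x_k^{\alpha_k}\cdots x_n^{\alpha_n} y^{\beta}$ and considering any minimal generator $g = x_k^{\mu_k}\cdots x_n^{\mu_n} y^{\gamma}$ of $\bigin(I)$ dividing $x_k f$, I split into cases according to the $x$-degree $a^*=|\mu|$ of $g$. If $a^* < a$ and $\mu_k \leq \alpha_k$, then $g \mid f$, contradicting $f \notin \bigin(I)$. If $a^* < a$ and $\mu_k = \alpha_k+1$, then $\sum_{i>k}\mu_i < \sum_{i>k}\alpha_i$, so some $j > k$ satisfies $\mu_j < \alpha_j$; invoking Lemma \ref{lemmabiborel} on $x_k(g/x_k) = g \in \bigin(I)$ gives $x_j(g/x_k) \in \bigin(I)$, and this element divides $f$, again contradicting $f \notin \bigin(I)$. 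This bi-Borel exchange step is the key use of the defining property of $\bigin(I)$.

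The remaining situation is $a^* = a$, in which case $g = x_k^{\alpha_k+1}x_{k+1}^{\alpha_{k+1}}\cdots x_n^{\alpha_n} y^{\gamma}$ for some $\gamma \leq \beta$, so $g$ is a minimal generator of $\bigin(I)$ at bidegree $(a,b^*)$ with $b^* = |\gamma| \leq b''$. If $b^* \leq b$, this is a minimal generator at $(a,b')$ with $b'=b^* \leq b$, finishing the proof. If instead $b < b^* < b''$, then $g/x_k \notin \bigin(I)$ (since $g$ is minimal), and $x_k(g/x_k) = g \in \bigin(I)$, so $g/x_k$ witnesses a failure at $(a-1,b^*)$ with $b \leq b^* < b''$, contradicting the minimality of $b''$. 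The delicate case is $b^* = b''$, in which $g = x_k f$ is itself a minimal generator of $\bigin(I)$ at bidegree $(a,b'')$; here I expect the minimality of $b''$ combined with the bi-Borel structure to force $b''=b$, which will again yield a minimal generator of the desired bidegree.

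The main obstacle is precisely the last subcase: ruling out $b'' > b$ when no smaller failure exists requires combining the minimality of $b''$ with the bi-Borel property of $\bigin(I)$ in both blocks of variables, and making essential use of the hypothesis $a > 1$ so that the $x$-part of $f$ can be non-trivially manipulated. The first two cases and the first half of the third case go through cleanly; all the subtlety is concentrated in this boundary scenario.
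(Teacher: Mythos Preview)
Your approach parallels the paper's but is more careful on the key first step: Theorem~\ref{theoremcriterion} only yields a failure of the colon condition at some $(a-1,b'')$ with $b'' \ge b$, whereas the paper's proof asserts without justification that this failure occurs at $(a-1,b)$ itself. Your treatment of the cases $a^*<a$ reproduces the paper's bi-Borel argument, and your handling of the subcase $b<b^*<b''$ via minimality of $b''$ is correct and goes beyond what the paper writes.

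The boundary case $b^*=b''>b$ that you isolate, however, cannot be closed, because the statement as written actually fails in that situation. Take $n=m=1$ and the bi-Borel ideal $I=\mx^2\, y_1=(x_0^2y_1,\,x_0x_1y_1,\,x_1^2y_1)$, so that $\bigin(I)=I$. Then $I^{\xsat}=(y_1)$, hence $H^1_{\mx}(I)=(y_1)/\mx^2 y_1$ vanishes in $x$-degree $\ge 2$ but is nonzero at $(1,1)$, while $H^2_{\mx}(I)\cong H^2_{\mx}(S)(0,-1)$ is supported only in $x$-degree $\le -2$. Thus $(2,0)\in\xreg(I)$ and $(1,0)\notin\xreg(I)$ (the latter precisely because $H^1_{\mx}(I)_{(1,1)}\ne 0$ with $(1,1)\ge(1,0)$), yet every minimal generator of $I$ lies in bidegree $(2,1)$, so there is no minimal generator at $(2,b')$ with $b'\le 0$. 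In your notation the minimal $b''$ equals $1$, the witness is $f=x_0y_1$, and $x_0f=x_0^2y_1$ is itself a minimal generator of bidegree $(2,1)$: this is exactly your ``delicate case,'' and neither the bi-Borel property in the $y$-block nor the hypothesis $a>1$ rescues it. The same example shows that the paper's tacit assumption $b''=b$ is not justified either; what the argument genuinely establishes is the existence of a minimal generator at some $(a,b')$ with $b'\le b''$, not $b'\le b$.
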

\begin{proof}
    If $(a-1,b) \notin \xreg(I)$, then by Theorem \ref{theoremcriterion} and Eq.~\eqref{eqbigin}, we can derive that there is $0 \leq k \leq n$ such that we have $ (J_{k-1}:x_{k})_{(a-1,b)} \neq (J_{k-1})_{(a-1,b)}$ for $J_{k-1} = (\bigin(I),x_0,\dots,x_{k-1})$. This result implies that there is a monomial $x^{\alpha}y^{\beta} \in \ccc{S_{(a-1,b)}}$ such that \begin{multline}
    \label{eq3}
        x_{k}x^{\alpha}y^{\beta} \in (\bigin(I),x_{0},\dots,x_{k-1})_{(a,b)} \text{ but } \\ x^{\alpha}y^{\beta} \notin (\bigin(I),x_{0},\dots,x_{k-1})_{(a-1,b)}.
    \end{multline} Therefore, none of the variables $x_{0},\dots,x_{k-1}$ divides the monomial $x^{\alpha}y^{\beta}$. If $x_kx^{\alpha}y^{\beta}$ is a minimal generator of $\bigin(I)$, we are done. Otherwise, write $x_kx^{\alpha}y^{\beta} = \overline{z}z^{\gamma}$
    where $z^{\gamma}$ is a minimal generator of $\bigin(I)$. We need to show that the bidegree of $z^{\gamma}$ is $(a,b')$ for some $b' \leq b$. If this is not true, then there is some $k' \geq k$ such that $x_{k'}$ divides $\overline{z}$. At this point, we have two cases:
    \begin{itemize}
        \item[-] If $k' = k$ then $x_{k}x^{\alpha}y^{\beta} = x_{k} \frac{\overline{z}}{x_{k}}z^{\gamma}$, which implies that $x^{\alpha}y^{\beta} = \frac{\overline{z}}{x_{k}}z^{\gamma} \in \bigin(I)$, in contradiction with Eq.~\eqref{eq3}.
        \item[-] If $k' > k$ then $x_{k}$ divides $z^{\gamma}$ and $x_{k'}$ divides $\overline{z}$. In this case, we write $z^{\gamma} = x_{k}z^{\gamma'}$ and $\overline{z} = x_{k'}\overline{z}'$. Using the property of $\bigin(I)$ in Lemma \ref{lemmabiborel}, we get $x_{k'}z^{\gamma'} \in \bigin(I)$
        and so $x^{\alpha}y^{\beta} = x_{k'}\overline{z}'z^{\gamma'} \in \bigin(I)$,  which is in contradiction with Eq.~\eqref{eq3}.
    \end{itemize}
    Therefore, $z^{\gamma}$ has bidegree $(a,b')$ for some $b' \leq b$.
\end{proof}

\begin{example}
\label{examplebrucexreg}
Consider the ideal $I$ in Example \ref{example3}. In Figure \ref{fig:examplebrucexreg}, one shows the region $\xreg(I) + (1,0)$ where there cannot be any generators of $\bigin(I)$ (using Theorem \ref{theoremxreg}). Moreover, we mark the columns and squares in which Theorem \ref{theoremexistenceelementsxreg} guarantees that there must be minimal generators of $\bigin(I)$ of such bidegrees. Due to the vanishing of $H^i_{\mx}(I)_{(a,b)}$ for $a \gg 0$, we note that the region $\xreg(I)$ always provides a tight bound for the degrees of the generators of $\bigin(I)$ with respect to the $x$'s. \ccc{In comparison to Example \ref{example2}, where the bound provided is $8$, we see that $\xreg(I)$ contains every bidegree in $\mathbb{Z}^2_{\succsim 0}$ whose first component greater or equal than $8$. Thus,} the bound provided in \cite[Proposition 4.2]{roemer} is also recovered.
    \begin{figure}
        \centering
         \beginpgfgraphicnamed{levels}
\begin{tikzpicture}[thick,scale=0.5]

\filldraw[green] (1,3) circle (3pt) node[anchor=north] {};
\filldraw[green] (2,2) circle (3pt) node[anchor=north] {};
\filldraw[green] (3,2) circle (3pt) node[anchor=north] {};
\filldraw[green] (5,1) circle (3pt) node[anchor=north] {};
\filldraw[green] (8,0) circle (3pt) node[anchor=north] {};

\filldraw[black] (6,1) circle (3pt) node[anchor=north] {};

\filldraw[black] (2,3) circle (3pt) node[anchor=north] {};

\filldraw[black] (2,4) circle (3pt) node[anchor=north] {};
\filldraw[black] (2,5) circle (3pt) node[anchor=north] {};
\filldraw[black] (2,6) circle (3pt) node[anchor=north] {};
\filldraw[black] (2,7) circle (3pt) node[anchor=north] {};
\filldraw[black] (2,8) circle (3pt) node[anchor=north] {};

\filldraw[red] (4,4) node[anchor=north] {};
\filldraw[red] (4,5) node[anchor=north] {};
\filldraw[red] (4,6) node[anchor=north] {};
\filldraw[black] (5,1) node[anchor=north] {\color{red} };
\filldraw[white] (5,4) circle (2pt)  node[anchor=north] {\color{red} };
\filldraw[red] (5,5) node[anchor=north] {};
\filldraw[red] (5,6) node[anchor=north] {};
\filldraw[black] (6,1)node[anchor=north] {\color{red} };
\filldraw[red] (6,2) node[anchor=north] {};
\filldraw[red] (6,3) node[anchor=north] {};
\filldraw[red] (6,4) node[anchor=north] {};
\filldraw[red] (6,5) node[anchor=north] {};

\draw[thick,->] (0,0) -- (9.5,0) node[anchor=north west] {};

\draw[line width = 0.5mm,opacity=0.75,color=brown,-] (9.5,0) -- (9,0.) -- (9,1) -- (7,1) -- (7,2.) -- (4,2) -- (4,4) -- (3,4.0) -- (3.05,8.51) node[anchor=south east] {};

\fill[color=brown, opacity=0.2] (3,4) rectangle (9.5,8.5) ;

\fill[color=brown, opacity=0.2] (4,2) rectangle (9.5,4) ;

\fill[color=brown, opacity=0.2] (7,1) rectangle (9.5,2) ;

\fill[color=brown, opacity=0.2] (9,0) rectangle (9.5,1) ;

\draw[thick,->] (0,0) -- (0,8.5) node[anchor=south east] {};
\foreach \x in {,1,2,3,4,5,6,7,8,9}
 \draw (\x,1pt) -- (\x,-1pt) node[anchor=north] {$\x$};

 \draw[thick,color=cyan,-] (2.5,8.5) -- (2.5,1.5) -- (1.5,1.5) -- (1.5,8.5) node[anchor=south east] {};

  \draw[thick,color=cyan,-] (2.5,3.5) -- (2.5,1.5) -- (3.5,1.5) -- (3.5,3.5) -- (2.5,3.5) node[anchor=south east] {};

 \draw[thick,color=cyan,-] (5.5,0.5) -- (5.5,1.5) -- (6.5,1.5) -- (6.5,0.5) -- (5.5,0.5) node[anchor=south east] {};

  \draw[thick,color=cyan,-] (7.5,0.5) -- (7.5,-0.5) -- (8.5,-0.5) -- (8.5,0.5) -- (7.5,0.5) node[anchor=south east] {};

\foreach \y in {0,1,2,3,4,5,6,7,8}
   \draw (1pt,\y cm) -- (-1pt,\y cm) node[anchor=east] {$\y$};

\end{tikzpicture}
\endpgfgraphicnamed
         \caption{In \textcolor{brown}{brown}, the region $\xreg(I) + (1,0)$. In \textcolor{blue}{blue}, columns and squares where there are generators of $\bigin(I)$.}
        \label{fig:examplebrucexreg}
    \end{figure}
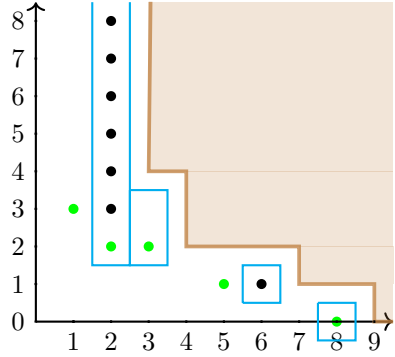
\end{example}

In what follows, we study the relation between $\xreg(I)$, 
the multigraded Castel\-nuovo-Mumford regularity of $I$ and the bidegrees of the generators of $\bigin(I)$.



\begin{theorem}
\label{bigtheoremregularity}
Let $I \subset S$ be a bihomogeneous ideal.
Then, there is $0 \leq s \leq \cd_{\mx}(I) - 1$, such that $\reg(I) + (s,0) \subset \xreg(I)$.
\end{theorem}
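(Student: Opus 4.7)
The plan is to show that one can take $s = \cd_{\mx}(I) - 1$ (the degenerate case $\cd_{\mx}(I) = 0$ being trivial since then $\xreg(I) = \mathbb{Z}^2$). The core observation is that the defining inequalities of $\reg(I)$ are already strong enough to force, via Chardin and Holanda's support identity, the vanishing of every $H^i_{\mx}(I)$ on the whole cone above $(a,b)$, which is more than enough to land a suitable $x$-shift of $(a,b)$ in $\xreg(I)$.

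First I would check that $(a,b) \in \reg(I)$ implies $H^i_{\mathfrak{b}}(I)_{(a',b')} = 0$ for every $i \geq 1$ and every $(a',b') \geq (a,b)$, equivalently that $(a,b) \notin \supp_{\mathbb{Z}^2}(H^\bullet_{\mathfrak{b}}(I))^\star$. For $i = 1$ this is just the case $\lambda_x = \lambda_y = 0$ of Definition~\ref{maclagansmithpreelim}, while for $i \geq 2$ it suffices to take the decomposition $\lambda_x = i - 1$, $\lambda_y = 0$ and observe that $(a,b) + \mathbb{Z}_{\geq 0}^2 \subset (a - i + 1, b) + \mathbb{Z}_{\geq 0}^2$, so the required vanishing follows from the defining condition. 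I would then invoke Theorem~\ref{def:chardinholanda}: the inclusion $\supp_{\mathbb{Z}^2}(H^\bullet_{\mx}(I))^\star \subset \supp_{\mathbb{Z}^2}(H^\bullet_{\mathfrak{b}}(I))^\star$ transfers the previous vanishing to $\mx$, giving $H^i_{\mx}(I)_{(a',b')} = 0$ for every $i \geq 1$ and every $(a',b') \geq (a,b)$.

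To finish, set $c = \cd_{\mx}(I)$ and $s = c - 1$. Unfolding Definition~\ref{def:partial-regularity}, the assertion $(a + s, b) \in \xreg(I)$ amounts to $H^i_{\mx}(I)_{(a'',b'')} = 0$ for every $i \geq 1$ and every $(a'',b'') \geq (a + s - i + 1, b) = (a + c - i, b)$. When $i > c$ the module vanishes by the very definition of cohomological dimension. When $1 \leq i \leq c$, the inequality $a + c - i \geq a$ yields $(a'',b'') \geq (a,b)$, so the vanishing established in the previous paragraph applies. I do not anticipate any substantive obstacle: the argument is essentially a definition chase once Theorem~\ref{def:chardinholanda} is brought in, with the shift $c - 1$ appearing precisely to absorb the $(i-1)$-style diagonal shift built into Definition~\ref{def:partial-regularity}.
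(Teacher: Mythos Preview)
Your proposal is correct and follows essentially the same route as the paper: both arguments use Definition~\ref{maclagansmithpreelim} to deduce vanishing of $H^i_{\mathfrak{b}}(I)$ on the cone above $(a,b)$, transfer this to $H^i_{\mx}(I)$ via Theorem~\ref{def:chardinholanda}, and then observe that a shift by at most $\cd_{\mx}(I)-1$ lands the point in $\xreg(I)$. Your version is in fact slightly more explicit than the paper's, since you fix $s = \cd_{\mx}(I)-1$ and spell out the case split $i>c$ versus $1\leq i\leq c$, whereas the paper simply asserts the existence of such an $s$; you also flag the degenerate case $\cd_{\mx}(I)=0$, which the paper leaves implicit.
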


\begin{proof}
    If $(a,b) \in \reg(I)$, then we have $H_{\mathfrak{b}}^i(I)_{(a',b')} = 0$ for all $i \geq 1$ and $(a',b') \geq (a - \lambda_x,b - \lambda_y)$ with $(\lambda_x,\lambda_y) \in \mathbb{Z}^2_{\geq 0}$ such that $\lambda_x + \lambda_y = i - 1$. In particular, $H_{\mathfrak{b}}^i(I)_{(a',b')} = 0$  for all $(a',b') \geq (a,b)$. This implies that $(a,b) \notin \supp_{\mathbb{Z}^2}(H^{\bullet}_{\mathfrak{b}}(I))^{\star}$. Hence, by Theorem \ref{def:chardinholanda}, we get $(a,b) \notin \supp_{\mathbb{Z}^2}(H^{\bullet}_{\mx}(I))^{\star}$. It follows that $$H_{\mx}^i(I)_{(a',b')} = 0 \text{ for all }(a',b') \geq (a,b) \text{ and }i \geq 1.$$ Therefore, there is some $0 \leq s \leq \cd_{\mx}(I) - 1$ such that for $i \geq 1$, we have $H_{\mx}^i(I)_{(a',b')}$ $ = 0$ for all $(a',b') \geq (a + s - (i-1), b)$. This implies that $(a + s, b) \in \xreg(I)$ and so does every $(a',b') \geq (a + s, b)$.
\end{proof}

\begin{remark}
    Remark \ref{remarkcohomological} implies that for every ideal, the integer $s$ appearing in the above theorem is bounded by $n$. In many cases, we can also bound the cohomological dimension using the dimension of $I$, as a module over $\mathbf{k}[x_0,\dots,x_n]$; see \cite[Proposition 2.14]{grothendieckvanishing}. 
\end{remark}

As a consequence of Theorem~\ref{bigtheoremregularity}, we derive a relation between $\reg(I)$ and the minimal generators of $\bigin(I)$.

\begin{corollary}
\label{theoremreg}
Let $I \subset S$ be a bihomogeneous ideal and $(a,b) \in \reg(I) \cap \mathbb{Z}^2_{\succsim 0}$.
Then, there is $1 \leq s \leq \cd_{\mx}(I)$ such that for every $(a',b') \geq (a + s, b)$, there is no minimal generator of $\bigin(I)$ of bidegree $(a',b')$.
\end{corollary}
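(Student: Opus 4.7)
The plan is to derive the corollary as a direct consequence of combining Theorem~\ref{bigtheoremregularity} with Theorem~\ref{theoremxreg}. There is essentially no new work beyond chaining the two results together with a careful bookkeeping of the shift.

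First, I would start from the hypothesis $(a,b) \in \reg(I) \cap \mathbb{Z}^2_{\succsim 0}$ and apply Theorem~\ref{bigtheoremregularity}. This yields an integer $s'$ with $0 \leq s' \leq \cd_{\mx}(I) - 1$ such that $\reg(I) + (s',0) \subset \xreg(I)$. In particular, the shifted bidegree $(a+s', b)$ belongs to $\xreg(I)$. Since $a \geq 1$ and $s' \geq 0$, we still have $(a+s',b) \in \mathbb{Z}^2_{\succsim 0}$, so the hypotheses of Theorem~\ref{theoremxreg} are satisfied at this point.

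Next, I would invoke Theorem~\ref{theoremxreg} applied to $(a+s', b) \in \xreg(I) \cap \mathbb{Z}^2_{\succsim 0}$: it asserts that for every $(a',b') \geq (a+s'+1, b)$ there is no minimal generator of $\bigin(I)$ of bidegree $(a',b')$. Setting $s := s' + 1$, we have $1 \leq s \leq \cd_{\mx}(I)$ by construction, and the desired conclusion follows.

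Since both Theorem~\ref{bigtheoremregularity} and Theorem~\ref{theoremxreg} are already established in the excerpt, there is no genuine obstacle here: the only thing to be careful about is the $+1$ shift coming from Theorem~\ref{theoremxreg} (which is why the bound becomes $\cd_{\mx}(I)$ rather than $\cd_{\mx}(I)-1$) and the fact that $(a+s',b)$ remains in $\mathbb{Z}^2_{\succsim 0}$ so that Theorem~\ref{theoremxreg} may be legitimately applied. The proof is thus a short two-sentence argument.
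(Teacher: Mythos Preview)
Your proof is correct and follows exactly the same approach as the paper, which simply states that the result follows from applying Theorem~\ref{theoremxreg} and Theorem~\ref{bigtheoremregularity}. Your careful bookkeeping of the $+1$ shift (so that $s' \in [0,\cd_{\mx}(I)-1]$ becomes $s \in [1,\cd_{\mx}(I)]$) and the observation that $(a+s',b)$ remains in $\mathbb{Z}^2_{\succsim 0}$ are the only details needed, and you have handled them correctly.
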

\begin{proof}
    The proof follows from applying Theorem \ref{theoremxreg} and Theorem \ref{bigtheoremregularity}.
\end{proof}




Using Theorem \ref{chardinholandator}, we can also relate $\xreg(I)$ with the Betti numbers of $I$.

 \begin{theorem}
 \label{theorembetti}
Let $I \subset S$ be any bihomogeneous ideal and let $(a,b) \in \xreg(I) \cap \mathbb{Z}^2_{\succsim 0}$.
 Then, 
    $(a + n + 1,b + m + 1) \notin \beta_{i}(I)$ for all $i \geq 1$ .
\end{theorem}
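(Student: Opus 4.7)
The plan is to reduce the claim to a direct application of Theorem~\ref{chardinholandator}, after translating the hypothesis $(a,b)\in\xreg(I)$ into a vanishing statement about the support of the local cohomology with respect to $\mx$. Recall that $\supp_{\mathbb{Z}^2}(H^\bullet_{\mx}(I))^\star$ consists, by Definition~\ref{def:support}, of all bidegrees $(a,b)$ that are coordinatewise $\leq$ some bidegree at which $H^i_{\mx}(I)$ is nonzero for some $i\geq 1$. Thus $(a,b)\notin \supp_{\mathbb{Z}^2}(H^\bullet_{\mx}(I))^\star$ is equivalent to the vanishing of $H^i_{\mx}(I)_{(a',b')}$ for every $i\geq 1$ and every $(a',b')\geq (a,b)$.

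The first step is to observe that, for $(a,b)\in\xreg(I)$, the defining vanishing condition of Definition~\ref{def:partial-regularity} at the shifted region $(a',b')\geq (a-i+1,b)$ contains in particular the region $(a',b')\geq (a,b)$ (since $a\geq a-i+1$ for all $i\geq 1$). Applying this for every $i\geq 1$, one obtains $H^i_{\mx}(I)_{(a',b')}=0$ for all $i\geq 1$ and $(a',b')\geq (a,b)$, which by the above reformulation amounts to
$$(a,b)\notin \supp_{\mathbb{Z}^2}(H^\bullet_{\mx}(I))^\star.$$

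Second, I would invoke Theorem~\ref{chardinholandator}: if, by contradiction, there existed $i\geq 1$ with $(a+n+1,b+m+1)\in\beta_i(I)$, then since $\beta_i(I)\subset\beta_i(I)^\star$ (trivially from Definition~\ref{def:support}), the inclusion
$$\bigcup_i\beta_i(I)^\star\subset (n+1,m+1)+\bigl(\supp_{\mathbb{Z}^2}(H^\bullet_{\mx}(I))^\star\cap \supp_{\mathbb{Z}^2}(H^\bullet_{\my}(I))^\star\bigr)$$
would force $(a+n+1,b+m+1)-(n+1,m+1)=(a,b)$ to lie in $\supp_{\mathbb{Z}^2}(H^\bullet_{\mx}(I))^\star\cap \supp_{\mathbb{Z}^2}(H^\bullet_{\my}(I))^\star$. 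In particular $(a,b)\in \supp_{\mathbb{Z}^2}(H^\bullet_{\mx}(I))^\star$, contradicting the first step. Hence no such $i$ exists, which is the desired conclusion.

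There is no real obstacle here: the argument is essentially a bookkeeping exercise once the Chardin–Holanda inclusion and the star-support description of $\xreg(I)$ are put side by side. The only point that requires minor attention is making explicit the passage from the shifted vanishing region in Definition~\ref{def:partial-regularity} to the non-shifted region describing the star-support, which is immediate because $a\geq a-i+1$ whenever $i\geq 1$. Note also that the hypothesis $(a,b)\in\mathbb{Z}^2_{\succsim 0}$ is not used in this argument; it is only inherited from the surrounding statements in the section.
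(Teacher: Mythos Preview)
Your proof is correct and follows essentially the same approach as the paper: both arguments translate $(a,b)\in\xreg(I)$ into $(a,b)\notin\supp_{\mathbb{Z}^2}(H^\bullet_{\mx}(I))^\star$ and then apply Theorem~\ref{chardinholandator} to exclude $(a+n+1,b+m+1)$ from $\cup_i\beta_i(I)^\star$. Your additional observation that the hypothesis $(a,b)\in\mathbb{Z}^2_{\succsim 0}$ is not actually used in the argument is accurate.
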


\begin{proof}
    If $(a,b) \in \xreg(I)$, then $(a - i + 1,b) \notin \supp_{\mathbb{Z}^2}(H^i_{\mx}(I))^{\star}$ for all $i \geq 1$. In particular, $(a,b) \notin \supp_{\mathbb{Z}^2}(H^{\bullet}_{\mx}(I))^{\star}$. Using Theorem \ref{chardinholandator}, we derive that 
    $(a + n + 1,b + m + 1) \notin \cup_{i}\beta_i(I)^{\star}$, concluding the proof.  
\end{proof}

\begin{corollary}
     \label{theorembettireg}
     Let $I \subset S$ be a bihomogeneous ideal and let $(a,b) \in \xreg(I) \cap \mathbb{Z}^2_{\succsim 0}$.
     Then, there is $0 \leq s \leq \cd_{\mx}(I) - 1$, such that
    $\beta_{i,(a',b')} = 0$
    for all $i \geq 1$ and $(a',b') \geq (a + n + s + 1,b + m + 1)$.
\end{corollary}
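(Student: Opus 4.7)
The plan is to obtain the corollary as an almost immediate consequence of Theorem \ref{theorembetti}, once we observe that the partial regularity region $\xreg(I)$ is upward-closed with respect to the componentwise order. First I would verify the following elementary fact: if $(a,b) \in \xreg(I)$ and $(c,d) \geq (a,b)$, then $(c,d) \in \xreg(I)$. This is immediate from Definition \ref{def:partial-regularity}, because for every $i \geq 1$ and every $(a'',b'') \geq (c - i + 1, d)$, we automatically have $(a'',b'') \geq (a - i + 1, b)$, so the vanishing $H_{\mx}^i(I)_{(a'',b'')} = 0$ is inherited from the hypothesis on $(a,b)$.

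With this upward-closedness in hand, for every bidegree $(c,d) \in \mathbb{Z}^2_{\succsim 0}$ satisfying $(c,d) \geq (a,b)$, the pair $(c,d)$ still lies in $\xreg(I) \cap \mathbb{Z}^2_{\succsim 0}$, and Theorem \ref{theorembetti} applied to it yields $(c+n+1, d+m+1) \notin \beta_i(I)$ for every $i \geq 1$. Since every bidegree $(a',b') \geq (a+n+1, b+m+1)$ with $(a',b') \in \mathbb{Z}^2$ can be written as $(c+n+1, d+m+1)$ for some $(c,d) \geq (a,b)$ in $\mathbb{Z}^2_{\succsim 0}$, this shows $\beta_{i,(a',b')} = 0$ for all $i \geq 1$ and all $(a',b') \geq (a+n+1, b+m+1)$.

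Choosing $s = 0 \in [0, \cd_{\mx}(I) - 1]$ (when $\cd_{\mx}(I) \geq 1$; the case $\cd_{\mx}(I) = 0$ forces all higher local cohomologies to vanish and so reduces to a trivial instance) yields the corollary. Alternatively, the same conclusion can be read off directly from the proof of Theorem \ref{theorembetti}, where the invocation of the Chardin--Holanda Theorem \ref{chardinholandator} actually provides $(a+n+1, b+m+1) \notin \bigcup_i \beta_i(I)^{\star}$; by Definition \ref{def:support}, this is already equivalent to the stated range of Betti vanishing, without any need to iterate. The main point to emphasize is that no genuine obstacle arises: the corollary is essentially a uniform repackaging of Theorem \ref{theorembetti}, making its effective range of validity explicit.
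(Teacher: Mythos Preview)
Your argument is correct for the statement as printed, and in fact sharper: with the hypothesis $(a,b)\in\xreg(I)$ one may always take $s=0$, exactly as you show (either by upward-closedness of $\xreg(I)$ combined with Theorem~\ref{theorembetti}, or more directly from the $(\,\cdot\,)^{\star}$ conclusion obtained inside the proof of Theorem~\ref{theorembetti} via Theorem~\ref{chardinholandator}).

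That said, the paper's one-line proof invokes both Theorem~\ref{theorembetti} \emph{and} Theorem~\ref{bigtheoremregularity}. The latter asserts $\reg(I)+(s,0)\subset\xreg(I)$ for some $0\le s\le\cd_{\mx}(I)-1$ and is superfluous under the printed hypothesis. Together with the parallel structure of Corollary~\ref{theoremreg} (whose hypothesis is $(a,b)\in\reg(I)$), this strongly suggests the intended hypothesis here is $(a,b)\in\reg(I)\cap\mathbb{Z}^2_{\succsim 0}$ rather than $\xreg(I)$. Under that intended reading, the paper's proof is: apply Theorem~\ref{bigtheoremregularity} to obtain $(a+s,b)\in\xreg(I)$, then Theorem~\ref{theorembetti} (in its $(\,\cdot\,)^{\star}$ form) to get $\beta_{i,(a',b')}=0$ for all $(a',b')\ge(a+n+s+1,b+m+1)$. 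Your argument does not cover that version as written, since you start already in $\xreg(I)$; inserting the single step through Theorem~\ref{bigtheoremregularity} would immediately adapt it.
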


\begin{proof}
    Apply Proposition \ref{theorembetti} and Theorem \ref{bigtheoremregularity}.
\end{proof}

   We refer to \cite[Corollary 3.8]{botbol2012castelnuovo} for a finer version of Corollary \ref{theorembettireg}.
\begin{example}
    We continue with Example~\ref{example2}.  In Figure \ref{fig:examplebetti}, we illustrate the region $\xreg(I) + (3,2)$ and the Betti numbers, i.e., the bidegrees $(a,b)$ such that there is $i \geq 1$ with $\beta_{i,(a,b)}(I) \neq 0$ in the minimal free resolution of $I$. Proposition~\ref{theorembetti} guarantees that there is no Betti number in the region $\xreg(I) + (3,2)$. \ccc{The shift in this example was calculated by bounding the cohomological dimension of $I$ with respect to $I$. This bound can be found by considering a Noether normalization of the $\mathbf{k}[x_0,\dots,x_n]$-module $(S/I)_{(*,0)}$. For more details, we refer to the third authors PhD thesis \cite[Section 5.4]{carlesthesis}. } 
     \begin{figure}
        \centering
\begin{tikzpicture}[thick,scale=0.4, every node/.style={transform shape}]

\fill[purple!40!white] (0.8,2.8) rectangle (1.2,3.2);

\fill[purple!40!white] (1.8,1.8) rectangle (2.2,2.2);

\fill[purple!40!white] (2.8,1.8) rectangle (3.2,2.2);

\fill[purple!40!white] (4.8,0.8) rectangle (5.2,1.2);

\fill[purple!40!white] (7.8,-0.2) rectangle (8.2,0.2);

\fill[purple!40!white] (4.8,1.8) rectangle (5.2,2.2);

\fill[purple!40!white] (6.8,0.8) rectangle (7.2,1.2);

\fill[purple!40!white] (7.8,0.8) rectangle (8.2,1.2);

\fill[purple!40!white] (4.8,2.8) rectangle (5.2,3.2);

\fill[purple!40!white] (2.8,2.8) rectangle (3.2,3.2);

\fill[purple!40!white] (6.8,1.8) rectangle (7.2,2.2);

\fill[purple!40!white] (7.8,0.8) rectangle (8.2,1.2);

\fill[purple!40!white] (6.8,2.8) rectangle (7.2,3.2);

\filldraw[red] (4,4) node[anchor=north] {};
\filldraw[red] (4,5) node[anchor=north] {};
\filldraw[red] (4,6) node[anchor=north] {};
\filldraw[black] (5,1) node[anchor=north] {\color{red} };
\filldraw[white] (5,4) circle (2pt)  node[anchor=north] {\color{red} };
\filldraw[red] (5,5) node[anchor=north] {};
\filldraw[red] (5,6) node[anchor=north] {};
\filldraw[black] (6,1)node[anchor=north] {\color{red} };
\filldraw[red] (6,2) node[anchor=north] {};
\filldraw[red] (6,3) node[anchor=north] {};
\filldraw[red] (6,4) node[anchor=north] {};
\filldraw[red] (6,5) node[anchor=north] {};

\draw[line width = 0.5mm,opacity=0.75,color=olive,-] (10,0) -- (10,4) -- (8,4) -- (8,5.) -- (5,5) -- (5,7) -- (4,7.0) -- (4.,8.5) node[anchor=south east] {};

\draw[thick,->] (0,0) -- (0,8.5) node[anchor=south east] {};
\foreach \x in {,1,2,3,4,5,6,7,8,9,10}
 \draw (\x,1pt) -- (\x,-1pt) node[anchor=north] {$\x$};

\draw[thick,->] (0,0) -- (10.5,0) node[anchor=south east] {};
 
\foreach \y in {0,1,2,3,4,5,6,7,8}
   \draw (1pt,\y cm) -- (-1pt,\y cm) node[anchor=east] {$\y$};

\fill[color=olive, opacity=0.2]  (4,7) rectangle (10.5,8.5);

\fill[color=olive, opacity=0.2]  (5,5) rectangle (10.5,7);

\fill[color=olive, opacity=0.2]  (8,4) rectangle (10.5,5);

\fill[color=olive, opacity=0.2]  (10,0) rectangle (10.5,4);

\end{tikzpicture}
        \caption{We illustrate in \textcolor{olive}{olive} the region \textcolor{olive}{$\xreg(I) + (3,2)$} and in \textcolor{pink}{pink} squares, the Betti numbers, i.e. the bidegrees $(a,b)$ such that there is $i \geq 1$ with $\beta_{i,(a,b)}(I) \neq 0$. }
        \label{fig:examplebetti}
    \end{figure}
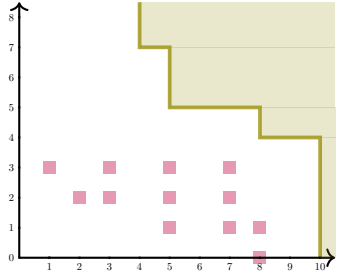
\end{example}

\section{Final remarks and open questions} \label{sec:final}

The definition of the partial regularity region $\xreg(I)$ allowed us to (\ccc{partially}) generalize the criterion of Bayer and Stillman in \cite{bayer_criterion_1987} to the case of the bigeneric initial ideal $\bigin(I)$, using the $\DRL$ monomial order Eq.~\eqref{eqmono}. This region relies solely on the local cohomology modules of $I$ with respect to $\mx$ and not on the properties of $\bigin(I)$. However, as we observed in Figure \ref{fig:examplebrucexreg}, there are unbounded regions that do not intersect $\xreg(I)$. Therefore, providing a finer bounding region to describe the bidegrees of $\bigin(I)$ remains an open problem. 

\ccc{The following result characterizes the bidegrees of the generators of $\bigin(I)$ in terms of properties of colon ideals with respect to $\bigin(I)$. Its proof is very similar to the proof of Theorem \ref{theoremexistenceelementsxreg}, but we include it for completeness.}


\begin{proposition}
\label{openproblem}
  Let $I \subset S$ be a bihomogeneous ideal and $(a,b) \in \mathbb{Z}^2_{\succsim 0}$. Then, there is no minimal generator of $\bigin(I)$ of bidegree $(a,b)$, if and only if,
  \begin{equation}
    \label{openproblem_eq}
      (J_{k-1}:x_k)_{(a - 1,b)} = (J_{k-1} + \my(J_{k-1}:x_k))_{(a-1,b)} \ \ \textrm{ for all } k = 0,\dots,n,
\end{equation}
where $J_{k-1} = (\bigin(I),x_0,\dots,x_{k-1})$.
\end{proposition}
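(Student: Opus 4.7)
The plan is to prove the two implications separately. For the implication $(\Leftarrow)$, the key observation is that $\bigin(I)$ is a monomial ideal, so $\ini(\bigin(I)) = \bigin(I)$ and $\ini(J_{k-1}) = J_{k-1}$ for every $k$. Applying Corollary \ref{bigtheorem} with $I$ replaced by $\bigin(I)$, the hypothesis \eqref{equation-5-2} becomes exactly \eqref{openproblem_eq}, and the conclusion gives that $\bigin(I)$ has no minimal generator of bidegree $(a,b)$.

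For the converse $(\Rightarrow)$, I would argue by contraposition: assuming that \eqref{openproblem_eq} fails for some $k \in \{0,\dots,n\}$, I would construct a minimal generator of $\bigin(I)$ of bidegree $(a,b)$. Since every ideal in sight is monomial, the failure of the equality provides a monomial $x^\alpha y^\beta$ of bidegree $(a-1,b)$ lying in $(J_{k-1}:x_k)$ but neither in $J_{k-1}$ nor in $\my(J_{k-1}:x_k)$. In particular, $x^\alpha y^\beta$ is not divisible by any of $x_0,\dots,x_{k-1}$, while $x_k x^\alpha y^\beta \in J_{k-1}$; together these two facts imply $x_k x^\alpha y^\beta \in \bigin(I)$, providing a candidate minimal generator of bidegree $(a,b)$.

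The core step is to verify that $x_k x^\alpha y^\beta$ is indeed a minimal generator of $\bigin(I)$, which I would do by contradiction. Suppose that $x_k x^\alpha y^\beta = \bar z \cdot z^\gamma$ with $\bar z \neq 1$ and $z^\gamma \in \bigin(I)$ a proper monomial divisor, and perform a case split according to which variable divides $\bar z$. If $x_k \mid \bar z$, then $x^\alpha y^\beta$ itself lies in $\bigin(I)\subseteq J_{k-1}$, a contradiction. Otherwise $x_k \mid z^\gamma$, and we write $z^\gamma = x_k z^{\gamma'}$, so that $x^\alpha y^\beta = \bar z \, z^{\gamma'}$. If $x_j \mid \bar z$ for some $j<k$, then $x_j$ divides $x^\alpha y^\beta$, again a contradiction. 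If $x_j \mid \bar z$ for some $j>k$, Lemma \ref{lemmabiborel} applied to $x_k z^{\gamma'} \in \bigin(I)$ yields $x_j z^{\gamma'} \in \bigin(I)$, so $x^\alpha y^\beta = (\bar z/x_j)(x_j z^{\gamma'}) \in \bigin(I)\subseteq J_{k-1}$, a contradiction. Finally, if $y_j \mid \bar z$, then $z^{\gamma'} \in (\bigin(I):x_k)\subseteq (J_{k-1}:x_k)$, and so $x^\alpha y^\beta = (\bar z/y_j)(y_j z^{\gamma'}) \in \my(J_{k-1}:x_k)$, contradicting the choice of $x^\alpha y^\beta$.

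The main obstacle I foresee is making the case analysis fully rigorous, in particular ensuring that the application of the bi-Borel property in Lemma \ref{lemmabiborel} correctly handles the swap between $x_k$ and $x_j$ with $j>k$, and that the four cases cover every possible factorization $\bar z \, z^\gamma$. This step is precisely where the specific monomial structure of bigeneric initial ideals enters in a crucial way, and is what distinguishes the converse direction from the direct application of Corollary \ref{bigtheorem} used for $(\Leftarrow)$.
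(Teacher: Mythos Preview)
Your proposal is correct and follows essentially the same route as the paper: the paper proves the $(\Rightarrow)$ direction by the very same contrapositive case analysis on a variable dividing $\bar z$ (using Lemma \ref{lemmabiborel} for the $j>k$ swap), and leaves the converse to the reader, for which your appeal to Corollary \ref{bigtheorem} applied to the monomial ideal $\bigin(I)$ is the natural argument. One minor caveat: Corollary \ref{bigtheorem} is stated for $a>1$, so the boundary case $a=1$ strictly speaking needs a separate (easy) check.
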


\ccc{
\begin{proof}
If there is $0 \leq k \leq n$ such that
$$(J_{k-1}:x_k)_{(a - 1,b)} \neq (J_{k-1} + \my(J_{k-1}:x_k))_{(a-1,b)},$$
then there is a monomial $x^{\alpha}y^{\beta} \notin (J_{k-1} + \my(J_{k-1}:x_k))_{(a-1,b)}$ such that $x_kx^{\alpha}y^{\beta} \in (\lb{J_{k-1}})_{(a,b)}$. If \lb{$x_kx^{\alpha}y^{\beta}$} is not a minimal generator of $\bigin(I)$, then we write it as $x_kx^{\alpha}y^{\beta} = \overline{z}z^{\gamma}$ for some minimal generator $z^{\gamma} \in \bigin(I)$. \lb{If} $\overline{z} \neq 1$, \lb{we distinguish four different cases}: 
\begin{itemize}
   \item[-] If there is $0 \leq k' \leq k-1$ such that $x_{k'}$ divides $\overline{z}$, we derive that $x^{\alpha}y^{\beta} \in (x_{k'}) \subset J_{k-1}$, leading to \lb{a} contradiction.
   \item[-] If $x_k$ divides $\overline{z}$, \lb{then} $x^{\alpha}y^{\beta} = \frac{\overline{z}}{x_k}z^{\gamma} \in \bigin(I) \subset J_{k-1}$, \lb{which gives again a contradiction}.
   \item[-] If there is $k' > k$ such that $x_{k'}$ divides $\overline{z}$, then, by the properties of $\bigin(I)$ we derive first that $z^{\gamma} = x_{k}z^{\gamma'}$ and then, \lb{using Lemma \ref{lemmabiborel}, that $x_{k'}z^{\gamma'} \in \bigin(I)$}. \lb{It follows that}  $x^{\alpha}y^{\beta} = x_{k'}\frac{\overline{z}}{x_{k'}}z^{\gamma'} \in \bigin(I)$, \lb{again a contradiction}.
   \item[-] \lb{Finally,} if $y_{k'}$ divides $\overline{z}$ for some $0 \leq k' \leq m$, \lb{then} $x_kx^{\alpha}y^{\beta} \in \mathfrak{m}_y(J_{k-1}:x_{k})$, \lb{which gives again a contradiction}.
\end{itemize}
\lb{Therefore, we deduce that} $x_kx^{\alpha}y^{\beta}$ must be a minimal generator of $\bigin(I)$, \lb{which proves that if there is no mimimal generator of $\bigin(I)$ of bidegree $(a,b)$ then the conditions \eqref{openproblem_eq} hold}. \lb{The converse implication can be derived with similar arguments; we leave the details to the reader.}
\end{proof}
}
 The main remaining challenge is to be able to characterize the left hand side of Eq.~\eqref{openproblem_eq} in terms of the algebraic properties of $I$ (local cohomology, Betti numbers...) by providing a tight bounding region for the description of the bidegrees of the generators of $\bigin(I)$.

\medskip

\ccc{We \lb{expect that} the results we obtained in the bihomogeneous setting will also  hold in the multihomogeneous case, i.e. over $\mathbb{P}^{n_1} \times \dots \times \mathbb{P}^{n_r}$ and using the multihomogeneous analogue of the bigeneric initial ideal ($  \multigin(I)$). \lb{Indeed,} in each step of our proofs we only used $i)$ the local cohomology with respect to one group of variables, $ii)$ the monomial order in Eq.~\eqref{eqmono}, for which this group of variables is the smallest and $iii)$ the results in \cite{chardinholanda} which are \lb{valid} in the multihomogeneous case. However, as $\xreg(I)$ only depends on one group of variables, the \textit{looseness} of the corresponding bounding region for the multidegrees of the minimal generators of $\multigin(I)$ would increase with a higher number of factors involved. Therefore, further work is required in order to provide sharp bounding regions in the multihomogeneous \lb{setting} too.
}

\ccc{\lb{Finally,} another interesting open problem we can mention, related to complexity aspects, is \lb{the generalization of} the doubly exponential bounds known in the single graded case \cite{galligo74, caviglia_sbarra_2005} to the multigraded setting.}




\bibliographystyle{siamplain}
\bibliography{cm}

\end{document}